\documentclass[11pt]{article}
\usepackage{graphicx}
\usepackage{amsmath}
\usepackage{amssymb}
\usepackage{amsfonts}
\usepackage{amsthm}
\usepackage[left=2.1cm,top=1.5cm,right=2.1cm, bottom=2.2cm,letterpaper]{geometry}
\usepackage{latexsym}
\usepackage{tcolorbox}
\usepackage{hyperref}
\usepackage{url}
\usepackage{enumitem}
\usepackage[toc]{appendix}
\usepackage[export]{adjustbox}
\usepackage[final]{showkeys}
\usepackage{mathtools}
\mathtoolsset{showonlyrefs}

\hypersetup{
	colorlinks=true,
	linkcolor=red,
	filecolor=magenta,
	urlcolor=cyan,
}

\newtheorem{theorem}{Theorem}[section]
\theoremstyle{plain}

\newtheorem{corollary}[theorem]{Corollary}
\newtheorem{lemma}[theorem]{Lemma}

\newtheorem{problem}{Open Problem}
\newtheorem{proposition}[theorem]{Proposition}
\theoremstyle{definition}
\newtheorem{definition}[theorem]{Definition}
\newtheorem{example}[theorem]{Example}
\newtheorem{remark}[theorem]{Remark}
\numberwithin{equation}{section}

\newcommand{\R}{\mathbb{R}}

\newcommand{\N}{\mathbb{N}}

\newcommand{\B}{{\bf B}}

\renewcommand{\phi}{\varphi}

\begin{document}

\title{A unified approach to the divergence equation\\
and related functional inequalities}

\author{Filippo Gazzola -- Hans-Christoph Grunau -- Gianmarco Sperone}

\date{}

\maketitle

\begin{abstract}
The huge amount of literature about the divergence equation in bounded Lipschitz domains of $\R^n$ ($n\ge2$) is fairly disconnected and even apparently
simple problems remain unsolved. We go several steps further in the knowledge of this equation and of some related inequalities.
We prove that among its infinitely many solutions there exists a special one obeying  elliptic regularity theory. This solution simplifies
the definition of the Bogovskii constant $C_B$ and allows us to prove its attainment in smooth domains. We then obtain a universal lower bound for
$C_B$ in any Lipschitz domain as well as a non-minimality criterion. As expected, balls are minimisers as the domain varies, although
no symmetrisation technique is used. We also analyse ellipsoids and annuli: for the first we improve the (so far) best
asymptotic inequality for thinning domains. Finally, we introduce higher-order Bogovskii constants, which lead to a polyharmonic Stokes problem. Not only regularity
theory applies without smoothness of the domain
when the source has some vanishing traces, but we also prove that balls are again minimisers among Lipschitz domains with the very same Bogovskii
constant. Three main challenging open problems are suggested.\par\noindent
{\bf Mathematics Subject Classification:} 35F05, 35J58, 47A75, 76D07.\par\noindent
{\bf Keywords and phrases:} Bogovskii constant, Schur complement, Cosserat eigenvalues, Ne\v{c}as inequality, higher-order Stokes equations,
optimality in functional inequalities.
\end{abstract}

{
	\hypersetup{linkcolor=black}
	\tableofcontents
}

\eject

\section{Introduction} \label{intro}

Which is the simplest boundary-value problem for a linear ODE on a bounded interval $(a,b)\subset\R$? We believe it is
$u'(x)=f(x)$ with $u(a)=u(b)=0$. This problem is solvable if and only if $f$ has zero mean over $(a,b)$ and, in this case,
the solution is unique and explicitly given by $u(x)=\int_{a}^{x}f(s)ds$.\par
Moving to higher space dimensions $n\ge2$, with $\Omega \subset \mathbb{R}^{n}$ being a bounded domain, this becomes a celebrated problem in
Fluid Mechanics, namely to find a vector field $u\in H^1_0(\Omega, \mathbb{R}^{n})$ such that
\begin{equation}\label{Bogdom}
\nabla\cdot u=f \quad \mbox{in} \quad \Omega\, ,\qquad u=0 \quad \mbox{on} \quad \partial\Omega
\end{equation}
for a given $f\in L^2_0(\Omega, \mathbb{R})$ (zero mean): the existence of a solution to \eqref{Bogdom} is by far nontrivial,
see e.g.\ \cite[Theorem III.3.1]{galdi2011introduction}. If $\overline{u}$ solves \eqref{Bogdom}, then taking
\begin{equation}\label{phiPhi}
\varphi\in \mathcal{C}^2_c(\Omega, \mathbb{R})\, ,\qquad \Phi=(\varphi_{x_2}, -\varphi_{x_1},0, \ldots,0)^{T}\in \mathcal{C}^1_c(\Omega, \mathbb{R}^{n})\, ,
\end{equation}
one has that $\nabla\cdot\Phi=0$ and $\overline{u}+\Phi$ also solves \eqref{Bogdom}. Clearly, one can ``play'' with other partial derivatives
of $\varphi$ or, in dimension $n=3$, simply take $\Phi=$curl$(\psi)$ for any $\psi\in \mathcal{C}^1_c(\Omega, \mathbb{R}^{3})$. By using this trick, one can construct a sequence
$\{u_k\}\subset H^1_0(\Omega, \mathbb{R}^{n})$ of solutions to \eqref{Bogdom} such that $\|\nabla u_k\|_{L^2(\Omega)}\to\infty$ as $k\to\infty$, whatever
$f\in L^2_0(\Omega, \mathbb{R})$ is (including $f\equiv0$!). Therefore, uniqueness and a priori bounds for solutions to \eqref{Bogdom} are
a delicate matter. In view of the twisty story retraced at the end of this section, we will refer to \eqref{Bogdom} as the
{\bf divergence equation}. Moreover, following \cite[(2.3)]{costabel} (see also \cite{gazspefra}), the ``optimal constant'' for a priori bounds of solutions to \eqref{Bogdom} is characterised by
\begin{equation}\label{bogo11}
C_B(\Omega) \doteq   \sup_ {f\in L^2_0(\Omega, \mathbb{R}) \atop \|f\|_{L^2(\Omega)}=1}\quad\inf\Big\{\|\nabla v\|_{L^2(\Omega)}^2\, ;\ v\in H^1_0(\Omega, \mathbb{R}^{n})\, ,
\ \nabla\cdot v=f\text{ in }\Omega\Big\}\, ,
\end{equation}
with $C_B(\Omega) < \infty$ for every bounded Lipschitz domain $\Omega \subset \mathbb{R}^n$, see again
\cite[Theorem III.3.1]{galdi2011introduction}. In spite of earlier contributions in different fields, following the Fluid Mechanics community,
we will refer to \eqref{bogo11} as the {\bf Bogovskii constant}.\par
When we got interested in \eqref{Bogdom} we were  surprised, on the one hand of the huge number of different mathematical groups
working on it and, on the other hand, on the lack of interactions between some of them. The bibliographies of the related papers are not exhaustive and often
relevant previous works are not mentioned. Below we try to reconstruct the history of this mysterious working parallel to each other.
It is our main purpose to connect all together
these contributions, to complement them with new results, to analyse in depth \eqref{Bogdom} and \eqref{bogo11},
and to suggest a unified approach to their analysis.\par
Unless otherwise stated, throughout the paper $\Omega\subset\R^n$ ($n\ge2$) is a bounded Lipschitz domain. As customary for PDEs,
the first step is to set up the problem in some functional-analytic framework and to analyse possible invariances of the equation:
this is done in Section \ref{sec:basics}. In Theorem \ref{minBog} we prove the existence of a ``privileged'' solution to \eqref{Bogdom}:
not only it achieves the (inner) infimum in \eqref{bogo11} and is then subject to a priori bounds, but it also benefits of elliptic
regularity theory, although \eqref{Bogdom} is not elliptic. In fact, this privileged solution is a weak solution to a generalised Stokes system in $\Omega$, see \eqref{gstokes}.

For general $\Omega\subset\R^n$, to find the exact value of
$C_B (\Omega)$ appears to be a challenging task. In Theorem \ref{lowerbound} we prove the sharp universal lower bound $C_B (\Omega)\ge n$
and we provide a criterion for $\Omega$ not to reach this value: this criterion is applied in Section \ref{applications}.\par
In Section \ref{VEL}, we set the analysis in different functional settings such as the Velte-Crouzeix spaces: the
lack of connections between different communities somehow forces us to link together several results related to the \textit{Schur complement
of the Stokes operator}, to the {\em Cosserat eigenvalue problem}, and to the {\em Ne\v{c}as inequality}. We summarise these connections
in Propositions \ref{lem:comm_1.6} and \ref{prop:comm_1.3}. These results are complemented with Theorem \ref{bogoatttheo}, where we
prove that also the (outer) supremum in \eqref{bogo11} is attained in {\em smooth} domains $\Omega$; for general Lipschitz domains, see Open Problem \ref{smooth_to_Lip}.\par
Section \ref{sec:4_1} is devoted to the analysis of \eqref{Bogdom} in balls: by combining Theorem \ref{lowerbound} with earlier contributions leading to \eqref{balln}, we prove that balls minimise the
Bogovskii constant among bounded Lipschitz domains; we also report the proof of \eqref{balln} by describing the whole spectrum of the Cosserat eigenvalue problem.
The natural question is then whether balls are the only minimisers of the Bogovskii constant, see Open Problem \ref{shapeopt}.
In the Appendix (Section \ref{sec:examples}) we complement the results in balls by showing that the privileged solution can be obtained from any solution by applying a projection operator.
Its nonlocal and symmetry breaking behaviour is illustrated by two explicit examples.\par
We then analyse in detail the cases of ellipsoids (Section \ref{sec:ellipsoids}) and annuli (Section \ref{sec:annuli}). In ellipsoids,
we determine explicitly the privileged solution to \eqref{Bogdom} when $f$ belongs to a suitable $n$-dimensional space and
we find upper/lower bounds for $C_B$ (depending on the axes) more precise than those available in literature, see \eqref{CBE} and Open Problem \ref{star}:
as expected, we show that no proper ellipsoid attains $C_B=n$. Also in annuli we are able to prove such result which, perhaps, is less expected,
see Theorem \ref{thm:comm_4.1} where we also show that thinning annulli have a diverging Bogovskii constant in {\em any} dimension $n\ge2$
(this was known only for $n=2$ with a fairly complicated proof \cite{Chizhonkov_Olshanskii_2000}). Our proof also reveals connections with
the (planar) {\em Zhukovsky transform} \cite[Chapter 4]{acheson1990elementary} and with the Friedrichs inequality \cite{friedrichs1937certain}.\par
In Section \ref{polyBogo} we introduce a {\em higher-order version} of the Bogovskii constant, see $C_{B}^{(m)}(\Omega)$ in \eqref{bogok}.
In Theorem \ref{minBoghigh} we prove that, also within this framework, among infinitely many solutions to \eqref{Bogdom} there exists
a privileged one, different from that for \eqref{bogo11}, which achieves the (inner) infimum in \eqref{bogok}, is subject to a priori bounds,
and benefits of elliptic regularity theory, as it solves a \textit{higher-order generalised Stokes system} in $\Omega$, see \eqref{gstokesm} and previous work in \cite{amrouche1992problemes}. In Theorem \ref{lowerboundHO} we prove the sharp universal lower bound
$C_{B}^{(m)}(\Omega)\ge n$ (the same as in Theorem \ref{lowerbound}!) and a criterion for $\Omega$ not to reach this value.
In Proposition \ref{lem:comm_1.6ho} we prove the {\em higher-order version of the Ne\v{c}as inequality} and,
after introducing the \textit{higher-order Schur complement of the Stokes operator}, in Theorem \ref{bogoatttheoho} we show that the
(outer) supremum in \eqref{bogok} is attained on smooth domains $\Omega$. Theorem \ref{finalresult} states that, in any ball, also the higher-order
Bogovskii constant $C_{B}^{(m)}$ in \eqref{bogok} equals the lower bound $n$. While the proof of \eqref{balln} is obtained by exploiting the knowledge
of the Cosserat spectrum, our proof of Theorem \ref{finalresult} is obtained after determining explicitly the Schur complement of the polyharmonic Stokes problem.
\par\medskip\noindent
\textbf{Historical facts and bibliographical notes.}

In a seminal paper, Friedrichs \cite{friedrichs1937certain} proved an inequality between the $L^2$-norm of conjugate harmonic functions (with
zero mean) in planar bounded domains. Ten years later \cite{friedrichs}, he connected his result with the Korn inequality \cite{korn} in elasticity, see also \cite{horganzamp}.
The Friedrichs inequality has natural extensions to
higher space dimensions. In 3D, pairs of conjugate harmonic functions were replaced by Velte \cite[(1)]{velte1998inequalities} with
pairs of vector fields satisfying the so-called Moisil-Teodorescu equations. In {\em any} space dimension $n\ge2$, a generalised Friedrichs
inequality was obtained by Costabel \cite[(1.10)]{costabel2017inequalities} who introduced suitable differential forms.\par
The first a priori bound for \eqref{Bogdom} is due to Cattabriga \cite{cattabriga1961problema} in 3D and for $f\in L^p_0(\Omega,\R)$
($1<p<\infty$), assuming that $u$ also solves a generalised Stokes problem. Several years later, Babu\v{s}ka \& Aziz \cite{babuska} found a priori bounds
for \eqref{Bogdom} in planar domains for numerical purposes, see also subsequent results by Ladyzhenskaya \& Solonnikov
\cite{ladyzhenskaya1978some}. Related to a result by Pileckas \cite{pileckas} is the work by Bogovskii \cite{bogovskii1979solution,bogovskii1980}:
based on the explicit representation formula in star-shaped domains due to Sobolev \cite[Formula (7.9)]{sobolev1963}, he proved existence and a priori bounds for a solution to \eqref{Bogdom} in the scale of Sobolev spaces of positive order.
Without giving references, a priori bounds for \eqref{Bogdom} are named after Ladyzhenskaya-Babu\v{s}ka-Brezzi (LBB) in (e.g.)
\cite{Bernardi,Chizhonkov_Olshanskii_2000}.

In a celebrated paper, Horgan \& Payne \cite{horgan} were able to prove the following simple formula connecting the optimal  Friedrichs (or Korn)
constant $\Gamma(\Omega)$ with $C_B(\Omega)$
\begin{equation}\label{connection}
	C_B(\Omega)=\Gamma(\Omega)+1\, ;
\end{equation}
they name $C_B(\Omega)$ the \textit{Babu\v{s}ka-Aziz constant} because they only consider 2D simply connected domains $\Omega$ as in \cite{babuska}.
By using the contributions in \cite{costabel2017inequalities,velte1998inequalities}, \eqref{connection} has been discussed in higher
space dimensions and for more general domains in \cite{acosta2017divergence,costabel,Costa-McIntosh,duran2012elementary,geissert2006equation,Guzman,ladyzhenskaya1978some,Payne2007,zsuppan2016,zsuppan2018,zsuppan2020}.\par

The functional setting that we use in Section \ref{VEL} is an extension, to any dimension $n\ge2$, of the one employed by Velte \cite{Velte_LNM1431_1990}, although the author himself writes that
{\em our considerations rest upon results which are known (or nearly known)}, see \cite[p.159]{Velte_LNM1431_1990}.
Our Proposition \ref{lem:comm_1.6} follows the lines of \cite[Proposition 1'']{Velte_LNM1431_1990} (written only for $n=3$)
and the variational characterisation \eqref{eq:comm_1.3} is related to the general Ne\v{c}as inequality
\cite{Necas_ineq}, see also \cite[Lemme 3.7.1]{nevcasmethodes}, \cite[Exercise III.3.4]{galdi2011introduction} and earlier results by Cattabriga \cite[pp.312-313]{cattabriga1961problema}.
In fact, \eqref{eq:comm_1.3} may be seen as a ``dual characterisation'' of $C_B$, in the spirit of Fichera \cite{fichera}.
Proposition \ref{lem:comm_1.6} also contains the proof of \cite[Theorem 1]{Velte_LNM1431_1990} which, as mentioned there,
was probably already known, although no written proof seems to exist in literature. It was orally exposed by Velte during an Obwerwolfach
meeting in 1998 and Sohr commented that he was already aware of it, see \cite[Remark, p.162]{Velte_LNM1431_1990}.\par
Finally, let us try to retrace the history of \eqref{balln}. It is proved, e.g., in  \cite[Section 6]{horgan} for $n=2$ and in
\cite[Theorem 2]{Velte_LNM1431_1990} for $n=3$. Moreover, there are indications in the literature that the general statement had been known
for some time. In a talk given in Budapest in March 2016, Zsupp\'an \cite[p.4]{zsuppanslides} attempts to reconstruct the history of
\eqref{Bogdom} overlooking, however, part of the vast literature; in particular, on \cite[p.14]{zsuppanslides} he attributes to Cosserat
\& Cosserat \cite{Cosserat_18986} the first proof of \eqref{balln} but we were not able to find the explicit proof therein.
In the notes on her web page, Dauge \cite[p.2]{Daugeweb} claims that it was Crouzeix \cite{Crouzeix_1997} who first proved \eqref{balln}.
This paper is also mentioned on \cite[p.4]{zsuppanslides} but is not easy to find on the web: Crouzeix himself calls it
``an article introuvable'' (an unobtainable paper), which is downloadable from his webpage. It is indeed true that \eqref{balln} is proved in \cite{Crouzeix_1997}, but... it is very hidden. The paper by Crouzeix \cite{Crouzeix_1997} starts by clarifying that the considered domains are
only 2D and 3D, which is repeated in the titles of Sections 3 and 4. However, at the bottom of p.5, Crouzeix manages to prove \eqref{balln}
without writing this explicitly. His proof is obtained as a consequence of results in ellipsoids which, in turn, are related to much
earlier results by the Cosserat brothers \cite{Cosserat_18987,Cosserat_18986} and Mihlin \cite{Mihlin} in elasticity. We are then back to
the staring point! Having these results in mind (see also \cite{Simader_vonWahl_2006}) one may deduce \eqref{balln} directly from
\cite[Theorem 2]{Velte_LNM1431_1990}, which is again formulated only for $n=3$. Moreover, in view of Proposition \ref{lem:comm_1.6}, a proof
of \eqref{balln} is also given in \cite[Corollary 6]{gaultier1996spectral} (using the Schur complement) where the authors do not cite previous works, neither
\cite{Cosserat_18986} nor \cite{Velte_LNM1431_1990}, although in parts their reasoning is pretty much along those lines.\par\bigskip

\section{The Bogovskii constant}\label{sec:varkiational}

\subsection{Preliminaries}\label{sec:basics}

We collect here some results and remarks which will prove to be useful in what follows.\par
We begin with a simple observation that has several connections with Section \ref{VEL}.
A refinement of the celebrated Helmholtz-Weyl decomposition (see, e.g., \cite[Theorem I.1.5]{temam} or \cite[Theorem~III.1.1]{galdi2011introduction}) states that the spaces
\begin{align}
\mathbb{G}_1 &\doteq \{w\in L^2(\Omega,\R^n);\ \nabla\cdot w=0,\ \gamma_\nu w=0\}\, ,\\
\mathbb{G}_2 &\doteq \{w\in L^2(\Omega,\R^n);\ \nabla\cdot w=0,\ \exists g\in H^1(\Omega,\R),\ w=\nabla g\}\, , \label{scomp}\\
\mathbb{G}_3 &\doteq \{w\in L^2(\Omega,\R^n);\ \exists g\in H^1_0(\Omega,\R),\ w=\nabla g\}\, ,
\end{align}
are mutually orthogonal in $L^2(\Omega,\R^n)$ and $L^2(\Omega,\R^n)=\mathbb{G}_1\oplus\mathbb{G}_2\oplus\mathbb{G}_3$;
in \eqref{scomp} $\gamma_\nu$ is the normal trace operator.
This means that every vector field $w\in L^2(\Omega,\R^n)$ can be uniquely written as the sum of three vectors $w^i\in\mathbb{G}_i$ for $i=1,2,3$.
As mentioned in the introduction, the problem \eqref{Bogdom} admits infinitely many solutions for any $f\in L^2_0(\Omega,\R)$.
For any such solution $u\in H^1_0(\Omega,\R^n)\subset L^2(\Omega,\R^n)$, let $u=u^1+u^2+u^3$ be its orthogonal decomposition in the spaces appearing in \eqref{scomp}. Then
$$f=\nabla\cdot u=\nabla\cdot u^3\, .$$
Since $u^3\in\mathbb{G}_3$, there exists $g\in H^1_0(\Omega,\R)$ such that $u^3=\nabla g$. Inserted into the previous equation,
this shows that $g$ weakly solves the problem $\Delta g=f$ in $\Omega$.
Then, $g$ is uniquely determined and so is $u^3=\nabla g$. Therefore,
\begin{equation}\label{G3}
\mbox{although \eqref{Bogdom} admits infinitely many solutions, all of them have the same projection over $\mathbb{G}_3$.}
\end{equation}

The next statement says that the inner infimum in \eqref{bogo11} is always bounded from below by $1$. On the contrary, the Bogovskii constant can be arbitrarily large
in ``thin'' domains, see Theorem~\ref{ellipsoid} and Remark \ref{rem:thin_rectangles} below.

\begin{lemma}\label{lem:general_lower_bound}
For any $u\in H^1_0(\Omega, \mathbb{R}^{n})$ we have
	\begin{equation}\label{lowBog}
	\| \nabla \cdot u\|_{L^2(\Omega )}\le \| \nabla u\|_{L^2(\Omega )}.
	\end{equation}
\end{lemma}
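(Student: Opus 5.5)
The plan is to prove \eqref{lowBog} first for smooth compactly supported fields and then pass to the limit by density. Take $u\in\mathcal{C}^\infty_c(\Omega,\R^n)$ and integrate by parts twice, with no boundary terms because $u$ has compact support. This gives the classical identity
\begin{equation}
\int_\Omega |\nabla u|^2\,dx=\int_\Omega |\nabla\cdot u|^2\,dx+\int_\Omega \sum_{i<j}\big(\partial_{x_i}u_j-\partial_{x_j}u_i\big)^2\,dx .
\end{equation}
To see this, expand the curl term:
\begin{equation}
\sum_{i<j}\big(\partial_i u_j-\partial_j u_i\big)^2=|\nabla u|^2-\sum_{i,j}\partial_i u_j\,\partial_j u_i .
\end{equation}
Then use
\begin{equation}
\int_\Omega \partial_i u_j\,\partial_j u_i\,dx=-\int_\Omega u_j\,\partial_i\partial_j u_i\,dx=\int_\Omega \partial_j u_j\,\partial_i u_i\,dx ,
\end{equation}
so that, after summing over $i,j$, the mixed term equals $\|\nabla\cdot u\|_{L^2(\Omega)}^2$. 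Since the curl term is nonnegative, this already yields \eqref{lowBog} for smooth compactly supported fields.

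Next, $\mathcal{C}^\infty_c(\Omega,\R^n)$ is dense in $H^1_0(\Omega,\R^n)$ by definition. Both sides of \eqref{lowBog} are continuous with respect to the $H^1$ norm, because $\|\nabla\cdot u\|_{L^2}\le\sqrt n\,\|\nabla u\|_{L^2}$ trivially. Hence the identity, and with it the inequality, extends to all $u\in H^1_0(\Omega,\R^n)$. No regularity of $\partial\Omega$ is used, so the Lipschitz assumption is not even needed here.

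The only step needing care is the double integration by parts, namely the commutation of second derivatives in the mixed term. This is harmless for $\mathcal{C}^\infty_c$ fields, which is precisely why I would argue first on smooth fields and only then use density, instead of manipulating $H^1_0$ functions directly.

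An alternative is Fourier-based. Extend $u$ by zero to $\R^n$; the extension stays in $H^1$. Plancherel turns the claim into the pointwise bound $|\xi\cdot\hat u(\xi)|^2\le|\xi|^2|\hat u(\xi)|^2$, which is Cauchy--Schwarz. Either route works; I would present the integration-by-parts identity, since the curl remainder also shows when equality can occur.
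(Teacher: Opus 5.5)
Your proof is correct and takes essentially the paper's route: density of $\mathcal{C}^\infty_c(\Omega,\R^n)$, the double integration by parts $\int_\Omega \partial_i u_j\,\partial_j u_i\,dx=\int_\Omega \partial_i u_i\,\partial_j u_j\,dx$, and then a pointwise comparison of $\nabla u:(\nabla u)^T$ with $|\nabla u|^2$. The only difference is at that last step, where the paper uses the Cauchy--Schwarz inequality and you use the exact identity $|\nabla u|^2-\nabla u:(\nabla u)^T=\tfrac12|\nabla u-(\nabla u)^T|^2$, which also gives directly the equality case ($\nabla u$ symmetric) that the paper later needs for the Cosserat eigenvalue $\lambda=1$.
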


\noindent
\begin{proof} By density, it suffices to consider $u\in \mathcal{C}^\infty_c(\Omega, \mathbb{R}^{n})$.
Integrating twice by parts and applying the Cauchy-Schwarz inequality yields:
\begin{align*}
\| \nabla \cdot u\|^2_{L^2(\Omega )}
=&\int_\Omega \left(\sum_{i=1}^n \partial_i u_{i}\right)^2 \, dx
=\sum_{i,j=1}^n \int_\Omega \left( \partial_i u_{i}\right) \, \left( \partial_j u_{j}\right)\, dx
=\sum_{i,j=1}^n \int_\Omega \left( \partial_i u_{j}\right) \, \left( \partial_j u_{i}\right)\, dx\\
\le&  \int_\Omega \left( \sum_{i,j=1}^n(\partial_i u_{j})^2\right)^{1/2} \, \left(\sum_{i,j=1}^n (\partial_j u_{i})^2\right)^{1/2}\, dx
= \int_\Omega |\nabla u |^2 \, dx= \| \nabla u\|^2_{L^2(\Omega )},
\end{align*}
and \eqref{lowBog} follows.	
\end{proof}

Next, we observe that the Bogovskii constant \eqref{bogo11} is invariant under translation, dilation and rotation of domains; therefore,
there is no monotonicity of $C_B$ with respect to domain inclusions. Otherwise $C_B(\Omega)$ would be the same for all $\Omega\subset \R^n$; in Section~\ref{symmBog} we shall show that this is not the case.

\begin{lemma}\label{invariance}
	For  any $h\in\R^n$, any $k>0$, and any $\mathcal{A}\in O(n)$, we have that
	$$
	C_B(\mathcal{A}(\Omega))=C_B(k\Omega)=C_B(\Omega+h)=C_B(\Omega)\, .
	$$
\end{lemma}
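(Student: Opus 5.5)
The plan is to handle all three transformations at once through affine maps $T(x)=k\mathcal{A}x+h$ with $k>0$, $\mathcal{A}\in O(n)$, $h\in\R^n$. Let $\tilde\Omega=T(\Omega)$. I will construct a bijection between admissible pairs $(f,v)$ on $\Omega$ and admissible pairs $(\tilde f,\tilde v)$ on $\tilde\Omega$. The ratio $\|\nabla v\|^2_{L^2}/\|f\|^2_{L^2}$ should be the same for corresponding pairs. Since \eqref{bogo11} is a sup over normalised $f$ of an inf over $v$, and both the constraint and the objective are homogeneous of degree two, it is convenient to rewrite it as
$$C_B(\Omega)=\sup_{f\in L^2_0(\Omega)\setminus\{0\}}\ \inf_{v\in H^1_0,\ \nabla\cdot v=f}\frac{\|\nabla v\|_{L^2(\Omega)}^2}{\|f\|_{L^2(\Omega)}^2}.$$
In this form it suffices to show that the transformation preserves admissibility and preserves the ratio exactly.

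For $f$ and $v$ on $\Omega$, I define
$$\tilde f(y)=f(T^{-1}y),\qquad \tilde v(y)=k\,\mathcal{A}\,v(T^{-1}y),\qquad y\in\tilde\Omega.$$
This is the Piola-type transformation adapted to $T$.

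1. The map $f\mapsto\tilde f$ is a linear bijection $L^2_0(\Omega)\to L^2_0(\tilde\Omega)$. The zero mean is preserved because the Jacobian $k^n$ is constant, and $\|\tilde f\|^2_{L^2(\tilde\Omega)}=k^n\|f\|^2_{L^2(\Omega)}$.

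2. The map $v\mapsto\tilde v$ is a linear bijection $H^1_0(\Omega,\R^n)\to H^1_0(\tilde\Omega,\R^n)$, since composition with a bi-Lipschitz affine map preserves $H^1_0$.

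3. By the chain rule, $\nabla\tilde v(y)=k\,\mathcal{A}\,\nabla v(x)\,(k\mathcal{A})^{-1}=\mathcal{A}\,\nabla v(x)\,\mathcal{A}^T$ with $x=T^{-1}y$. This matrix is similar to $\nabla v(x)$, so the traces agree and $\nabla\cdot\tilde v(y)=\nabla\cdot v(x)$. Hence $\nabla\cdot v=f$ holds if and only if $\nabla\cdot\tilde v=\tilde f$.

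4. Since $\mathcal{A}$ is orthogonal, the Frobenius norm is invariant under conjugation, so $|\nabla\tilde v(y)|=|\nabla v(x)|$. Therefore $\|\nabla\tilde v\|^2_{L^2(\tilde\Omega)}=k^n\|\nabla v\|^2_{L^2(\Omega)}$.

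Steps 1 and 4 produce the same factor $k^n$, which cancels in the ratio. Combined with the bijection of admissible sets from steps 2 and 3, the inner infimum for $\tilde f$ equals the inner infimum for $f$. Because $f\mapsto\tilde f$ is onto $L^2_0(\tilde\Omega)$, the outer suprema also coincide, giving $C_B(T(\Omega))=C_B(\Omega)$. Taking $T$ to be a pure rotation, a pure dilation, or a pure translation in turn yields the three stated equalities.

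The only delicate point is the choice of the factor $k\mathcal{A}$ in front of $v$. Without the factor $k$, the divergence would pick up a factor $k^{-1}$ and the map would not match the datum $\tilde f=f\circ T^{-1}$. Without the factor $\mathcal{A}$, the field would not transform covariantly, and the divergence would no longer be preserved under rotation. Once this scaling is fixed, the remaining steps are routine.
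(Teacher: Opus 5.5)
Your proof is correct and follows essentially the paper's approach: a change of variables under which the vector field transforms covariantly, so that both the divergence and $|\nabla u|$ are preserved. The paper writes out only the rotation case, via $u_{\mathcal{A}}(x)=\mathcal{A}^T u(\mathcal{A}x)$, and proves two inequalities; you instead treat rotation, dilation and translation together through $T(x)=k\mathcal{A}x+h$, letting the factor $k$ and the ratio form of $C_B$ absorb the dilation, and you obtain equality directly from the bijectivity of the correspondence.
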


\noindent
\begin{proof}We only prove the invariance under rotation. Let $\mathcal{A}\in O(n)$. For any $f\in L^2_0 (\mathcal{A}(\Omega), \mathbb{R})$ and $u\in H^1_0 (\mathcal{A}(\Omega),\R^n)$
	solving \eqref{Bogdom} in $\mathcal{A}(\Omega)$, we define
	$$
	f_\mathcal{A}\in L^2_0(\Omega,\R),\quad f_\mathcal{A}(x)\doteq f(\mathcal{A}x),\quad u_\mathcal{A}\in H^1_0 (\Omega,\R^n),\quad u_\mathcal{A}(x)\doteq \mathcal{A}^T \cdot u(\mathcal{A}x).
	$$
	Then, obviously $\|f_\mathcal{A} \|_{L^2(\Omega )}=\|f \|_{L^2(\mathcal{A}(\Omega) )}$, and further
	\begin{align*}
	\nabla u_\mathcal{A}(x) =& \mathcal{A}^T \cdot (\nabla u)(\mathcal{A}x)\cdot \mathcal{A},\\
	|\nabla u_\mathcal{A}(x)|^2=& \operatorname{tr} \left( \left( \mathcal{A}^T \cdot (\nabla u)(\mathcal{A}x)\cdot \mathcal{A}\right)^T\cdot \left(\mathcal{A}^T \cdot (\nabla u)(\mathcal{A}x)\cdot \mathcal{A}\right)\right)\\
	=& \operatorname{tr} \left(\mathcal{A}^T \cdot  (\nabla u)^T(\mathcal{A}x)\cdot(\nabla u)(\mathcal{A}x)\cdot \mathcal{A} \right)
	=\operatorname{tr} \left(\mathcal{A}^T \cdot \mathcal{A}\cdot  (\nabla u)^T(\mathcal{A}x)\cdot(\nabla u)(\mathcal{A}x) \right) \\
	=&\operatorname{tr} \left(  (\nabla u)^T(\mathcal{A}x)\cdot(\nabla u)(\mathcal{A}x) \right)=|(\nabla u)(\mathcal{A}x)|^2,\\
	\|\nabla u_\mathcal{A} \|_{L^2(\Omega )}=&\|\nabla u \|_{L^2(\mathcal{A}(\Omega) )}.
	\end{align*}
	Finally, we have a.e. in $\Omega$:
	\begin{align*}
	\nabla \cdot u_\mathcal{A} (x) =&\operatorname{tr} \left(\nabla  u_\mathcal{A} (x)\right)
	= \operatorname{tr} \left(  \mathcal{A}^T \cdot (\nabla u)(\mathcal{A}x)\cdot \mathcal{A}\right)
	= \operatorname{tr} \left(  \mathcal{A}^T \cdot \mathcal{A}\cdot (\nabla u)(\mathcal{A}x)\right)\\
	=&\operatorname{tr} \left(  (\nabla u)(\mathcal{A}x)\right)
	=(\nabla \cdot u)(\mathcal{A}x)=f(\mathcal{A}x) =f_\mathcal{A}(x),
	\end{align*}
	so that $f_\mathcal{A},u_\mathcal{A}$ solve \eqref{Bogdom} in $\Omega$.
	We conclude that $C_B(\mathcal{A}(\Omega))\ge C_B(\Omega)$. The same argument with $\mathcal{A}(\Omega)$ instead of $\Omega$ and $\mathcal{A}^T$ instead of $\mathcal{A}$ gives
	$C_B(\Omega)=C_B(\mathcal{A}^T(\mathcal{A}(\Omega)))\ge C_B(\mathcal{A}(\Omega))$, which is the reverse inequality. Equality then follows.	
\end{proof}

We conclude this section with a simple observation on divergence-free vector fields.

\begin{lemma}\label{lem:lemma1}
For any vector field
\begin{equation}\label{H10s}
v \in H^{1}_{0,\sigma}(\Omega,\R^n)\doteq\{w\in H^1_0(\Omega,\R^n);\, \nabla \cdot w=0\mbox{ in }\Omega\},
\end{equation}
any $j\in\{1,\ldots,n\}$ and any $g\in\mathcal{C}^1(\R,\R)$, one has
\begin{equation} \label{exlemma}
\int_\Omega g'(x_j )v_j(x)\, dx=0.
\end{equation}	
\end{lemma}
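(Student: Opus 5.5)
The plan is to read \eqref{exlemma} as the weak form of $\nabla\cdot v=0$ tested against a function of $x_j$ alone. Fix $j$ and $g\in\mathcal{C}^1(\R,\R)$, and put $G(x)\doteq g(x_j)$ for $x\in\R^n$. Then $G\in\mathcal{C}^1(\R^n,\R)$ and $\nabla G(x)=g'(x_j)\,e_j$, where $e_j$ is the $j$-th canonical basis vector. Consequently $g'(x_j)v_j(x)=v(x)\cdot\nabla G(x)$, and the claim becomes
$$\int_\Omega v\cdot\nabla G\,dx=0 .$$

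First I would take $v\in\mathcal{C}^\infty_c(\Omega,\R^n)$. Since $v$ has compact support in $\Omega$, integration by parts produces no boundary term:
$$\int_\Omega v\cdot\nabla G\,dx=-\int_\Omega (\nabla\cdot v)\,G\,dx .$$
For general $v\in H^1_{0,\sigma}(\Omega,\R^n)$ I would take $v_k\in\mathcal{C}^\infty_c(\Omega,\R^n)$ with $v_k\to v$ in $H^1(\Omega)$. I deliberately do not require the $v_k$ to be divergence-free, because density of solenoidal test fields would need extra justification. Both sides of the identity pass to the limit. On the left, $\nabla G$ is bounded on $\overline\Omega$, since $\Omega$ is bounded and $g'$ is continuous on the compact interval given by the projection of $\overline\Omega$ onto the $x_j$-axis. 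On the right, $\nabla\cdot v_k\to\nabla\cdot v$ in $L^2(\Omega)$ and $G$ is bounded on $\overline\Omega$.

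Hence the identity holds for every $v\in H^1_0(\Omega,\R^n)$. For $v$ with $\nabla\cdot v=0$ the right-hand side vanishes, which gives \eqref{exlemma}. There is no real obstacle. The only point needing care is that $g$ is merely $\mathcal{C}^1$ on $\R$ with no growth control, and boundedness of $\Omega$ handles this. An alternative is to multiply $G$ by a cutoff equal to $1$ on a neighbourhood of $\overline\Omega$, so that the test function lies in $\mathcal{C}^1_c(\R^n)$ and the standard weak formulation applies directly.
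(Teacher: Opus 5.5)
Your proof is correct and follows the paper's: test $\nabla\cdot v=0$ against $\varphi(x)=g(x_j)$ and integrate by parts. The only difference is that you spell out the density argument, approximating $v$ in $H^1$ by fields in $\mathcal{C}^\infty_c(\Omega,\R^n)$ that need not be solenoidal, which justifies dropping the boundary terms; the paper leaves this step implicit.
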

\noindent
\begin{proof}
Let $\Omega$, $v$, and $g$ be as in the lemma. Then for  	
arbitrary $\varphi\in \mathcal{C}^1(\overline{\Omega}, \mathbb{R})$,  integration by parts yields:
\begin{align*}
0=& \int_\Omega \varphi (x) (\nabla \cdot v) (x) \, dx= -\int_\Omega \nabla\varphi(x) \cdot v(x)\, dx.
\end{align*}
Choosing $j\in\{1,\ldots,n\}$ and $\varphi(x)= g(x_j)$, we find \eqref{exlemma}.\end{proof}

\subsection{A variational characterisation and a sharp lower bound}

Our first result states that the (inner) infimum in \eqref{bogo11} is, in fact, a minimum and that it is achieved by a unique vector field; cf.
\cite{costabel2017inequalities,costabel,horgan,ladyzhenskaya1978some,velte1998inequalities,zsuppan2020}. This means that, among all solutions of \eqref{Bogdom} in
$H^1_0 (\Omega, \mathbb{R}^{n})$, there is a privileged one.

\begin{theorem}\label{minBog}
For any $f\in L^2_0(\Omega,\R)$ there exists a unique solution
$u_f\in H^1_0(\Omega,\R^n)$ to \eqref{Bogdom} such that
\begin{equation}\label{inf}
\|\nabla u_f\|_{L^2(\Omega)}^2=\inf\Big\{\|\nabla v\|_{L^2(\Omega)}^2\, ;\ v\in H^1_0(\Omega,\R^n)\, ,\ \nabla\cdot v=f\text{ in }\Omega\Big\}\, .
\end{equation}
Moreover, with $H^1_{0,\sigma}(\Omega,\R^n)$ as in \eqref{H10s}, satisfying the following Euler-Lagrange equation
\begin{equation}\label{ELeq}
\int_\Omega \nabla u : \nabla v\, dx=0\qquad\forall v\in H^1_{0,\sigma}(\Omega,\R^n)
\end{equation}
is necessary and sufficient for a solution $u\in H^1_0(\Omega,\R^n)$ of \eqref{Bogdom} to minimise \eqref{inf} so that $u\equiv u_f$. Equivalently,
there exists a unique $p_{f} \in L^2_0(\Omega,\R)$ such that the pair $(u_{f},p_{f})$ weakly solves the generalised Stokes system
\begin{equation} \label{gstokes}
\left\{
\begin{aligned}
	& - \Delta u_{f} + \nabla p_{f} = 0 \, , \quad \nabla \cdot u_{f} = f \ \ \mbox{ in } \ \ \Omega \, , \\[5pt]
	& u_{f}=0 \ \ \mbox{ on } \ \ \partial \Omega  \, .
\end{aligned}
\right.
\end{equation}
Moreover, if $\Omega$ has a boundary of class $\mathcal{C}^{m+1,1}$ and $f\in H^{m}(\Omega, \mathbb{R}) \cap L^2_0(\Omega, \mathbb{R})$, for some $m \in \mathbb{N}$, then $u_f\in H^{m+1}(\Omega, \mathbb{R}^{n}) \cap H^1_0(\Omega, \mathbb{R}^{n})$.
\end{theorem}
\noindent
\begin{proof} If $f=0$ the result is trivial and $u_f=0$.\par
If $f\neq0$, by linearity of \eqref{Bogdom} we may assume that $\|f\|_{L^2(\Omega)}=1$. Fixed any such $f$, consider a minimising sequence,
that is,
a sequence of solutions $\{u^k\}\subset H_0^1(\Omega,\R^n)$ to \eqref{Bogdom} whose norms $\|\nabla u^k\|_{L^2(\Omega)}^2$ tend to the infimum in \eqref{inf}.
Then $\{u^k\}\subset H_0^1(\Omega,\R^n)$ is bounded and, up to a subsequence, it converges weakly to some $u_f\in H_0^1(\Omega,\R^n)$. In turn,
$\nabla\cdot u^k\rightharpoonup \nabla\cdot u_f$ in $L^2(\Omega,\R)$ so that $\int_\Omega \phi\nabla\cdot u^k\, dx\to\int_\Omega \phi\nabla\cdot u_f\, dx$ as
$k\to\infty$ for all $\phi\in L^2(\Omega,\R)$, which shows that $u_f$ solves \eqref{Bogdom}. Moreover, by lower semicontinuity of the norm with respect
to weak convergence, we have
$$
\|\nabla u_f\|_{L^2(\Omega)}^2 \le \liminf_{k\to\infty}\|\nabla u^k\|_{L^2(\Omega)}^2
= \inf\Big\{\|\nabla v\|_{L^2(\Omega)}^2\, ;\ v\in H^1_0(\Omega, \R^n)\, ,\ \nabla\cdot v=f\text{ in }\Omega\Big\}=:M
$$
which shows that $u_f\in H_0^1(\Omega,\R^n)$ achieves the minimum in \eqref{inf}.\par
In order to prove uniqueness, assume that two vector fields $u,v\in H^1_0(\Omega,\R^n)$ solve
\eqref{Bogdom} with $\|\nabla u\|_{L^2(\Omega)}^2=\|\nabla v\|_{L^2(\Omega)}^2=M$. Then, by linearity,
any convex combination $w_\alpha=\alpha u+(1-\alpha)v$ also solves \eqref{Bogdom}. Moreover, by the H\"older inequality,
\begin{align*}
\|\nabla w_\alpha\|_{L^2(\Omega)}^2 &= \|\alpha\nabla u+(1-\alpha)\nabla v\|_{L^2(\Omega)}^2\\
& = \alpha^2\|\nabla u\|_{L^2(\Omega)}^2
+(1-\alpha)^2\|\nabla v\|_{L^2(\Omega)}^2+2\alpha(1-\alpha)\int_\Omega\nabla u : \nabla v\, dx\\
& \le \alpha^2 M+(1-\alpha)^2M+2\alpha(1-\alpha)\|\nabla u\|_{L^2(\Omega)}\|\nabla v\|_{L^2(\Omega)}=M\, .
\end{align*}
Since $M$ is the minimum, the above inequality is an equality, that is,
$$
\int_\Omega\nabla u : \nabla v\, dx=\|\nabla u\|_{L^2(\Omega)}\|\nabla v\|_{L^2(\Omega)}\, .
$$
But the H\"older inequality becomes an equality if and only if the two involved functions are proportional, namely
$\nabla v\equiv\gamma \nabla u$ for some $\gamma>0$. Since $\|\nabla u\|_{L^2(\Omega)}^2=\|\nabla v\|_{L^2(\Omega)}^2=M$, we necessarily
have $\gamma=1$ so that $\nabla v\equiv\nabla u$ and $v\equiv u$.
This also proves that \eqref{ELeq} is satisfied for any $v\in H^1_{0,\sigma}(\Omega,\R^n)$.\par
Conversely, the linearity of the subspace of $H^1_0(\Omega,\R^n)$
of solutions  of \eqref{Bogdom} and the strict convexity of the functional $v\mapsto \| \nabla v\|^2_{L^2(\Omega)}$ on this subspace prove
also the sufficiency of \eqref{ELeq}.
\par
Making now use of the decomposition \eqref{scomp} of $L^2(\Omega,\R^n)$, we see that \eqref{ELeq} is the weak formulation of \eqref{gstokes}.
We obtain a unique pressure $p_f\in L^2_0(\Omega,\R)$ corresponding to the privileged solution  $u_f$ of the divergence equation \eqref{Bogdom} such that
$$
(\Delta u_f)_j = \partial_{x_{j}} p_f \qquad \forall j \in \{1,...,n\} \, .
$$
This yields system \eqref{gstokes}.

Finally, the higher-order regularity statement follows directly from \cite[Theorem IV.5.8]{boyer2012mathematical}.
\end{proof}

Theorem \ref{minBog} deserves several comments. First, it enables us to slightly modify the variational characterisation of $C_B(\Omega)$
in \eqref{bogo11}:
\begin{equation}\label{bogo}
C_B(\Omega) \doteq \sup_{f\in L^2_0(\Omega,\R) \setminus \{0\}} \ \dfrac{\| \nabla u_{f} \|_{L^2(\Omega)}^2}{\| f \|_{L^2(\Omega)}^2} \, ,
\end{equation}
with $u_{f} \in H_{0}^{1}(\Omega,\R^n)$ being the unique vector field in $H_{0}^{1}(\Omega,\R^n)$ satisfying \eqref{ELeq}.\par
Second, the regularity statement in Theorem \ref{minBog} {\em only} refers to the special solution $u_f$. A simple counter-example is
as follows: let $\B\subset\R^2$ be the unit disc and let
$$
f\equiv0\mbox{ in }\B\, ,\qquad u(x)=\left(\frac{x_2(1-|x|^2)}{|x|^{2/3}}\, ,\, \frac{x_1(|x|^2-1)}{|x|^{2/3}}\right)\, ,
$$
so that $\nabla\cdot u=f\in C^\infty(\overline{\B},\R)$ and $u\in H^1_0\setminus H^2(\B,\R^2)$, while $u_f\equiv0$.
The (obvious) reason of this failure is that elliptic regularity does not apply to \eqref{Bogdom} while the special solution $u_f$
also solves the elliptic problem \eqref{gstokes}. We come back to this issue in Section \ref{polyBogo}.\par
Third, and most important, by using Theorem \ref{minBog}, we also provide a sharp lower bound for the Bogovskii constant in
bounded  Lipschitz domains $\Omega\subset\R^n$ and a non-minimality criterion that will be applied in Section \ref{applications}.
Given any index $i \in \{1,...,n\}$, let
\begin{equation}\label{eq:def_K_i}
\mathcal{K}_{i}(\Omega, \mathbb{R}) \doteq \{ g : \Omega \to \mathbb{R} \ ; \ g(x)=g(x_i) \quad \forall x \in \Omega \ \} \, 
\end{equation}
denote the vector space of scalar functions on $\Omega$ that depend, at most, on the $i$-th variable (in particular, $\mathcal{K}_i(\Omega, \mathbb{R})$ contains constant functions).
In the next result we show that the Bogovskii constant is bounded from below by $n$, thereby improving \eqref{lowBog}; moreover, this lower bound is sharp.
Indeed, in Section \ref{sec:4_1} below we shall see that it is attained on any ball of $\R^n$.

\begin{theorem}\label{lowerbound}
	For any bounded Lipschitz domain $\Omega\subset\R^n$ one has
	$$
	C_B(\Omega)\ge n \, .
	$$
Moreover, $C_B(\Omega)>n$ whenever one of the following two facts occurs:\\
$\bullet$ there exist $i \in \{1,...,n\}$ and $g\in\mathcal{K}_{i}(\Omega, \mathbb{R}) \cap L^{2}(\Omega, \mathbb{R})$ such that the unique solution $w \in H^{1}_{0}(\Omega, \mathbb{R})$ of
\begin{equation}\label{generalTorsion}
- \Delta w = g\mbox{ in }\Omega\, , \qquad w = 0\mbox{ on }\partial \Omega  \, ,
	\end{equation}
	satisfies
	\begin{equation}\label{condition}
	\int_\Omega \left| \dfrac{\partial w}{\partial x_{i}} \right|^2 \, dx<\frac{1}{n}\int_\Omega|\nabla w|^2\, dx\, ,
	\end{equation}
$\bullet$ there exists $h\in \mathcal{C}(\R, \mathbb{R})$ such that the unique solution to
\begin{equation}\label{hsum}
-\Delta w=h\left(\sum_{i=1}^nx_i\right)\mbox{ in }\Omega\, ,\qquad w=0\mbox{ on }\partial\Omega\, ,
\end{equation}
satisfies
\begin{equation}\label{negativepd}
\sum_{1=i<j}^{n}\int_\Omega\partial_{x_i}w\, \partial_{x_j}w\, dx<0\, .
\end{equation}
\end{theorem}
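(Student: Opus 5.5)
The idea is to build explicit privileged solutions $u_f$ from scalar Dirichlet problems and then evaluate the Rayleigh quotient in \eqref{bogo}. The mechanism comes from Lemma \ref{lem:lemma1}. If $w\in H^1_0(\Omega,\R)$ solves $-\Delta w=g$ with $g\in\mathcal{K}_i(\Omega,\R)$, set
$$U=w\,e_i\in H^1_0(\Omega,\R^n),$$
that is, only the $i$-th component is nonzero. For every $v\in H^1_{0,\sigma}(\Omega,\R^n)$,
$$\int_\Omega\nabla U:\nabla v\,dx=\int_\Omega\nabla w\cdot\nabla v_i\,dx=\int_\Omega g(x_i)\,v_i\,dx .$$
Writing $g=G'$ with $G$ an antiderivative, the last integral vanishes by Lemma \ref{lem:lemma1}. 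Hence $U$ satisfies \eqref{ELeq}, so by Theorem \ref{minBog} it is the privileged solution $u_f$ for $f=\partial_{x_i}w$. This $f$ lies in $L^2_0$ because $w$ vanishes on $\partial\Omega$. Lemma \ref{lem:lemma1} is stated only for $g\in\mathcal{C}^1$, i.e. continuous $g$. For a general $g\in L^2$ I will approximate $G(x_i)$ by smooth functions of $x_i$, or simply note that $\int_\Omega\nabla\Phi\cdot v\,dx=0$ for any $\Phi\in H^1(\Omega)$ with $v\in H^1_{0,\sigma}$; here $\Phi=G(x_i)$ is Lipschitz, hence in $H^1$. From \eqref{bogo} we then get
$$C_B(\Omega)\ge\frac{\|\nabla w\|_{L^2}^2}{\|\partial_{x_i}w\|_{L^2}^2}.$$
This requires $\partial_{x_i}w\neq0$, which holds for $w\not\equiv0$: a nonzero $H^1_0$ function cannot be constant in the $x_i$ direction on a bounded domain.

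For the bound $C_B(\Omega)\ge n$, take $g\equiv1$, which belongs to every $\mathcal{K}_i$, and let $w$ be the associated torsion function. Since
$$\sum_{i=1}^n\|\partial_{x_i}w\|_{L^2}^2=\|\nabla w\|_{L^2}^2,$$
some index $i$ satisfies $\|\partial_{x_i}w\|^2\le\frac1n\|\nabla w\|^2$, and therefore $C_B(\Omega)\ge n$. The first bullet follows immediately from the same estimate: condition \eqref{condition} yields a ratio strictly greater than $n$.

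For the second bullet, use the diagonal field $U=w\,(1,\dots,1)^T$, where $w$ solves \eqref{hsum}. For $v\in H^1_{0,\sigma}(\Omega,\R^n)$,
$$\int_\Omega\nabla U:\nabla v\,dx=\int_\Omega h\Big(\sum_k x_k\Big)\sum_j v_j\,dx=\int_\Omega\nabla\Phi\cdot v\,dx=0,\qquad \Phi(x)=H\Big(\sum_k x_k\Big),\ H'=h .$$
So again $U=u_f$, now with $f=\sum_j\partial_{x_j}w$. We have $\|\nabla U\|^2=n\|\nabla w\|^2$ and
$$\|f\|^2=\|\nabla w\|^2+2\sum_{i<j}\int_\Omega\partial_{x_i}w\,\partial_{x_j}w\,dx .$$
Under \eqref{negativepd} this gives $0<\|f\|^2<\|\nabla w\|^2$. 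Positivity of $\|f\|^2$ follows as before: $f\neq0$ because $w\not\equiv0$ cannot be constant along the direction $(1,\dots,1)$. Hence the quotient exceeds $n$.

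The only delicate step is verifying that these explicit fields satisfy \eqref{ELeq}, i.e. the extension of Lemma \ref{lem:lemma1} to the relevant functions of $x_i$ or of $\sum_k x_k$. This I handle by the $H^1$ integration by parts against divergence-free $H^1_0$ fields described above.
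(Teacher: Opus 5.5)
Your proposal is correct and follows the paper's proof essentially step by step. You use the field $w\,e_i$, with the torsion function and the index minimising $\|\partial_{x_i}w\|_{L^2}$ for the bound $C_B(\Omega)\ge n$, and the diagonal field $w\,(1,\dots,1)^T$ for the second criterion; each is checked against \eqref{ELeq} via (an extension of) Lemma \ref{lem:lemma1} and inserted into \eqref{bogo}. Your extra care about $f\neq0$ and about applying Lemma \ref{lem:lemma1} to $g\in L^2$ (a point the paper passes over) is welcome. However, $G(x_i)$ need not be Lipschitz when $g$ is unbounded, so the clean justification is your other route: approximate $g$ in $L^2(\Omega)$ by continuous functions of $x_i$ alone, which is possible because $\int_\Omega|g(x_i)|^2\,dx$ is a weighted $L^2$ norm on the projection interval with finite slice-measure weight, and then pass to the limit in $\int_\Omega g_k(x_i)\,v_i\,dx=0$.
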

\noindent\begin{proof} We start by noticing the elementary inequality for scalar functions
	\begin{equation}\label{elementary}
	\int_\Omega|\nabla w|^2 \, dx=\sum_{i=1}^{n} \left( \int_\Omega \left| \dfrac{\partial w}{\partial x_{i}} \right|^2 \, dx \right) \ge n \min_{k \in \{1,...,n \} }\ \int_\Omega \left| \dfrac{\partial w}{\partial x_{k}} \right|^2\, dx \qquad
	\forall w\in H^1_0(\Omega, \mathbb{R})\, .
	\end{equation}
	We apply \eqref{elementary} to the weak solution $w\in H^1_0(\Omega, \mathbb{R})$ of the {\em torsion problem}
\begin{equation}\label{torsionpb}
- \Delta w = 1\mbox{ in }\Omega \, ,\qquad w = 0\mbox{ on }\partial \Omega \, ,
\end{equation}
that is,
\begin{equation}\label{partialk0}
\int_\Omega \nabla w \cdot \nabla \varphi \, dx = \int_\Omega \varphi \, dx\qquad \forall \varphi \in H^1_0(\Omega, \mathbb{R}) \, .
\end{equation}
Let  $k\in\{1,...,n\}$ denote  the index of the partial derivative of $w$ attaining the minimum in the right hand side of \eqref{elementary};
	clearly, there may be more than one such index, e.g.\ in domains with some symmetry property. Then, \eqref{elementary} becomes
	\begin{equation}\label{partialk}
	\int_\Omega|\nabla w|^2 \, dx\ge n\, \int_\Omega \left| \dfrac{\partial w}{\partial x_{k}} \right|^2 \, dx \, .
	\end{equation}
	Take the vector field $u=(u_1,...,u_n)\in H^1_0(\Omega, \mathbb{R}^{n})$ such that $u_k=w$ and $u_i\equiv0$ for $i\neq k$, and define $f \in L_{0}^{2}(\Omega, \R)$ by
	$$
	f\doteq\nabla\cdot u=\dfrac{\partial w}{\partial x_{k}} \quad \text{in} \ \ \Omega \, .
	$$
	We claim that $u = u_f$ as in Theorem \ref{minBog}; given any $v = (v_{1},...,v_{n}) \in H^{1}_{0,\sigma}(\Omega, \mathbb{R}^{n})$, notice that
	$$
	\int_{\Omega} \nabla u : \nabla v \, dx= \int_{\Omega} \nabla w \cdot \nabla v_{k}\, dx = \int_{\Omega} v_{k}\, dx = 0 \, ,
	$$
	owing to Lemma \ref{lem:lemma1} and \eqref{partialk0}, so that the Euler-Lagrange equation \eqref{ELeq} is fulfilled.
	From Theorem \ref{minBog} and \eqref{bogo}+\eqref{partialk} we then deduce
	$$
	C_B(\Omega)=\sup_ {h \in L^2_0(\Omega) \setminus \{0\}}\frac{\|\nabla u_h\|_{L^2(\Omega)}^2}{\|h\|_{L^2(\Omega)}^2} \geq \frac{\|\nabla u_f\|_{L^2(\Omega)}^2}{\|f\|_{L^2(\Omega)}^2}=\frac{\|\nabla w\|_{L^2(\Omega)}^2}{\|\partial_{x_k}w\|_{L^2(\Omega)}^2} \ge n \, ,
	$$
	thus providing the stated lower bound for $C_B(\Omega)$.\par
	In order to prove the first non-minimality criterion, consider $i \in \{1,...,n\}$, $g\in\mathcal{K}_{i}(\Omega, \mathbb{R}) \cap L^{2}(\Omega, \mathbb{R})$ and $w\in H^1_0(\Omega, \mathbb{R})$  satisfying \eqref{generalTorsion}-\eqref{condition}, meaning that
	\begin{equation}\label{partialk1}
	\int_\Omega \nabla w \cdot \nabla \varphi  \, dx = \int_\Omega g \, \varphi \, dx\qquad \forall \varphi \in H^1_0(\Omega, \mathbb{R}) \, .
	\end{equation}
	Take the vector field
	$u=(u_1,...,u_n)\in H^1_0(\Omega, \mathbb{R}^{n})$ such that $u_i=w$ and $u_j\equiv0$ for $j \neq i$, and define $h \in L_{0}^{2}(\Omega, \mathbb{R})$ by
	$$
	h\doteq\nabla\cdot u=\dfrac{\partial w}{\partial x_{i}} \quad \text{in} \ \ \Omega \, .
	$$
	As before, we claim that $u = u_h$; given any vector field $v = (v_{1},...,v_{n}) \in H^{1}_{0,\sigma}(\Omega, \mathbb{R}^{n})$, notice that
	$$
	\int_{\Omega} \nabla u : \nabla v \, dx= \int_{\Omega} \nabla w \cdot \nabla v_{i} \, dx= \int_{\Omega} g \, v_{i}\, dx = 0 \, ,
	$$
	owing to Lemma \ref{lem:lemma1} and \eqref{partialk1}, so that the Euler-Lagrange equation \eqref{ELeq} is fulfilled.  Then, by using \eqref{bogo} and \eqref{condition}, we find
	$$
	C_B(\Omega)\ge\frac{\|\nabla u_{h}\|_{L^2(\Omega)}^2}{\|h\|_{L^2(\Omega)}^2}=\frac{\|\nabla w\|_{L^2(\Omega)}^2}{\|\partial_{x_i}w\|_{L^2(\Omega)}^2}>n\, ,
	$$
	which completes the proof of the first criterion.\par
For the second criterion, let $w\in H^1_0(\Omega, \mathbb{R})$ satisfy both \eqref{hsum} and \eqref{negativepd}.
Take the vector field $u=(u_i,...,u_n)=(w,...,w)\in H^1_0(\Omega, \mathbb{R}^{n})$. Then \eqref{ELeq} is satisfied and
$$
|\nabla u|^2=n|\nabla w|^2\, ,\qquad|\nabla\cdot u|^2=|\nabla w|^2+2\sum_{1=i<j}^{n}\partial_{x_i}w\, \partial_{x_j}w\, .
$$
Therefore, in view of \eqref{negativepd}, we have
$$
\int_\Omega|\nabla u|^2\, dx=n\int_\Omega|\nabla\cdot u|^2\, dx-2n\sum_{1=i<j}^{n}\int_\Omega\partial_{x_i}w\, \partial_{x_j}w\, dx>n\int_\Omega|\nabla\cdot u|^2\, dx\, ,
$$
which proves again that $C_B(\Omega)>n$. We point out that this second criterion is equivalent to the first one after a careful
(and lengthy) change of variables. We provided here a direct proof.\end{proof}

\subsection{Applications of the non-minimality criterion in 2D symmetric domains}\label{applications}

The non-minimality criterion in Theorem \ref{lowerbound} appears to be useful in symmetric domains where the torsion problem \eqref{torsionpb}
may give equal $L^2$-norms for all the partial derivatives of the solution.
As a first application, we show that a square in $\R^2$ does not minimise $C_B$.

\begin{example}\label{Example_3.1}
The unique solution to
$$
-\Delta w=\sin(x_1)\quad\mbox{in }(0,\pi)^2\, ,\qquad w=0\quad\mbox{on }\partial(0,\pi)^2\, ,
$$
is explicitly given by
$$
w(x_1,x_2)=\sin(x_1)\left(1-\frac{\sinh(x_2)+\sinh(\pi-x_2)}{\sinh(\pi)}\right)\, .
$$
Then we compute
$$
\partial_{x_1}w=\cos(x_1)\left(1-\frac{\sinh(x_2)+\sinh(\pi-x_2)}{\sinh(\pi)}\right)\, ,\quad
\partial_{x_2}w=\sin(x_1)\, \frac{\cosh(\pi-x_2)-\cosh(x_2)}{\sinh(\pi)}\, ,
$$
and notice that
\begin{align}
\int_{(0,\pi)^2}|\partial_{x_1}w|^2 \, dx&= 
\frac{\pi}{2\sinh^2(\pi)}\int_{0}^{\pi}\left[\sinh(\pi)-\sinh(x_2)-\sinh(\pi-x_2)\right]^2dx_2 \notag\\
&= \frac{\pi(\cosh(\pi)-1)}{2\sinh^2(\pi)}\left( \pi\cosh(\pi)+2\pi -3\sinh(\pi) \right)\notag\\
&\stackrel{(*)}{<}\frac{\pi(\cosh(\pi)-1)}{2\sinh^2(\pi)}\left( \sinh(\pi)-\pi \right)\notag\\
&= \frac{\pi}{2\sinh^2(\pi)}\int_{0}^{\pi}\left[\cosh(\pi-x_2)-\cosh(x_2)\right]^2dx_2=\int_{(0,\pi)^2}|\partial_{x_2}w|^2\, dx \label{square}
\end{align}
so that \eqref{condition} is satisfied  and
\begin{equation}\label{eq:bogovskii_rectangle}
C_B\big((0,\pi)^2\big)\ge1+\frac{\int_{(0,\pi)^2}|\partial_{x_2}w|^2\, dx}{\int_{(0,\pi)^2}|\partial_{x_1}w|^2 \, dx}=
\frac{ \pi\cosh(\pi)+\pi -2\sinh(\pi)}{ \pi\cosh(\pi)+2\pi -3\sinh(\pi)}>2.
\end{equation}
In order to see that the inequality $(*)$ --and hence \eqref{eq:bogovskii_rectangle}-- is correct, one has to discuss the function
\begin{equation*}
f:[0,\infty]\to\R,\qquad f(s)\doteq 4\sinh(s) -3s-s\cosh(s)
\end{equation*}
and to prove that $f(\pi)>0$. In fact, $f(0)=f'(0)=f''(0)=0$, $f$ is strictly convex close to $s=0$, has a unique flex point, is eventually strictly concave and $\lim_{s\to\infty}f(s)=-\infty $: hence $f$ has a unique zero in $(0,\infty)$. Since $f(3.2) > 0.05$ we have that $f|_{(0,3.2]}>0$ and, in particular, that $f(\pi)>0$.
\end{example}

\begin{remark}
Horgan-Payne \cite[(6.37)]{horgan} proved that $C_B\big((0,\pi)^2\big)\le4+2\sqrt2 \approx 6.82$ and conjectured \cite[(6.39)]{horgan} that
$C_B\big((0,\pi)^2\big)=7/2$.
Later, Costabel-Dauge \cite[(7.3)]{costabel} proved that $C_B\big((0,\pi)^2\big)\ge2\pi/(\pi-2)\approx5.5$. A numerical evaluation of
\eqref{eq:bogovskii_rectangle} only shows that $C_B\big((0,\pi)^2\big)\ge 2.0438\ldots$ But our purpose is just to show how the
non-minimality criterion in Theorem \ref{lowerbound} applies; see also the comments following Corollary~\ref{ballmin}.
\end{remark}

Let us now apply the non-minimality criterion in Theorem \ref{lowerbound} to 2D symmetric domains defined by epigraphs, with obvious
but heavy extensions to $n\ge3$.

\begin{corollary}\label{epigraphs}
Assume that
\begin{equation}\label{phiphi}
\phi\in H^2\cap H^1_0((-1,1),\mathbb{R})\mbox{ satisfies }\phi(x_1)>0\mbox{ in }(-1,1),\ \phi(-x_1)=\phi(x_1)\quad\forall|x_1|\le1 \, ,
\end{equation}
so that $\phi$ is positive and even, vanishing at the endpoints; assume moreover that
\begin{equation}\label{provided}
3\int_{0}^{1}\!x_1^2\phi(x_1)^3\phi'(x_1)^2\, dx_1<\int_{0}^{1}x_1^2\phi(x_1)^3\, dx_1\, .
\end{equation}
Then the open bounded symmetric planar domain
$$\Omega_\phi=\big\{(x_1,x_2)\in\R^2;\, -1<x_1<1,\ -\phi(x_1)<x_2<\phi(x_1)\big\}$$
satisfies $C_B(\Omega_\phi)>2$.
\end{corollary}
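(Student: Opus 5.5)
We would apply the first non-minimality criterion of Theorem \ref{lowerbound} with $n=2$. For $n=2$, condition \eqref{condition} says exactly that $\int_{\Omega_\phi}|\partial_{x_i}w|^2<\int_{\Omega_\phi}|\partial_{x_j}w|^2$ for $j\neq i$. The structure of \eqref{provided} suggests taking $i=1$ and an odd datum $g(x_1)=x_1\in\mathcal{K}_1(\Omega_\phi,\R)\cap L^2$. We then compare the two partial derivatives through a model function. On the epigraph domain the natural candidate for the solution of \eqref{generalTorsion} is
$$\tilde w(x_1,x_2)=\tfrac12\,x_1\big(\phi(x_1)^2-x_2^2\big),$$
which lies in $H^1_0(\Omega_\phi)$ because $\phi\in H^2\cap H^1_0$. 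For $\tilde w$ we have $\partial_{x_2}\tilde w=-x_1x_2$ and $\partial_{x_1}\tilde w=\tfrac12(\phi^2-x_2^2)+x_1\phi\phi'$. Integrating in $x_2\in(-\phi,\phi)$ gives $\int x_1^2x_2^2\,dx_2=\tfrac23x_1^2\phi^3$ and $\int x_1^2\phi^2\phi'^2\,dx_2=2x_1^2\phi^3\phi'^2$. So the quantities appearing in \eqref{provided} are precisely these two contributions, and the evenness of $\phi$ lets us integrate only over $(0,1)$.

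Since the exact solution $w$ is not $\tilde w$ (indeed $-\Delta\tilde w=x_1-\tfrac12(\phi^2)''x_1-2\phi\phi'$), we must make the comparison rigorous without solving the PDE. The plan is to work with the quantity
$$Q(w)\doteq\int_{\Omega_\phi}\big(|\partial_{x_2}w|^2-|\partial_{x_1}w|^2\big)\,dx$$
and to evaluate it for the true solution $w$ by a Rellich--Pohozaev identity with the horizontal field $X=(x_1,0)$. We multiply $-\Delta w=x_1$ by $x_1\partial_{x_1}w$ and integrate by parts; for the domain integral we use $\int x_1^2\partial_{x_1}w=-2\int x_1w$. This expresses $Q(w)$ as $-4\int x_1w$ (up to the exact constant) plus a boundary term $\int_{\partial\Omega_\phi}x_1\nu_1(\partial_\nu w)^2\,d\sigma$. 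Here $\nu_1=-\phi'(x_1)/\sqrt{1+\phi'^2}$ on the graphs $x_2=\pm\phi(x_1)$, and by evenness and oddness of the integrands the terms pair up over $x_1>0$. To justify the integration by parts on a merely Lipschitz domain, we would first approximate by smooth even $\phi$, or note that $H^2$ regularity of $w$ holds away from the two corners $x_1=\pm1$ and handle the corners by cutoff.

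The resulting expression involves $\int x_1w=\int|\nabla w|^2$ and the Neumann data of $w$, neither of which is explicit. The key step, and the main obstacle, is to bound these by quantities computed from $\tilde w$. For the energy we would use the dual variational characterisation
$$\int|\nabla w|^2=\max_{\psi\in H^1_0}\Big(2\int x_1\psi-\int|\nabla\psi|^2\Big),$$
tested with suitable multiples of $\tilde w$. This gives a lower bound on $\int x_1 w$ by an expression in $\int_0^1x_1^2\phi^3$ and $\int_0^1x_1^2\phi^3\phi'^2$. The sign of the boundary term, controlled through $x_1\phi'(x_1)\le 0$ where $\phi$ decreases, must then be handled. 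If this sign does not cooperate, we would try first the alternative of estimating the error $w-\tilde w$ in energy, which solves a Poisson problem with datum $\tfrac12(\phi^2)''x_1+2\phi\phi'$. We expect the coefficient $3$ in \eqref{provided} to come out from the ratio $2\,/\,\tfrac23$ found above. We anticipate that the constants must be tracked carefully for the inequality $Q(w)>0$ to follow exactly from \eqref{provided}. Once $Q(w)>0$ is established, Theorem \ref{lowerbound} yields $C_B(\Omega_\phi)>2$.
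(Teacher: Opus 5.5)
There is a genuine gap: you never carry out the step you yourself call the key one, and the route you chose is unlikely to close it. For the solution of $-\Delta w=x_1$, the Rellich identity with $X=(x_1,0)$ gives exactly
\[
Q(w)=4\int_{\Omega_\phi}|\nabla w|^2\,dx-\int_{\partial\Omega_\phi}x_1\nu_1\,(\partial_\nu w)^2\,d\sigma .
\]
On both graphs $x_1\nu_1=-x_1\phi'(x_1)/\sqrt{1+\phi'(x_1)^2}$, which is $\ge0$ wherever $\phi$ decreases on $(0,1)$; this holds everywhere for any concave $\phi$. So you would need an \emph{upper} bound on the Neumann data of an unknown function in terms of its energy. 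The dual variational principle only gives a \emph{lower} bound on the energy and says nothing about $\partial_\nu w$.

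More fundamentally, \eqref{provided} is tailored to $\tilde w$, not to the solution with datum $x_1$. There is no reason why $Q(w)>0$ for that solution should follow from \eqref{provided}. Your fallback of estimating $w-\tilde w$ in energy has the same defect. The datum $\tfrac12(\phi^2)''x_1+2\phi\phi'$ is not small (for the parabola it is $\tfrac56x_1^3-\tfrac12x_1$), while \eqref{provided} is only a strict inequality with no quantitative margin.

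The missing idea is that Theorem~\ref{lowerbound} allows \emph{any} $g\in\mathcal{K}_1(\Omega_\phi,\R)\cap L^2(\Omega_\phi,\R)$, not just $g=x_1$. You already computed that $-\Delta\tilde w=x_1-\tfrac12(\phi^2)''x_1-2\phi\phi'$ depends on $x_1$ alone, so simply take $g\doteq-\Delta\tilde w$. Since $\phi\in H^2(-1,1)$, the functions $\phi$ and $\phi'$ are bounded and $\phi''\in L^2$. Hence $g\in L^2(\Omega_\phi)$, and $\tilde w\in H^2\cap H^1_0(\Omega_\phi)$ is then \emph{the} solution of \eqref{generalTorsion}. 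This is exactly the paper's proof, with $w=x_1(x_2^2-\phi^2)=-2\tilde w$. No PDE estimates, Pohozaev identities or corner regularity are needed.

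In the remaining explicit computation, do not discard terms. $|\partial_{x_1}\tilde w|^2$ also contains $\tfrac14(\phi^2-x_2^2)^2+x_1\phi\phi'(\phi^2-x_2^2)$. After integration in $x_2$ this gives $\tfrac4{15}\phi^5+\tfrac43x_1\phi^4\phi'$, and its integral over $(0,1)$ vanishes after integrating by parts, because $\phi(1)=0$. You then get $\int_{\Omega_\phi}|\partial_{x_1}\tilde w|^2=4\int_0^1x_1^2\phi^3\phi'^2$ and $\int_{\Omega_\phi}|\partial_{x_2}\tilde w|^2=\tfrac43\int_0^1x_1^2\phi^3$. So \eqref{condition} with $n=2$, $i=1$ is precisely \eqref{provided}.
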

\noindent
\begin{proof} Take the function $w(x_1,x_2)=x_1\big(x_2^2-\phi(x_1)^2\big)$ that satisfies $w\in H^{2}\cap H^1_0(\Omega_\phi, \mathbb{R})$ and
$$
\begin{array}{c}
\partial_{x_1}w(x_1,x_2)=x_2^2-\phi(x_1)^2-2x_1\phi(x_1)\phi'(x_1)\, ,\quad
\partial_{x_2}w(x_1,x_2)=2x_1x_2\, ,\\
-\Delta w=2\Big(2\phi(x_1)\phi'(x_1)+x_1\phi'(x_1)^2+x_1\phi(x_1)\phi''(x_1)-x_1\Big)\doteq g(x_1)\mbox{ a.e.}\, ,
\end{array}
$$
so that \eqref{generalTorsion} is fulfilled. Exploiting the double
symmetry of $\Omega_\phi$, we find
{\small
\begin{align*}
\int_{\Omega_\phi}\!|\partial_{x_1}w|^2 &= 4\int\limits_{0}^{1}\!\int\limits_{0}^{\phi(x_1)}\!\Big[x_2^4\!+\!\phi(x_1)^4\!+\!4x_1^2
\phi(x_1)^2\phi'(x_1)^2\!-\!2\phi(x_1)^2x_2^2\!-\!4x_1\phi(x_1)\phi'(x_1)x_2^2\!+\!4x_1\phi(x_1)^3\phi'(x_1)\Big]dx_2dx_1\\
 &=4\int_{0}^{1}\!\Big[\frac{\phi(x_1)^5}{5}\!+\!\phi(x_1)^5\!+\!4x_1^2
\phi(x_1)^3\phi'(x_1)^2\!-\!\frac{2\phi(x_1)^5}{3}\!-\!\frac{4x_1\phi(x_1)^4\phi'(x_1)}{3}\!+\!4x_1\phi(x_1)^4\phi'(x_1)\Big]dx_1\\
\mbox{(by parts) }&=16\int_{0}^{1}\!x_1^2\phi(x_1)^3\phi'(x_1)^2\, dx_1\, ,
\end{align*}
}
$$\int_{\Omega_\phi}|\partial_{x_2}w|^2=16\int_{0}^{1}x_1^2\int_{0}^{\phi(x_1)}x_2^2\, dx_2dx_1=
\frac{16}3 \int_{0}^{1}x_1^2\phi(x_1)^3\, dx_1\, ,
$$
so that \eqref{condition} is fulfilled since \eqref{provided} holds. Hence $C_{B}(\Omega_\phi) > 2$ by Theorem \ref{lowerbound}.
\end{proof}

It is clear that \eqref{provided} is satisfied whenever $\|\phi'\|_\infty$ is sufficiently small.

\begin{example}
The function $\phi(x_1)=(1-x_1^2)/2\sqrt3$ satisfies \eqref{phiphi} and $|\phi'(x_1)|<1/\sqrt3$ for $x_1\in[0,1)$.
Hence, \eqref{provided} is fulfilled.\par\vskip-1mm\noindent
\begin{minipage}{110mm}
Therefore, by Corollary \ref{epigraphs}, the domain
$$\Omega_\phi=\big\{(x_1,x_2)\in\R^2;\, -1<x_1<1,\ \tfrac{x_1^2-1}{2\sqrt3}<x_2<\tfrac{1-x_1^2}{2\sqrt3}\big\}$$
(see the picture on the right) satisfies $C_B(\Omega_\phi)>2$.
\end{minipage}\qquad\qquad
\begin{minipage}{50mm}
\includegraphics[width=45mm]{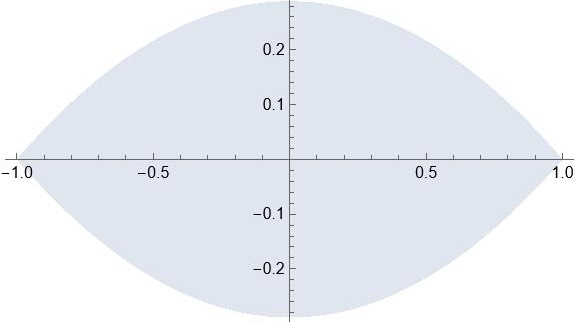}
\end{minipage}
\end{example}

In fact, the non-minimality criterion also applies to nonconvex domains.

\begin{example}
The function $\phi(x_1)=(x_1^4 + 0.1)(1-x_1^2)$ satisfies \eqref{phiphi} and fulfills \eqref{provided}, the integrands being polynomials
and computable by hand.\par\vskip-1mm\noindent
\begin{minipage}{110mm}
Therefore, by Corollary \ref{epigraphs}, the domain
$$\Omega_\phi=\big\{(x_1,x_2)\in\R^2;\, -1<x_1<1,\ -\phi(x_1)<x_2<\phi(x_1)\big\}$$
(see the picture on the right) satisfies $C_B(\Omega_\phi)>2$.
\end{minipage}\qquad\qquad
\begin{minipage}{50mm}
\includegraphics[width=45mm]{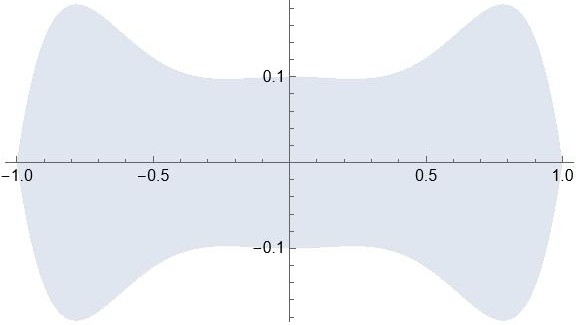}
\end{minipage}
\end{example}

\subsection{Different functional settings}\label{VEL}

While \eqref{scomp} is obtained through an orthogonal decomposition of $L^2(\Omega,\R^n)$, we consider here an orthogonal decomposition
in $H^1_0(\Omega,\R^n)$ with respect to the scalar product
\begin{equation}\label{scalarH10}
\langle u,v\rangle_{H^1_0}\doteq\int_\Omega \nabla u : \nabla v\, dx\, .
\end{equation}
Let $H^1_{0,\sigma}(\Omega,\R^n)$ be as in \eqref{H10s} and consider the extended (for $n\neq3$) Velte-Crouzeix spaces
\begin{equation}\label{spaces}
W\doteq\{u\in H^1_0(\Omega,\R^n):\ \nabla u \equiv (\nabla u)^{T} \},\quad
H\doteq(H^1_{0,\sigma}(\Omega,\R^n) \oplus W)^\perp=H^1_{0,\sigma}(\Omega,\R^n)^\perp\cap W^\perp\, ,
\end{equation}
with orthogonality intended with respect to the scalar product \eqref{scalarH10}.
Thanks to the homogeneous boundary conditions, $W$ consists of suitable gradient fields, irrespective of whether $\Omega$ is simply connected or not: when $n=3$ and $\Omega$ is simply connected, $W$ is the kernel of the curl operator \cite{velte2}.
In order to characterise $H$, we first notice that \eqref{ELeq} in Theorem \ref{minBog} states that $u_f\in H^1_{0,\sigma}(\Omega,\R^n)^\perp$.
Since \eqref{ELeq} is the weak formulation of \eqref{gstokes}, for any $p\in L^2_0 (\Omega,\R)$ there exists a unique $u\in H^1_{0,\sigma}(\Omega,\R^n)^\perp$ such that
\begin{equation}\label{eq:comm_1.2}
-\Delta u+\nabla p=0\quad \mbox{in}\quad  \Omega, \qquad u=0\quad \mbox{on}\quad \partial \Omega,
\end{equation}
which, in weak form, reads
\begin{equation}\label{eq:comm_1.1}
	\exists ! u\in H^1_{0,\sigma}(\Omega,\R^n)^\perp\subset H^1_0(\Omega,\R^n)\quad\mbox{s.t.}\quad
	\langle  u,w\rangle_{H^1_0} =\langle p,\nabla \cdot w \rangle_{L^2}\quad \forall w\in  H^1_0(\Omega,\R^n).
\end{equation}
Indeed, existence and uniqueness of $u\in H^1_0(\Omega,\R^n)$ follow from the Riesz Representation Theorem, while inserting
$w\in H^1_{0,\sigma}(\Omega,\R^n)$ yields $u\in H^1_{0,\sigma}(\Omega,\R^n)^\perp$. Conversely, as in the proof of Theorem~\ref{minBog}, one sees that for any
$u\in H^1_{0,\sigma}(\Omega,\R^n)^\perp$ there exists a unique $p\in L^2_0 (\Omega,\R)$ such that \eqref{eq:comm_1.1} is satisfied.
This establishes a bijective correspondence
$$
H^1_{0,\sigma}(\Omega,\R^n)^\perp\ni u \mapsto p_u\in  L^2_0 (\Omega,\R)
$$
via \eqref{eq:comm_1.1}. As in \cite[Theorem 1]{Velte_LNM1431_1990} we claim that
\begin{equation}\label{charH}	
H=\{u\in H^1_{0,\sigma}(\Omega,\R^n)^\perp;\ p_u \ \mbox{is (weakly) harmonic}\}.
\end{equation}

\noindent
\begin{proof} We first take $u\in H^1_{0,\sigma}(\Omega,\R^n)^\perp\cap W^\perp$ so that $\langle u,w\rangle_{H^1_0}=0$ for all $w\in H^1_{0,\sigma}(\Omega,\R^n)$ and, then, by \eqref{eq:comm_1.1}, $p_u\in L^2_0 (\Omega,\R)$ is such that
\begin{equation}\label{eq:comm_1.1bis}
\langle  u,w\rangle_{H^1_0} =\langle p_u,\nabla \cdot w \rangle_{L^2}\qquad\forall w\in  H^1_0(\Omega,\R^n).
\end{equation}
For all $\varphi\in \mathcal{C}^\infty_c(\Omega,\R)$ we have $\nabla \varphi \in W$ and, then, we infer $\langle u,\nabla \varphi\rangle_{H^1_0}=0$
after recalling that $u\in W^\perp$ by \eqref{spaces}. Therefore, \eqref{eq:comm_1.1bis} implies
\begin{equation}\label{weakharm}
\langle p_u,\Delta \varphi\rangle_{L^2}=\langle u,\nabla \varphi\rangle_{H^1_0}=0\quad\forall \varphi\in C^\infty_c(\Omega,\R)
\end{equation}
and shows that $p_u$ is (weakly) harmonic.
For the converse inclusion, assume that $p_u$ is weakly harmonic, that is, \eqref{weakharm} holds.
By density we infer that $\langle u, w\rangle_{H^1_0}=0$ for all $w\in W$, i.e., $u\in W^\perp$. Since $u\in H^1_{0,\sigma}(\Omega,\R^n)^\perp$
by assumption, $u\in H$ follows. These two-ways inclusions prove \eqref{charH}.
\end{proof}

We may now give  alternative variational characterisations of the Bogovskii constant $C_B$ in
\eqref{bogo11}, in a different functional setting.

\begin{proposition}\label{lem:comm_1.6}
Let $C_B(\Omega)$ be as in \eqref{bogo11}. Then
\begin{equation}\label{eq:comm_1.3}
C_B(\Omega)=\sup_{u\in H^1_{0,\sigma}(\Omega,\R^n)^\perp\setminus \{0\}} \frac{\|\nabla u\|_{L^2(\Omega)}^2}{\| \nabla \cdot u\|_{L^2(\Omega)}^2}\qquad\mbox{and}\qquad
C_B(\Omega)=\sup_{f\in L^2_0(\Omega,\R)\setminus \{0\}}\ \frac{\|  f\|_{L^2(\Omega)}^2}{\| \nabla f\|_{H^{-1}(\Omega)}^2} \, .
\end{equation}
\end{proposition}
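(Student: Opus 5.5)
The first identity follows from the reformulation \eqref{bogo} of $C_B$ together with a description of the privileged solutions. By Theorem \ref{minBog}, the map $f\mapsto u_f$ is linear and injective from $L^2_0(\Omega,\R)$ into $H^1_{0,\sigma}(\Omega,\R^n)^\perp$. Indeed, \eqref{ELeq} says precisely that $u_f\perp H^1_{0,\sigma}$ with respect to \eqref{scalarH10}, and $\nabla\cdot u_f=f$.

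Conversely, take any $u\in H^1_{0,\sigma}(\Omega,\R^n)^\perp$ and set $f\doteq\nabla\cdot u$. Then $f\in L^2_0$ by the divergence theorem, since $u$ has zero trace. Moreover $u$ solves \eqref{Bogdom} and satisfies \eqref{ELeq}, so the sufficiency part of Theorem \ref{minBog} gives $u=u_f$. In particular $f=0$ forces $u=0$. Hence $f\mapsto u_f$ is a bijection $L^2_0\to H^1_{0,\sigma}(\Omega,\R^n)^\perp$ with inverse $u\mapsto\nabla\cdot u$. Substituting into \eqref{bogo} turns the quotient $\|\nabla u_f\|^2/\|f\|^2$ into $\|\nabla u\|^2/\|\nabla\cdot u\|^2$, which proves the first formula.

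For the second identity I use the dual norm $\|\nabla f\|_{H^{-1}}=\sup_{w\in H^1_0\setminus\{0\}}\langle f,\nabla\cdot w\rangle_{L^2}/\|\nabla w\|_{L^2}$, up to the sign convention $\langle\nabla f,w\rangle=-\langle f,\nabla\cdot w\rangle$. For $p\in L^2_0$, let $u_p\in H^1_{0,\sigma}(\Omega,\R^n)^\perp$ be the Riesz representative given by \eqref{eq:comm_1.1}. Then $\|\nabla p\|_{H^{-1}}=\|\nabla u_p\|_{L^2}$, and testing with $w=u_p$ gives $\|\nabla u_p\|^2=\langle p,\nabla\cdot u_p\rangle$. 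The correspondence $p\mapsto u_p$ is the bijection established before \eqref{charH}, and the first formula applies to $u_p$.

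\emph{Upper bound.} Cauchy--Schwarz together with the first formula gives
$$\|\nabla u_p\|^2\le\|p\|\,\|\nabla\cdot u_p\|\le\|p\|\,\|\nabla u_p\|\,C_B^{1/2}.$$
Hence $\|\nabla u_p\|^2\le C_B\|p\|^2$, that is, $\|p\|^2/\|\nabla p\|_{H^{-1}}^2\ge C_B^{-1}$. This is the wrong direction for the claimed identity: the quotient $\|p\|^2/\|\nabla p\|^2_{H^{-1}}$ is naturally bounded \emph{below} by $1/C_B$. The supremum is only as written if one interprets the quotient as $\|\nabla p\|_{H^{-1}}^2/\|p\|^2$ and takes an infimum, or else reads the identity as the Ne\v{c}as form $1/C_B=\inf_p\|\nabla p\|_{H^{-1}}^2/\|p\|^2$. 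I would therefore prove $\inf_p\|\nabla p\|_{H^{-1}}^2/\|p\|^2=1/C_B$ and reconcile this with the displayed formula by taking reciprocals.

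\emph{Equality.} The key identity is
$$\|\nabla p\|_{H^{-1}}=\sup_{u\in H^1_{0,\sigma}{}^\perp}\frac{\langle p,\nabla\cdot u\rangle}{\|\nabla u\|}.$$
Restricting the supremum to $H^1_{0,\sigma}{}^\perp$ is allowed because the component of $w$ in $H^1_{0,\sigma}$ contributes nothing to $\langle p,\nabla\cdot w\rangle$ and only increases $\|\nabla w\|$. As $u$ ranges over $H^1_{0,\sigma}{}^\perp$, the divergence $\nabla\cdot u=f$ ranges over all of $L^2_0$, with $\|\nabla u\|=\|\nabla u_f\|$. This gives
$$\|\nabla p\|_{H^{-1}}=\sup_f\frac{\langle p,f\rangle}{\|\nabla u_f\|},\qquad\text{hence}\qquad \|\nabla p\|_{H^{-1}}^2=\langle S^{-1}p,p\rangle,$$
where $S$ is the bounded, self-adjoint, positive operator on $L^2_0$ defined by $\langle Sf,f\rangle=\|\nabla u_f\|^2$. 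It is bounded by $C_B$ and bounded below by $1$ via Lemma \ref{lem:general_lower_bound}. Spectral theory then gives $\sup\langle Sf,f\rangle/\|f\|^2=\|S\|=C_B$ and $\inf\langle S^{-1}p,p\rangle/\|p\|^2=1/\|S\|$, which yields the result.

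\textbf{Main obstacle.} The delicate step is justifying $\|\nabla p\|_{H^{-1}}^2=\langle S^{-1}p,p\rangle$, since it needs the inverse $S^{-1}$. I would handle it by maximizing $\langle p,f\rangle^2/\langle Sf,f\rangle$ via Cauchy--Schwarz in the $S$-inner product, with $f=S^{-1}p$ as the maximizer. Alternatively, one can identify $S^{-1}p=\nabla\cdot u_p$ directly from \eqref{eq:comm_1.1}, which avoids inverting $S$ abstractly.
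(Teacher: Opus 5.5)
\paragraph{Verdict.} Your proof is correct, but for the second identity it takes a genuinely different route from the paper. Your treatment of the first identity is the paper's: the bijection $f\mapsto u_f$ of $L^2_0(\Omega,\R)$ onto $H^1_{0,\sigma}(\Omega,\R^n)^\perp$, substituted into \eqref{bogo}.

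\paragraph{The two routes for the second identity.} The paper proves two elementary inequalities.
\begin{itemize}
\item Testing the dual norm with $u_f$ gives $\|\nabla f\|_{H^{-1}}\ge\|f\|_{L^2}^2/\|\nabla u_f\|_{L^2}$. Hence the Ne\v{c}as quotient is at most $C_B(\Omega)$.
\item Conversely, the pressure $p_f$ of \eqref{gstokes} satisfies $\|\nabla p_f\|_{H^{-1}}=\|\nabla u_f\|_{L^2}$ and $\|\nabla u_f\|_{L^2}^2=\int_\Omega p_f f\le\|p_f\|_{L^2}\|f\|_{L^2}$. This gives $C_B(\Omega)\le C_{\mathcal N}(\Omega)$.
\end{itemize}
You instead compute $\|\nabla p\|_{H^{-1}}^2=\langle S^{-1}p,p\rangle$ exactly. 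Define $S$ by polarization, $\langle Sf,g\rangle=\int_\Omega\nabla u_f:\nabla u_g\,dx$; it is the divergence-to-pressure map $f\mapsto p_f$. Its inverse is precisely the Schur complement $\mathcal S$ of \eqref{Schur_complement}, which is your alternative identification $S^{-1}p=\nabla\cdot u_p$. You then conclude by spectral calculus, using the coercivity $S\ge I$ from Lemma~\ref{lem:general_lower_bound}.

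\paragraph{What each route buys.} The paper's route needs only Cauchy--Schwarz: no invertibility, no spectral theorem, not even Lemma~\ref{lem:general_lower_bound}. Yours gives more information:
\begin{itemize}
\item It shows that the paper's inequality $\|\nabla f\|_{H^{-1}}\ge\|f\|^2_{L^2}/\|\nabla u_f\|_{L^2}$ is an equality exactly when $f$ is an eigenfunction of $\mathcal S$. This substantiates the paper's remark that an equality sign there is unjustified.
\item It yields $1/C_B(\Omega)=\min\sigma(\mathcal S)$ directly, which is part of \eqref{spectrum1}.
\end{itemize}

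\paragraph{Two corrections.} First, there is nothing to ``reconcile''. For a positive quotient, $\sup_p\|p\|^2/\|\nabla p\|^2_{H^{-1}}=\big(\inf_p\|\nabla p\|^2_{H^{-1}}/\|p\|^2\big)^{-1}$. So the Ne\v{c}as form $\inf=1/C_B(\Omega)$ is literally the displayed identity, and the displayed formula is not in any ``wrong direction''.

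Second, drop the ``Upper bound'' paragraph. Its conclusion $\|\nabla p\|^2_{H^{-1}}\le C_B(\Omega)\|p\|^2$ is weaker than the trivial bound $\|\nabla p\|_{H^{-1}}\le\|p\|$, which follows from Lemma~\ref{lem:general_lower_bound}, and it contributes nothing to the proof. Moreover, its second inequality does not follow from the first formula. That formula gives the opposite comparison $\|\nabla u\|\le C_B(\Omega)^{1/2}\|\nabla\cdot u\|$; your inequality holds only because $\|\nabla\cdot u\|\le\|\nabla u\|$ and $C_B(\Omega)\ge1$.
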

\noindent
\begin{proof} By using the spaces in \eqref{spaces}, we may rephrase \eqref{ELeq} in Theorem \ref{minBog} as
$$
\mbox{for all $f\in L^2_0 (\Omega,\R)$, a solution $u\in H^1_{0}(\Omega,\R^n)$ to \eqref{Bogdom} coincides with $u_f$ if and only if }
u\in H^1_{0,\sigma}(\Omega,\R^n)^\perp.
$$
From \eqref{bogo} and this observation we deduce \eqref{eq:comm_1.3}$_1$.

Now, let us define
\begin{equation} \label{necasconstant}
C_{\mathcal{N}}(\Omega) \doteq \sup_{f\in L^2_0(\Omega,\R)\setminus \{0\}}\ \frac{\|  f\|_{L^2(\Omega)}^2}{\| \nabla f\|_{H^{-1}(\Omega)}^2} \, ,
\end{equation}
so that $C_{\mathcal{N}}(\Omega) < +\infty$, in view the Ne\v{c}as inequality \cite{Necas_ineq}. For any $f\in L^2_0 (\Omega,\R)\setminus \{0\}$, employing the orthogonal decomposition $H^1_0(\Omega,\R^n)=H^1_{0,\sigma}(\Omega,\R^n)\oplus H^1_{0,\sigma}(\Omega,\R^n)^\perp$, we find that
\begin{align*}
\| \nabla f\|_{H^{-1}(\Omega)}=&\sup_{u\in H^1_0(\Omega,\R^n)\setminus \{0\}}\
\frac{\int_\Omega f\cdot(\nabla \cdot u) \, dx}{\|\nabla u\|_{L^2(\Omega)}}
	=\sup_{u\in H^1_{0,\sigma}(\Omega,\R^n)^\perp\setminus \{0\}}\frac{\int_\Omega f\cdot(\nabla \cdot u) \, dx}{\|\nabla u\|_{L^2(\Omega)}}\\[6pt]
	=& \sup_{u\in H^1_{0,\sigma}(\Omega,\R^n)^\perp\setminus \{0\}}\frac{\int_\Omega (\nabla\cdot u_f)\cdot(\nabla \cdot u) \, dx}
{\|\nabla u\|_{L^2(\Omega)}}
	\geq \frac{\|\nabla\cdot u_f\|^2_{L^2(\Omega)}}{\|\nabla u_f\|_{L^2(\Omega)}}.
\end{align*}
We remark that in the literature here often an equality sign is put, which we think is not justified.
From this we conclude with the help of \eqref{eq:comm_1.3}$_1$:
\begin{align*}
\sup_{f\in L^2_0 (\Omega,\R)\setminus \{0\}} \ \frac{\|  f\|_{L^2(\Omega)}^2}{\| \nabla f\|_{H^{-1}(\Omega)}^2}=&
\sup_{f\in L^2_0 (\Omega,\R)\setminus \{0\}} \ \frac{\|  \nabla\cdot u_f\|_{L^2(\Omega)}^2}{\| \nabla f\|_{H^{-1}(\Omega)}^2}
	\leq \sup_{f\in L^2_0 (\Omega,\R)\setminus \{0\}} \ \frac{\|  \nabla\cdot u_f\|_{L^2(\Omega)}^2  \|\nabla u_f\|_{L^2(\Omega)}^2}{\| \nabla \cdot u_f\|_{L^2(\Omega)}^4}\\[6pt]
	=&\sup_{u\in H^1_{0,\sigma}(\Omega,\R^n)^\perp\setminus \{0\}} \ \frac{\|\nabla u\|_{L^2(\Omega)}^2}{\| \nabla \cdot u\|_{L^2(\Omega)}^2}
	=C_B(\Omega) \, ,
\end{align*}
i.e., $C_{\mathcal{N}}(\Omega) \leq C_B(\Omega)$. On the other hand, take any $f\in L^2_0 (\Omega,\R) \setminus \{0\}$. Recalling Theorem \ref{minBog}, consider the unique pair $(u_{f}, p_{f}) \in H^1_{0}(\Omega,\R^n) \times L^{2}_{0}(\Omega,\R)$ satisfying the generalised Stokes system \eqref{gstokes}, so that
\begin{equation} \label{pressureandvel}
\| \nabla p_{f} \|_{H^{-1}(\Omega)} = \| \Delta u_{f} \|_{H^{-1}(\Omega)} = \| \nabla u_{f} \|_{L^{2}(\Omega)} \, .
\end{equation}
Moreover, by testing \eqref{gstokes}$_1$ with $u_{f}$, applying the H\"older inequality and \eqref{necasconstant}, we obtain
$$
\| \nabla u_{f} \|^{2}_{L^{2}(\Omega)} = \int_{\Omega} p_{f} \, f \leq \|p_{f}\|_{L^2(\Omega)} \|f\|_{L^2(\Omega)} \leq \sqrt{C_{\mathcal{N}}(\Omega)} \, \| \nabla p_{f} \|_{H^{-1}(\Omega)} \|f\|_{L^2(\Omega)} \, ,
$$
which, due to \eqref{pressureandvel}, simplifies to
$$
\| \nabla u_{f} \|_{L^{2}(\Omega)} \leq \sqrt{C_{\mathcal{N}}(\Omega)} \, \|f\|_{L^2(\Omega)} \qquad \forall f\in L^2_0 (\Omega,\R) \setminus \{0\} \, .
$$
Owing to \eqref{bogo}, the last inequality implies $C_B(\Omega) \leq C_{\mathcal{N}}(\Omega)$, thereby proving \eqref{eq:comm_1.3}$_2$.
\end{proof}

\begin{remark}
	The first part of the previous proof gives the interpolation-type inequality
	 $$
	 \|\nabla\cdot u_f\|_{L^2(\Omega)}^2 \le 
	\| \nabla f\|_{H^{-1}(\Omega)} \|\nabla u_f\|_{L^2(\Omega)}
	\qquad\forall f\in L^2_0 (\Omega,\R),
	$$
cf. also Lemma~\ref{lem:general_lower_bound}.	
\end{remark}

Next, we define the spaces
$$
A(\Omega,\R) \doteq \{ p \in L_{0}^{2}(\Omega,\R) \, ;\, \exists \psi \in H_{0}^{2}(\Omega,\R), \ p = \Delta \psi \ \ \text{in} \ \ \Omega \, \} \, ,
$$
so that the following orthogonal decomposition holds:
\begin{equation} \label{decompl02}
L_{0}^{2}(\Omega,\R) = A(\Omega,\R) \oplus A(\Omega,\R)^{\perp} \, ,
\end{equation}
cf. \cite[Section 2]{Crouzeix_1997}. Notice that $A(\Omega,\R)^{\perp}$ is the subspace of $L_{0}^{2}(\Omega,\R)$ comprising weakly harmonic functions. Then, recall that the \textit{Schur complement of the Stokes operator} in $\Omega$ is the bounded linear
operator $\mathcal{S} : L_{0}^{2}(\Omega,\R) \longrightarrow L_{0}^{2}(\Omega,\R)$ obeying the formula
\begin{equation}\label{Schur_complement}
\mathcal{S}(q) \doteq \nabla \cdot V_{q} \qquad \forall q \in L_{0}^{2}(\Omega,\R) \, ,
\end{equation}
where $V_{q} \in H_{0}^{1}(\Omega,\R^n)$ is the unique vector field such that $\Delta V_{q} = \nabla q$ in weak sense, that is:
\begin{equation}\label{qVq}
\int_{\Omega} \nabla V_{q} : \nabla v \, dx = \int_{\Omega} q(\nabla \cdot v) \, dx \qquad \forall v \in H_{0}^{1}(\Omega,\R^{n}) \, .
\end{equation}
It can be seen, as in \cite[Theorem 11.1]{weyers2006q}, that
\begin{equation} \label{sondecomp}
\mathcal{S}(q) = q \qquad \forall q \in A(\Omega,\R) \, ; \qquad \mathcal{S}(q) \in A(\Omega,\R)^{\perp} \qquad \forall q \in A(\Omega,\R)^{\perp} \, .
\end{equation}
As proved in \cite[Theorem 1]{gaultier1996spectral}, $\mathcal{S}$ is self-adjoint, positive definite, has unit operator norm, and $1/C_{B}(\Omega)$ is the minimal spectral value of $\mathcal{S}$ by employing \eqref{eq:comm_1.3}$_2$.
More precisely, letting $\sigma(\mathcal{S})$ denote the spectrum of $\mathcal{S}$, one has
\begin{equation} \label{spectrum1}
\left\{ \dfrac{1}{C_{B}(\Omega)},1 \right\} \subset \sigma(\mathcal{S}) \subset \left[ \dfrac{1}{C_{B}(\Omega)},1 \right] \, .
\end{equation}

We now use the spaces defined in \eqref{spaces} to analyse
the so-called {\em Cosserat eigenvalue problem} that consists in finding $(u,\lambda)\in [H^1_{0}(\Omega,\R^n)\setminus \{0\}]\times\R$ such that
$$
\Delta u=\lambda \nabla (\nabla \cdot u)\quad \mbox{in}\quad \Omega,\qquad u=0\quad \mbox{on}\quad \partial\Omega,
$$
whose weak formulation reads
\begin{equation}\label{eq:comm_1.5}
\langle  u,w\rangle_{H^1_0} =\lambda \langle \nabla\cdot u,\nabla \cdot w \rangle_{L^2}\qquad \forall w\in  H^1_{0}(\Omega,\R^n).
\end{equation}
One may observe that this is the Euler-Lagrange equation for extremal functions in \eqref{eq:comm_1.3}$_1$.

Assuming $n=3$, parts of the  following result were obtained in \cite[Proposition 3]{Velte_LNM1431_1990}. With the generalised characterisation of $W$ in \eqref{spaces}, we extend the proof to any $n\ge2$ and supplement the original statement by connecting it with the Schur complement.

\begin{proposition}\label{prop:comm_1.3}
Consider the spaces defined in \eqref{spaces}. Then the eigenvalue problem \eqref{eq:comm_1.5} has the following properties:
\begin{itemize}
\item[(a)] $\lambda\in\R$ is an eigenvalue of \eqref{eq:comm_1.5} if and only if $\lambda^{-1}$ is an eigenvalue of $\mathcal{S}$ with, respectively,
associated eigenvectors $V_q\in H^1_0(\Omega,\R^n)$ and $q\in L^2_0(\Omega,\R)$ connected through \eqref{qVq};
\item[(b)] $W$ is the eigenspace for the eigenvalue $\lambda=1$ of infinite multiplicity;
\item[(c)] $H^1_{0,\sigma}(\Omega,\R^n)$ is the eigenspace for the eigenvalue $\lambda=\infty$ of infinite multiplicity;
\item[(d)] any other eigenvalue $\lambda\not\in\{1,\infty\}$ is contained in the interval $(1,C_B(\Omega)]$ and the corresponding eigenvectors belong
to $H$.
\end{itemize}
\end{proposition}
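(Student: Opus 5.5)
The plan is to work throughout in the Hilbert space $H^1_0(\Omega,\R^n)$ with the scalar product \eqref{scalarH10}. I will use the orthogonal decomposition $H^1_0=H^1_{0,\sigma}\oplus W\oplus H$, which holds by the definition \eqref{spaces} once we check that $W\perp H^1_{0,\sigma}$. For this check, take $w=\nabla\phi\in W$ with $\phi\in H^2\cap H^1_0$; this representation is valid because $\nabla w$ is symmetric and $w$ vanishes on the boundary. Then, by density and integration by parts, $\langle w,v\rangle_{H^1_0}=-\int_\Omega \Delta w\cdot v=-\int_\Omega\nabla(\Delta\phi)\cdot v=0$ for every divergence-free $v\in H^1_{0,\sigma}$. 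Equivalently, the computation in the proof of Lemma \ref{lem:general_lower_bound} gives, for any $w\in H^1_0$,
$$\langle w,v\rangle_{H^1_0}-\langle\nabla\!\cdot w,\nabla\!\cdot v\rangle_{L^2}=\int_\Omega \nabla w:(\nabla v-(\nabla v)^T)\,dx,$$
which vanishes whenever $\nabla w$ is symmetric.

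\emph{Part (a).} Suppose $u$ solves \eqref{eq:comm_1.5} with $\lambda\neq0$. Set $q\doteq\lambda\nabla\cdot u\in L^2_0$. Then \eqref{eq:comm_1.5} is exactly \eqref{qVq}, so $u=V_q$, and therefore $\mathcal S(q)=\nabla\cdot V_q=\lambda^{-1}q$. We have $q\neq0$, for otherwise $\langle u,u\rangle_{H^1_0}=0$ and $u=0$. Conversely, if $\mathcal S q=\mu q$ with $\mu>0$ (recall $\mathcal S$ is positive definite), then $V_q$ satisfies $\langle V_q,w\rangle_{H^1_0}=\langle q,\nabla\cdot w\rangle_{L^2}=\mu^{-1}\langle\nabla\cdot V_q,\nabla\cdot w\rangle_{L^2}$, so $V_q$ is an eigenvector with $\lambda=\mu^{-1}$. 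Here $V_q\neq0$ because $\mathcal S q\neq0$. The value $\lambda=\infty$ is understood as $\mathcal S q=0$ being impossible; it corresponds to eigenvectors with $\nabla\cdot u=0$ and $\langle u,w\rangle_{H^1_0}=0$ for all $w$ in the relevant class. I read (c) in the usual convention $\lambda^{-1}=0$, that is, as the equation $0=\langle\nabla\cdot u,\nabla\cdot w\rangle_{L^2}$ holding for all $w$ exactly when $u\in H^1_{0,\sigma}$. So (c) is immediate, and its multiplicity is infinite because $H^1_{0,\sigma}$ is infinite-dimensional, by the construction \eqref{phiPhi}.

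\emph{Part (b).} The identity above shows that every $u\in W$ satisfies \eqref{eq:comm_1.5} with $\lambda=1$. Conversely, if $\lambda=1$, subtracting gives $\int_\Omega\nabla u:(\nabla w-(\nabla w)^T)=0$ for all $w$. Rewriting the integrand as $\tfrac12\int_\Omega(\nabla u-(\nabla u)^T):(\nabla w-(\nabla w)^T)$ and choosing $w=u$ yields $\nabla u=(\nabla u)^T$, i.e.\ $u\in W$. $W$ is infinite-dimensional since it contains all $\nabla\phi$ with $\phi\in\mathcal C^\infty_c$.

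\emph{Part (d).} I expect this to be the main, though modest, obstacle. Let $\lambda\notin\{1,\infty\}$ with eigenvector $u$. Taking $w=u$ gives $\lambda=\|\nabla u\|^2/\|\nabla\cdot u\|^2$, which is at least $1$ by Lemma \ref{lem:general_lower_bound}. Taking $w\in H^1_{0,\sigma}$ in \eqref{eq:comm_1.5} gives $u\in H^1_{0,\sigma}{}^\perp$. Taking $w\in W$ and using the symmetry identity gives $\langle u,w\rangle_{H^1_0}=\lambda\langle u,w\rangle_{H^1_0}$, hence $\langle u,w\rangle_{H^1_0}=0$ since $\lambda\neq1$; so $u\in W^\perp$ and therefore $u\in H$. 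Since $u\in H^1_{0,\sigma}{}^\perp$, \eqref{eq:comm_1.3}$_1$ gives $\lambda\le C_B(\Omega)$. It remains to exclude $\lambda=1$ strictly, which holds by assumption. This places $\lambda$ in $(1,C_B(\Omega)]$, consistently with \eqref{spectrum1} via (a).
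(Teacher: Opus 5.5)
Your proof is correct. Parts (b) and (c) follow the paper's argument; your skew-part identity in (b) is a tidy replacement for the paper's equality case of Cauchy--Schwarz. In (a) and (d) you take a more elementary route.

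In (a), for an eigenpair $(u,\lambda)$ you set $q=\lambda\,\nabla\cdot u\in L^2_0(\Omega,\R)$ directly. Then \eqref{eq:comm_1.5} is literally \eqref{qVq}, so uniqueness in the Riesz representation gives $u=V_q$ and $\mathcal S(q)=\lambda^{-1}q$. The paper takes a longer path:
\begin{itemize}
\item it first uses $V\in H^1_{0,\sigma}(\Omega,\R^n)^\perp$ to produce a pressure $q$ with $\Delta V=\nabla q$, which relies on the existence of a pressure for functionals vanishing on $H^1_{0,\sigma}(\Omega,\R^n)$ (a de Rham/Ne\v{c}as-type fact);
\item it then shows that $\lambda\mathcal S(q)-q$ is a constant;
\item finally it removes that constant using the zero-mean condition.
\end{itemize}
Your explicit choice of $q$ bypasses all of this.

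In (d), the paper obtains $\lambda\in(1,C_B(\Omega)]$ by transporting the spectral inclusion \eqref{spectrum1} for $\mathcal S$ through (a). That inclusion is imported from the Schur-complement literature. You instead read the bounds off the Rayleigh quotient $\lambda=\|\nabla u\|^2_{L^2(\Omega)}/\|\nabla\cdot u\|^2_{L^2(\Omega)}$. Lemma~\ref{lem:general_lower_bound} gives $\lambda\ge1$, and \eqref{eq:comm_1.3}$_1$ gives $\lambda\le C_B(\Omega)$ once $u\in H^1_{0,\sigma}(\Omega,\R^n)^\perp$. This keeps the proof self-contained within results proved in the paper. The paper's version, by contrast, emphasises the correspondence with the spectrum of $\mathcal S$.

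A few small points:
\begin{itemize}
\item In the converse of (a) you do not need positive definiteness of $\mathcal S$, since $\mu=\lambda^{-1}\neq0$ is given.
\item Your sentence about ``$\langle u,w\rangle_{H^1_0}=0$ for all $w$ in the relevant class'' for $\lambda=\infty$ is muddled. Drop it in favour of the convention $\lambda^{-1}=0$ that you state right after.
\item In your preliminary paragraph, an element of $W$ is $\nabla\phi$ with $\phi\in H^2(\Omega)$ constant on each component of $\partial\Omega$, but not in general with $\phi\in H^1_0(\Omega,\R)$. For a counterexample, take $\phi$ radial on $\mathcal A_r^R$, equal to $1$ near $|x|=r$ and $0$ near $|x|=R$. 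This is harmless: the orthogonality $W\perp H^1_{0,\sigma}(\Omega,\R^n)$ follows from your symmetric/skew identity. Moreover, the three-fold decomposition is never actually needed, since (d) only uses $H=H^1_{0,\sigma}(\Omega,\R^n)^\perp\cap W^\perp$, which is the definition \eqref{spaces}.
\end{itemize}
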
\noindent
\begin{proof}
(a) Assume that $\lambda^{-1}$ is an eigenvalue of $\mathcal{S}$ with eigenvector $q\in L^2_0(\Omega,\R)$, that is, $q=\lambda\mathcal{S}(q)=\lambda(\nabla\cdot V_q)$ with $V_{q} \in H_{0}^{1}(\Omega,\R^n)$
satisfying \eqref{qVq}; then
$$
\int_{\Omega} \nabla V_{q} : \nabla v \, dx\ \stackrel{\eqref{qVq}}{=}\ \int_{\Omega} q(\nabla \cdot v) \, dx=
\lambda\int_{\Omega} (\nabla \cdot V_q)(\nabla \cdot v) \, dx \qquad \forall v \in H_{0}^{1}(\Omega,\R^n) \, ,
$$
proving that $V_q\in H^1_0(\Omega,\R^n)$ is an eigenvector of \eqref{eq:comm_1.5} with eigenvalue $\lambda$.\par
Conversely, let $\lambda\in\R$ be an eigenvalue of \eqref{eq:comm_1.5} with corresponding eigenvector $V\in H^1_0(\Omega,\R^n)$; then,
by restricting \eqref{eq:comm_1.5} to special test functions as in \eqref{ELeq}, we obtain
$$
\langle V,v\rangle_{H^1_0} =0\qquad \forall v\in  H^1_{0,\sigma}(\Omega,\R^n).
$$
This means that there exists a unique $q\in L^2_0(\Omega,\R)$ (unique because of the zero mean condition) such that
$\Delta V = \nabla q$ in weak sense in $\Omega$, that is,
$$
\int_{\Omega} \nabla V : \nabla v \, dx = \int_{\Omega} q(\nabla \cdot v) \, dx \qquad \forall v \in H_{0}^{1}(\Omega,\R^n) \, .
$$
Hence,
$$
\int_{\Omega} q(\nabla \cdot v) \, dx=\langle V,v\rangle_{H^1_0}\ \stackrel{\eqref{eq:comm_1.5}}{=}\ \lambda \langle \nabla\cdot V,\nabla \cdot v \rangle_{L^2}
\ \stackrel{\eqref{Schur_complement}}{=}\ \lambda \langle \mathcal{S}(q),\nabla \cdot v \rangle_{L^2}\qquad \forall v\in  H^1_{0}(\Omega,\R^n)
$$
which implies that
$$
0=\int_{\Omega}(q-\lambda\mathcal{S}(q))(\nabla \cdot v) \, dx=\langle\langle\nabla(\lambda\mathcal{S}(q)-q),v\rangle\rangle\qquad \forall v\in  H^1_{0}(\Omega,\R^n) \, ,
$$
where $\langle\langle\cdot,\cdot\rangle\rangle$ is the duality product between $H^{-1}(\Omega,\R^n)$ and $H^1_{0}(\Omega,\R^n)$.
By the Riesz Theorem, this shows that $\lambda\mathcal{S}(q)-q=\gamma\in\R$ a.e.\ in $\Omega$. But $q\in L^2_0(\Omega,\R)$ and the Divergence Theorem imply
$$
\gamma|\Omega|=\int_\Omega(\lambda\mathcal{S}(q)-q)dx=\lambda\int_\Omega(\nabla\cdot V)dx=0\, ,
$$
proving that $\gamma=0$ and, in turn, $\lambda\mathcal{S}(q)=q$ a.e.\ in $\Omega$. So, also the converse implication holds.
This connection between \eqref{eq:comm_1.5} and the Schur complement $\mathcal{S}$ in \eqref{Schur_complement} was not emphasised in
\cite{Velte_LNM1431_1990}. On the other hand, this statement was obtained in \cite[Theorem 1.4]{weyers2006q} in smooth domains.\par
(b) Let $u\in W$. An integration by parts shows that, for all $w\in  H^1_{0}(\Omega,\R^n)$, we have
	\begin{align*}
		\langle  u,w\rangle_{H^1_0} =&\int_\Omega \nabla u : \nabla v\, dx
		=\sum^n_{i,j=1}\int_\Omega (\partial_i u_j)\cdot(\partial_i w_j) \, dx
		\stackrel{u\in W}{=}
		\sum^n_{i,j=1}\int_\Omega (\partial_j u_i)\cdot(\partial_i w_j) \, dx\\
		=&\sum^n_{i,j=1}\int_\Omega (\partial_i u_i)\cdot(\partial_j w_j) \, dx
		=\int_\Omega (\nabla\cdot  u)\cdot(\nabla \cdot w) \, dx,
	\end{align*}
i.e., \eqref{eq:comm_1.5} is satisfied with $\lambda=1$.
Conversely, let $u\in H^1_{0}(\Omega,\R^n)$ be an eigenfunction for $\lambda=1$. Then, choosing $w=u$ in \eqref{eq:comm_1.5}, the computations from the proof of Lemma~\ref{lowBog} yield:
	\begin{align*}
	\| \nabla u\|^2_{L^2(\Omega )}	=&\langle  u,u\rangle_{H^1_0}
			\stackrel{\eqref{eq:comm_1.5}}{=}
			\int_\Omega (\nabla\cdot  u)^2\, dx =\sum^n_{i,j=1}\int_\Omega (\partial_i u_i)(\partial_j u_j) \, dx=\sum^n_{i,j=1}\int_\Omega (\partial_j u_i)(\partial_i u_j) \, dx\\
		\le&  \int_\Omega \left( \sum_{i,j=1}^n|\partial_i u_{j}|^2\right)^{1/2} \, \left(\sum_{i,j=1}^n |\partial_j u_{i}|^2\right)^{1/2}\, dx
			= \int_\Omega |\nabla u |^2 \, dx= \| \nabla u\|^2_{L^2(\Omega )},
\end{align*}
i.e., we have equality when applying the Cauchy-Schwarz inequality. This means that $\nabla u(\,.\,)$ and $\nabla u(\,.\,)^T$
are proportional and, hence, equal. This shows that $u\in W$.\par
(c)	This is obvious.\par
(d) Let $u\in H^1_{0}(\Omega,\R^n)\setminus \{0\}$ be an eigenfunction of \eqref{eq:comm_1.5}. By combining \eqref{spectrum1} with the
just proved Item (a), we infer that any eigenvalue $\lambda\not\in\{1,\infty\}$ is contained in the interval $(1,C_B(\Omega)]$,
which is {\em strictly smaller} than the interval obtained in \cite[Proposition 3]{Velte_LNM1431_1990} for the sole case $n=3$.
Next, we choose arbitrary $w\in W $ in \eqref{eq:comm_1.5}  and find after integrating by parts:
	\begin{align*}
		\langle  u,w\rangle_{H^1_0} =&\lambda \int_\Omega (\nabla\cdot  u)\cdot(\nabla \cdot w) \, dx
		=\lambda \sum^n_{i,j=1}\int_\Omega (\partial_i u_i)\cdot(\partial_j w_j) \, dx
		=\lambda 	\sum^n_{i,j=1}\int_\Omega (\partial_j u_i)\cdot(\partial_i w_j) \, dx\\
		\stackrel{w\in W}{=}&\lambda 	\sum^n_{i,j=1}\int_\Omega (\partial_j u_i)\cdot(\partial_j w_i) \, dx
		=\lambda \langle  u,w\rangle_{H^1_0}.
	\end{align*}
	Since $\lambda\not=1$ by assumption, we conclude that $u\in W^\perp$; hence, $u\in W^\perp \cap  H^1_{0,\sigma}(\Omega,\R^n)^\perp=H$.
\end{proof}

\begin{remark}\label{0infty} We point out that even if $\infty$ is an eigenvalue for \eqref{eq:comm_1.5}, $0$ is not an eigenvalue
for $\mathcal{S} : L_{0}^{2}(\Omega,\R) \longrightarrow L_{0}^{2}(\Omega,\R)$, see \eqref{spectrum1} and Proposition \ref{prop:comm_1.3}.
\end{remark}

\subsection{Attainment of the Bogovskii constant}

We show here that, at least in sufficiently smooth domains, the supremum in \eqref{bogo} is, in fact, a maximum.

While $\lambda =1$ is an eigenvalue of $\mathcal{S}$ of infinite multiplicity (due to \eqref{sondecomp}), it is not clear, whether $1/C_{B}(\Omega)$ is also an eigenvalue of $\mathcal{S}$.
To investigate this fact, we notice that Theorem \ref{minBog} and \eqref{bogo} define the bounded linear operator
$$\mathcal{B} : L_{0}^{2}(\Omega,\R) \longrightarrow H_0^1(\Omega,\R^n)\, ,\qquad \mathcal{B}(f) \doteq u_{f}\quad\forall f\in L^2_0(\Omega,\R)$$
where boundedness (and continuity) follow from \eqref{bogo11}:
$
\| \mathcal{B} \| = \sqrt{C_B(\Omega)}<\infty\, .
$ Moreover we have:

\begin{theorem}\label{bogoatttheo}
Let $\Omega\subset\R^n$ be a bounded Lipschitz domain. Suppose that $f_{*} \in A(\Omega,\R)^{\perp} \setminus \{0\}$ is an eigenfunction of the Schur complement $\mathcal{S}$, associated with the eigenvalue $1/C_{B}(\Omega)$. Then
\begin{equation}\label{bogoatt}
	C_B(\Omega) = \dfrac{\| \nabla \mathcal{B}(f_{*}) \|_{L^2(\Omega)}^2}{\| f_{*} \|_{L^2(\Omega)}^2} \, .
\end{equation}
Moreover, if $\Omega$ is of class $\mathcal{C}^{4}$ and $C_{B}(\Omega) > 2$, then there exists an eigenfunction $f_{*} \in A(\Omega,\R)^{\perp} \setminus \{0\}$ of the Schur complement $\mathcal{S}$
associated to the eigenvalue $1/C_{B}(\Omega)$. In such case, the supremum
in both \eqref{eq:comm_1.3}$_1$ and \eqref{eq:comm_1.3}$_2$ is attained, respectively, by some $u\in H^1_{0,\sigma}(\Omega,\R^n)^\perp\setminus\{0\}$ and $f\in L^2_0(\Omega,\R)\setminus\{0\}$.
\end{theorem}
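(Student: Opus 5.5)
The proof splits into two parts: the identity \eqref{bogoatt} under the eigenfunction assumption, and the existence of such an eigenfunction on $\mathcal{C}^4$ domains with $C_B(\Omega)>2$.

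\emph{Part one.} Set $\lambda\doteq C_B(\Omega)$, so that $\mathcal{S}(f_*)=f_*/\lambda$, and let $V\doteq V_{f_*}$ be given by \eqref{qVq}. Since $\nabla\cdot V=\mathcal{S}(f_*)=f_*/\lambda$, the field $u\doteq\lambda V$ satisfies $\nabla\cdot u=f_*$. Testing \eqref{qVq} with $v\in H^1_{0,\sigma}(\Omega,\R^n)$ gives $u\in H^1_{0,\sigma}(\Omega,\R^n)^\perp$, so Theorem \ref{minBog} yields $u=u_{f_*}=\mathcal{B}(f_*)$. Testing \eqref{qVq} with $v=V$ gives
\[
\|\nabla V\|_{L^2(\Omega)}^2=\int_\Omega f_*\,\nabla\cdot V\,dx=\frac{\|f_*\|_{L^2(\Omega)}^2}{\lambda}.
\]
Hence $\|\nabla u_{f_*}\|_{L^2(\Omega)}^2=\lambda^2\|\nabla V\|_{L^2(\Omega)}^2=\lambda\|f_*\|_{L^2(\Omega)}^2$, which is \eqref{bogoatt}. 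The same computation shows that $u_{f_*}$ attains the supremum in \eqref{eq:comm_1.3}$_1$.

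For \eqref{eq:comm_1.3}$_2$, I would estimate $\|\nabla f_*\|_{H^{-1}(\Omega)}$ from above using the $H^1_0$-orthogonal decomposition, as in the proof of Proposition \ref{lem:comm_1.6}. For $u\in H^1_{0,\sigma}(\Omega,\R^n)^\perp$,
\[
\int_\Omega f_*\,\nabla\cdot u\,dx=\langle V_{f_*},u\rangle_{H^1_0},
\]
so $\|\nabla f_*\|_{H^{-1}(\Omega)}=\|\nabla V_{f_*}\|_{L^2(\Omega)}$. Therefore $\|f_*\|^2/\|\nabla f_*\|_{H^{-1}}^2=\lambda$, and the supremum in \eqref{eq:comm_1.3}$_2$ is attained as well.

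\emph{Part two.} By \eqref{sondecomp}, the space $A(\Omega,\R)^\perp$ (the weakly harmonic functions in $L^2_0$) is invariant under $\mathcal{S}$, and $\mathcal{S}=I$ on $A(\Omega,\R)$. Combined with \eqref{spectrum1}, this gives $1/C_B(\Omega)\in\sigma(\mathcal{S}|_{A^\perp})$ whenever $C_B(\Omega)>1$. I would then prove that, for $\Omega$ of class $\mathcal{C}^4$, the restriction $\mathcal{S}|_{A^\perp}$ is a compact perturbation of $\frac12 I$, i.e. its essential spectrum is $\{1/2\}$. This is the known structure of the Cosserat spectrum on smooth domains, whose essential spectrum is $\{1,2,\infty\}$ (Mikhlin, Crouzeix, Costabel--Dauge).

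Granted this, $C_B(\Omega)>2$ means $1/C_B(\Omega)<1/2$, so $1/C_B(\Omega)$ lies outside the essential spectrum. A point of the spectrum of a self-adjoint operator lying outside its essential spectrum is an isolated eigenvalue of finite multiplicity, which produces $f_*\in A(\Omega,\R)^\perp\setminus\{0\}$. Part one then gives the attainment claims.

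The main obstacle is the essential spectrum claim. My first approach would be to write, for harmonic $q$, $V_q=\tfrac12\nabla(\,\cdot\,)$ plus a boundary corrector. For harmonic $q$, $\Delta(\tfrac12 x q)=\nabla q$, so $\tfrac12 xq$ solves the interior equation in \eqref{qVq}, and $\nabla\cdot(\tfrac12 xq)=\tfrac n2 q+\tfrac12 x\cdot\nabla q$. A symmetrisation or localisation argument (flattening the boundary) should then show that the remainder $\mathcal{S}q-\tfrac12 q$ is controlled by boundary-layer terms of the form $q\mapsto$ (Dirichlet-to-Neumann or Steklov-type operator) of lower order. Regularity of $\partial\Omega\in\mathcal{C}^4$ is needed so that the pseudodifferential symbols are smooth enough to conclude that the remainder is compact on harmonic $L^2$ functions. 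If the direct computation becomes unwieldy, I would fall back on citing this result from the literature on Cosserat eigenvalues for smooth domains.
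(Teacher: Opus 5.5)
Your proposal is correct and follows the paper's proof. Part one is the same computation: you build $u_{f_*}=C_B(\Omega)V_{f_*}$ directly, whereas the paper gets $p=C_B(\Omega)f_*$ from the injectivity of $\mathcal{S}$. You also check attainment in \eqref{eq:comm_1.3}$_2$ via $\|\nabla f_*\|_{H^{-1}(\Omega)}=\|\nabla V_{f_*}\|_{L^2(\Omega)}$ instead of invoking Proposition~\ref{lem:comm_1.6}. Part two rests, as in the paper, on compactness of $\mathcal{S}-\frac12 I_d$ on $A(\Omega,\R)^\perp$ for $\mathcal{C}^4$ domains, taken from \cite[Theorem 2.1]{riedl2013cosserat} and \cite[Corollary 4]{Crouzeix_1997}. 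Rely on that citation rather than your $\frac12 xq$ heuristic, which cannot work as written: on the ball $\nabla\cdot(\frac12 xH_k)=\frac{n+k}{2}H_k$ while $\mathcal{S}H_k=\frac{k}{n+2k-2}H_k$, so the boundary corrector is not of lower order.
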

\noindent
\begin{proof} Since $f_{*} \in A(\Omega,\R)^{\perp} \setminus \{0\}$ is an eigenfunction of $\mathcal{S}$ (and in particular weakly harmonic), associated with the eigenvalue $1/C_{B}(\Omega)$, by Theorem \ref{minBog} there exists a vector field $v_{*} \in H_{0}^{1}(\Omega,\R^n) \setminus \{0\}$
	and $p\in L_{0}^{2}(\Omega,\R) $ satisfying the following generalised Stokes system in $\Omega$:
	\begin{equation*}
		\left\{
		\begin{aligned}
			& - \Delta v_{*} + \nabla p= 0 \, , \quad \nabla \cdot v_{*} = f_{*} \ \ \mbox{ in } \ \ \Omega \, , \\[5pt]
			& v_{*}=0 \ \ \mbox{ on } \ \ \partial \Omega  \, .
		\end{aligned}
		\right.
	\end{equation*}
	Since
	$$
	C_{B}(\Omega) \mathcal{S} (f_*)=f_*=\nabla\cdot v_*=\nabla\cdot \left( \Delta^{-1}\nabla p\right)=\mathcal{S}(p)
	\quad \Longrightarrow \quad \mathcal{S} \big(p-C_{B}(\Omega) f_{*} \big) = 0 \, ,
	$$
	we conclude by the injectivity of $ \mathcal{S}$ that $C_{B}(\Omega)f_*=p$ and obtain
\begin{equation} \label{gstokesschur}
	\left\{
	\begin{aligned}
		& - \Delta v_{*} + C_{B}(\Omega) \nabla f_{*} = 0 \, , \quad \nabla \cdot v_{*} = f_{*} \ \ \mbox{ in } \ \ \Omega \, , \\[5pt]
		& v_{*}=0 \ \ \mbox{ on } \ \ \partial \Omega  \, .
	\end{aligned}
	\right.
\end{equation}
From \eqref{gstokesschur} we deduce that $\mathcal{B}(f_{*}) = v_{*}$, as well as
$$
\int_{\Omega} | \nabla v_{*} |^{2} \, dx = C_{B}(\Omega) \int_{\Omega} | f_{*} |^{2} \, dx \, ,
$$
thereby proving \eqref{bogoatt}.\par
If $\Omega$ is of class $\mathcal{C}^{4}$ and $C_{B}(\Omega) > 2$, from \cite[Theorem 2.1]{riedl2013cosserat} (see \cite[Section 9.2]{riedl2010cosserat} for a detailed proof) we know that
\begin{equation}\label{defS-}
\mathcal{S} - \dfrac{1}{2} I_{d} : A(\Omega,\R)^{\perp} \longrightarrow A(\Omega,\R)^{\perp}
\end{equation}
is a compact, self-adjoint and positive operator. This result is based upon \cite[Corollary 4]{Crouzeix_1997}, cf. also \cite[p. 90]{weyers2006q}. Letting $\Sigma$ denote its spectrum, from \eqref{spectrum1} we infer
\begin{equation} \label{spectrum2}
	\left\{ \dfrac{1}{C_{B}(\Omega)} - \dfrac{1}{2}, \dfrac{1}{2} \right\} \subset \Sigma\subset\left[\dfrac{1}{C_{B}(\Omega)} - \dfrac{1}{2}, \dfrac{1}{2} \right]\, .
\end{equation}
Since the operator in \eqref{defS-} is compact, every non-zero element of its spectrum is an eigenvalue. By Theorem \ref{lowerbound} we know that $C_{B}(\Omega) \geq n$; therefore,
if $n \geq 3$ or $n=2$ and $C_{B}(\Omega) > 2$, there exists $f_{*} \in A(\Omega,\R)^{\perp} \setminus \{0\}$ such that
$$
\left(\mathcal{S} - \dfrac{1}{2} I_{d} \right)(f_{*}) = \left( \dfrac{1}{C_{B}(\Omega)} - \dfrac{1}{2} \right) f_{*} \qquad \Longrightarrow \qquad \mathcal{S}(f_{*}) = \dfrac{1}{C_{B}(\Omega)} f_{*} \, .
$$
In view of the first part of the statement, this concludes the proof, with Proposition \ref{lem:comm_1.6} showing the equivalence between
the two problems.\end{proof}

\begin{remark}
Theorem \ref{lowerbound} plays a crucial role in this proof. The only case not covered by the second statement in Theorem \ref{bogoatttheo}
is for $\mathcal{C}^{4}$-domains $\Omega\subset\mathbb{R}^2$ with $C_{B}(\Omega)=2$. Nevertheless, if $\B \subset \mathbb{R}^{2}$ is a disk,
then $C_{B}(\B)=2$, see \eqref{balln} below, and the supremum in \eqref{bogo} is attained, see \cite[Theorem 3]{gaultier1996spectral}
or Proposition~\ref{thm:comm_1.4} below.
\end{remark}

Theorem \ref{bogoatttheo} enables us to improve \eqref{bogo} with a more precise variational characterisation of the Bogovskii constant whenever $\Omega\subset\R^n$, $n \geq 2$, is a bounded domain of class $\mathcal{C}^{4}$ such that $C_{B}(\Omega) > 2$:
$$C_B(\Omega) \doteq \max_{f\in L^2_0(\Omega,\R) \setminus \{0\}} \ \dfrac{\| \nabla u_{f} \|_{L^2(\Omega)}^2}{\| f \|_{L^2(\Omega)}^2} \, ,$$
where $u_f\in H^1_0(\Omega,\R^n)$ is uniquely determined by Theorem \ref{minBog}.
By linearity, the functions $f_*$ satisfying \eqref{bogoatt} form a vector space which may 
be higher dimensional and may be in the case $n=2$ and $\Omega =\B$ even infinite dimensional, see
Proposition~\ref{thm:comm_1.4} below.

\begin{problem}\label{smooth_to_Lip}
It appears challenging to extend Theorem \ref{bogoatttheo} to any Lipschitz domain, with no regularity restriction. A good starting point for the interested reader could be
a careful reading of \cite{Bernardi}.
\end{problem}

\section{Some symmetric domains}\label{symmBog}

\subsection{Balls as minimisers  of the Bogovskii constant}\label{sec:4_1}

In general bounded Lipschitz domains it appears difficult to find more refined results than those collected in Section
\ref{sec:basics}. For this reason, we consider here balls, namely, the most symmetric domains where much more can be shown. Let $\B\subset\R^n$ ($n\ge2$) be the unit ball which, by Lemma \ref{invariance}, represents {\em any ball} in $\R^n$. Although it is difficult to say who first proved it, see the Introduction, it is known that
\begin{equation}\label{balln}
	C_B(\B)=n\, .
\end{equation}
For the sake of completeness, below we give a detailed proof of \eqref{balln} by putting together several contributions
in the literature. Before doing this, several comments are in order.\par
Combined with Theorem \ref{lowerbound}, \eqref{balln} has the following  fundamental consequence.
Recall the definition \eqref{eq:def_K_i} of $\mathcal{K}_{i}$.

\begin{corollary}\label{ballmin}
	For any bounded Lipschitz domain $\Omega\subset\R^n$ we have $C_B(\Omega)\ge C_B(\B)=n$.\par
	Moreover, for any $i \in \{1,...,n\}$ and $g\in\mathcal{K}_{i}(\B) \cap L^{2}(\B,\R)$, the following implication holds:
	\begin{equation}\label{generalTorsionball}
		- \Delta w = g\mbox{ in }\B\, ,\quad w = 0\mbox{ on }\partial \B\qquad\Longrightarrow\qquad
		\int_\B \left| \dfrac{\partial w}{\partial x_{i}} \right|^2\, dx \geq \frac{1}{n}\int_\B|\nabla w|^2\, dx\, .
	\end{equation}
	
	Furthermore, for any $h\in \mathcal{C}(\R,\R)$, the following implication holds:
	\begin{equation}\label{implication2}
		-\Delta w_h=h\left(\sum_{i=1}^nx_i\right)\mbox{ in }\B\, ,\quad w_h=0\mbox{ on }\partial\B\qquad\Longrightarrow\qquad
		\sum_{1=i<j}^{n}\int_\B \partial_{x_i}w_h\, \partial_{x_j}w_h\, dx\ge0\, .
	\end{equation}
\end{corollary}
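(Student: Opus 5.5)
The corollary follows by combining \eqref{balln} with Theorem \ref{lowerbound}, read in contrapositive form. I take \eqref{balln}, namely $C_B(\B)=n$, as given; the paper proves it later via the Cosserat spectrum, and it can be assumed at this point.

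\textbf{First claim.} Theorem \ref{lowerbound} gives $C_B(\Omega)\ge n$ for every bounded Lipschitz domain $\Omega\subset\R^n$. Together with \eqref{balln}, this yields $C_B(\Omega)\ge n=C_B(\B)$. By Lemma \ref{invariance} the same holds for every ball, not only the unit ball.

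\textbf{Implication \eqref{generalTorsionball}.} I argue by contradiction. Suppose that for some $i$ and some $g\in\mathcal{K}_i(\B)\cap L^2(\B,\R)$ the solution $w\in H^1_0(\B,\R)$ of $-\Delta w=g$ satisfies
$$\int_\B|\partial_{x_i}w|^2\,dx<\frac1n\int_\B|\nabla w|^2\,dx.$$
This is exactly hypothesis \eqref{generalTorsion}--\eqref{condition} of the first non-minimality criterion in Theorem \ref{lowerbound}, applied with $\Omega=\B$. The criterion then gives $C_B(\B)>n$, which contradicts \eqref{balln}.

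One degenerate case must be checked: $w\equiv0$, which happens if $g=0$. Then both sides of the inequality vanish, so \eqref{generalTorsionball} holds trivially. For $w\neq0$ I will also check that the function $h=\partial_{x_i}w$ used in the criterion is nonzero. This follows from the assumed strict inequality: if $\partial_{x_i}w\equiv0$, then $w$ would be constant along lines in the $x_i$ direction while vanishing on $\partial\B$, forcing $w\equiv0$. In any case the strict inequality guarantees $\|\nabla w\|>0$.

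\textbf{Implication \eqref{implication2}.} I use the second criterion in the same way. If $h\in\mathcal{C}(\R,\R)$ gave a solution $w_h$ with
$$\sum_{i<j}\int_\B\partial_{x_i}w_h\,\partial_{x_j}w_h\,dx<0,$$
then \eqref{hsum}--\eqref{negativepd} would hold on $\B$, and Theorem \ref{lowerbound} would give $C_B(\B)>n$, again contradicting \eqref{balln}.

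The only real point is that the contrapositive is legitimate, since Theorem \ref{lowerbound} is stated for arbitrary bounded Lipschitz domains and $\B$ is one. The main conceptual weight lies in \eqref{balln} itself, which is deferred. No step here is expected to be an obstacle beyond verifying that the hypotheses of the criteria are met verbatim.
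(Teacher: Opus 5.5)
Your proposal is correct and follows the paper's route exactly: the corollary is obtained by combining \eqref{balln} with Theorem~\ref{lowerbound}, and the two implications are the contrapositives of its two non-minimality criteria applied with $\Omega=\B$. Your checks of the degenerate cases ($w\equiv0$, $\partial_{x_i}w\not\equiv0$) are harmless details the paper leaves implicit, and relying on \eqref{balln} is not circular, since its later proof via the Cosserat spectrum does not use this corollary.
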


As far as we are aware, \eqref{generalTorsionball} and \eqref{implication2} are new and have their own independent interest.
On the one hand, they appear surprising and unlinked to the main focus of the present paper. On the other hand, it is
quite instructive to build an explicit example in a disc and to compare it with Example~\ref{Example_3.1} in the square $(0,\pi)^2$. The function
$$w(x)=x_1(|x|^2-1)\qquad x\in\B\subset\R^2$$
satisfies
$$
\Delta w=8x_1\quad\mbox{in }\B\, ,\qquad w=0\quad\mbox{on }\partial\B\, ;
$$
some computations then show that
$$
\int_{\B}|\partial_{x_1}w|^2\, dx=\frac{\pi}{2}>\frac{\pi}{6}=\int_{\B}|\partial_{x_2}w|^2\, dx\, ,
$$
in line with \eqref{generalTorsionball} and contrary to \eqref{square}.

\begin{problem}\label{shapeopt}
Corollary \ref{ballmin} states that balls minimise $C_B$ in \eqref{eq:comm_1.3} among Lipschitz domains. A natural question is whether balls are the only minimisers which is in our opinion the most intuitive conjecture. This is a degenerate shape optimisation
problem due to scaling invariance (see Lemma \ref{invariance}) and several tools need to be set up. From \cite[Theorem 4.4]{Bernardi} we know that the dependence
of $C_B(\cdot)$ with respect to Lipschitz convergence of domains is Lipschitzian, which gives some hope to prove that $\B$ is a {\em local} minimiser of $C_B$ through shape variation.
\end{problem}

 However, \eqref{balln} may be obtained
{\em without using symmetrisation techniques}, as we now show.\par\medskip

\noindent
\emph{Proof of \eqref{balln}.} Let us first summarise the findings of Cosserat-Cosserat \cite{Cosserat_18987,Cosserat_18986} (see also \cite{Cosserat_1896}) concerning the eigenvalue problem \eqref{eq:comm_1.5} when $\Omega=\B$, along the lines of \cite{Simader_vonWahl_2006}.
To this end, we recall some basic facts about spherical harmonics from \cite[Chapter 5, Section 5]{Sauvigny_PDE_1}. For $k\in \mathbb{N}_0$ we consider
$$
{\mathcal M}_k\doteq\{f:\mathbb{S}^{n-1}\to \R;\ f\mbox{ is a spherical harmonic of degree }k\},
$$	
so that $f\in{\mathcal M}_k$ if and only if $x\mapsto |x|^k f(x/|x|)$ is harmonic in $\mathbb{R}^n$. We know from \cite[Theorem 5.5]{Sauvigny_PDE_1} that these spaces are finite dimensional
and that their dimensions
$$
N(k,n)\doteq\operatorname{dim}{\mathcal M}_k <\infty
$$
are given by the coefficients of the generating analytic function
$$
(-1,1)\ni t\mapsto \frac{1+t}{(1-t)^{n-1}}=\sum^\infty_{k=0}N(k,n)t^k.
$$	
In particular we have $N(0,n)=1$ and $N(1,n)=n$.
We introduce the orthonormal bases
\begin{equation}\label{basis}
	\{H_{k,1},\ldots,H_{k,N(k,n)}\}
\end{equation}
of ${\mathcal M}_k$ with respect to the canonical scalar product in $L^2(\mathbb{S}^{n-1})$. From \cite[Theorem 5.6]{Sauvigny_PDE_1} we know that
$$
\bigcup_{k=0}^\infty \{H_{k,1},\ldots,H_{k,N(k,n)}\}
$$
forms a complete orthonormal system in $L^2(\mathbb{S}^{n-1})$.\par
Then we consider the space of homogeneous harmonic polynomials  of degree $k$
$$
\widetilde{\mathcal M}_k\doteq\{x\mapsto |x|^k f(x/|x|);\ f\in {\mathcal M}_k\},\quad k\in \N_0\, .
$$
Let $H_k$ denote a nontrivial element of $\widetilde{\mathcal M}_k$, so that
$$
H_k(x) = \sum_{j=1}^{N(k,n)} \alpha_{k,j} |x|^k H_{k,j}(x/|x|)
$$
for suitable $\alpha_{k,j}\in\R$, where at least one is $\not=0$. For all $k\in \N$ we then define
\begin{equation}\label{eq:comm_1.6}
	U_k\in H^1_0 (\B,\R^n),\qquad U_k(x)\doteq(|x|^2-1)\nabla H_k(x)
\end{equation}
which will turn out to be the generic eigenfunction of \eqref{eq:comm_1.5} corresponding to the eigenvalue
\begin{equation}\label{eq:comm_1.7}
	\lambda_k =2+\frac{n-2}{k}\quad (k\in\N)\quad \mbox{of multiplicity}\quad N(k,n).
\end{equation}
One may observe that all eigenvalues coincide if $n=2$. Using the well-known Euler formula for $k$-homogeneous functions $x\cdot \nabla H_k (x) = kH_k(x)$, we compute:
$$
\begin{array}{cc}
	\nabla\cdot U_k(x)=2k H_k(x)\, ,\qquad\nabla \left( \nabla \cdot U_k \right)(x) = 2k \nabla H_k(x)\\
	\Delta U_k(x)=2n \nabla H_k(x) +4(x\cdot \nabla) \nabla H_k(x)
	=2n \nabla H_k(x) +4(k-1) \nabla H_k(x)=\lambda_k \nabla \left( \nabla \cdot U_k \right)(x)\, .
\end{array}
$$
In particular, \eqref{eq:comm_1.7} follows from these identities and the following result,
which generalises \cite[Theorem 2]{Velte_LNM1431_1990} to any dimension.

\begin{proposition}\label{thm:comm_1.4}
	Taking orthonormal bases $\left( H_{k,j}\right)_{k\in \N ,\ j=1,\ldots,N(k,n)}$ of $\widetilde{\mathcal M}_k$ with respect to the
	$L^2$-scalar product, the corresponding system of eigenfunctions $\left( U_{k,j}\right)_{k\in \N ,\ j=1,\ldots,N(k,n)}$  is complete in $H$ with respect to the $H^1_0$-scalar product \eqref{scalarH10}.
	Hence, $C_B(\B)=\lambda_1=n$ and, if $n\ge 3$,
	$C_B(\B)$ is attained on $U_1(x)=(|x|^2-1)\nabla H_1(x)$ with $H_1$ and $\nabla \cdot U_1$ being any linear function on $\R^n$. If $n=2$, $C_B(\B)$ is attained on the whole space $H$, i.e. on any $\tilde U(x)=(|x|^2-1)\nabla \tilde H(x)$ with any harmonic polynomial $\tilde H\in H$.
\end{proposition}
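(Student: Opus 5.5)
The plan is to identify $H$ with a space of harmonic pressures via the bijection $u\mapsto p_u$ of \eqref{eq:comm_1.1}, and then to use completeness of spherical harmonics.

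\textbf{Step 1: the $U_{k,j}$ lie in $H$, with explicit pressures.} From the identities computed just before the statement, $\Delta U_k=\lambda_k\nabla(\nabla\cdot U_k)=\nabla(2k\lambda_k H_k)$. Hence $U_k\in H^1_{0,\sigma}(\B,\R^n)^\perp$, and its associated pressure is
$$p_{U_k}=2k\lambda_k H_k,$$
which has zero mean for $k\ge1$ and is harmonic. By \eqref{charH}, $U_k\in H$. Alternatively, use Proposition \ref{prop:comm_1.3}(d), since $\lambda_k=2+\frac{n-2}{k}\notin\{1,\infty\}$.

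\textbf{Step 2: orthogonality of the system.} Testing \eqref{eq:comm_1.1} with $w=U_{l,i}$ gives
$$\langle U_{k,j},U_{l,i}\rangle_{H^1_0}=\langle p_{U_{k,j}},\nabla\cdot U_{l,i}\rangle_{L^2}=4kl\lambda_k\,\langle H_{k,j},H_{l,i}\rangle_{L^2(\B)}.$$
Integrating in polar coordinates, the last product splits into a radial integral times $\langle H_{k,j},H_{l,i}\rangle_{L^2(\mathbb S^{n-1})}$. This vanishes unless $(k,j)=(l,i)$.

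\textbf{Step 3: completeness in $H$.} Let $u\in H$ be $H^1_0$-orthogonal to every $U_{k,j}$. Using \eqref{eq:comm_1.1} with $p=p_u$ and $w=U_{k,j}$,
$$0=\langle u,U_{k,j}\rangle_{H^1_0}=\langle p_u,\nabla\cdot U_{k,j}\rangle_{L^2}=2k\,\langle p_u,H_{k,j}\rangle_{L^2(\B)}\qquad\text{for all } k\ge1 \text{ and all } j.$$
Also $p_u\in L^2_0$, so $p_u$ is orthogonal to constants, i.e.\ to $\widetilde{\mathcal M}_0$. Since $p_u$ is harmonic in $\B$, it expands locally uniformly as $p_u=\sum_k\sum_j c_{k,j}\,r^kH_{k,j}(x/|x|)$. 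Pairing against $H_{k,j}$ on the ball $B_\rho$, using spherical orthogonality, and letting $\rho\to1$ by dominated convergence (as $p_u\in L^2(\B)$) gives $c_{k,j}=0$. Hence $p_u=0$, and the bijectivity of $u\mapsto p_u$ forces $u=0$.

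This is the step I expect to be the main obstacle: justifying that $L^2$-harmonic functions on $\B$ are determined by their $L^2(\B)$-products with the homogeneous harmonic polynomials. The exhaustion argument via $B_\rho$ with $\rho\to1$ should handle it.

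\textbf{Step 4: value of $C_B(\B)$ and maximisers.} Write $H^1_{0,\sigma}(\B,\R^n)^\perp=W\oplus H$ and take $u=w+\sum c_{k,j}U_{k,j}$ with $w\in W$. By Proposition \ref{prop:comm_1.3}(b), $\langle\nabla\cdot w,\nabla\cdot h\rangle_{L^2}=\langle w,h\rangle_{H^1_0}=0$ for $h\in H$. Moreover, the divergences $\nabla\cdot U_{k,j}=2kH_{k,j}$ are mutually $L^2$-orthogonal by Step 2. Using \eqref{eq:comm_1.5} for each $U_{k,j}$, the quotient in \eqref{eq:comm_1.3}$_1$ becomes
$$\frac{\|\nabla w\|^2+\sum\lambda_k c_{k,j}^2\|\nabla\cdot U_{k,j}\|^2}{\|\nabla\cdot w\|^2+\sum c_{k,j}^2\|\nabla\cdot U_{k,j}\|^2},\qquad \|\nabla w\|=\|\nabla\cdot w\|.$$
This is a weighted average of the values $1$ and $\lambda_k$, so its supremum is $\max_k\lambda_k$.

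For $n\ge3$, $\lambda_k$ is strictly decreasing in $k$, so the maximum is $\lambda_1=n$. It is attained exactly on $\mathrm{span}\{U_{1,j}\}$, i.e.\ with $H_1$ linear. For $n=2$, all $\lambda_k=2$, so $C_B(\B)=2$ is attained on all of $H$.
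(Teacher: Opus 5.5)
Your proposal is correct and follows the paper's proof essentially step by step. You show $U_{k,j}\in H$ via the harmonic pressure $2k\lambda_k H_{k,j}$ and \eqref{charH}, and you get orthogonality by testing the weak Stokes formulation; the paper integrates by parts explicitly, which is the same computation. You prove completeness by showing the harmonic pressure $p_v$ is orthogonal to every $H_{k,j}$ and hence vanishes, and you read off $C_B(\B)=\lambda_1=n$ and the maximisers from the Cosserat eigenvalues on $H^1_{0,\sigma}(\B,\R^n)^\perp=W\oplus H$. Your exhaustion argument in Step 3 and the explicit weighted-average computation in Step 4 spell out what the paper leaves to its citation of Sauvigny and to a one-line appeal to Proposition~\ref{prop:comm_1.3} and \eqref{eq:comm_1.3}.
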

\noindent
\begin{proof}
	We first prove  orthogonality of the system $\left( U_{k,j}\right)_{k\in \N ,\ j=1,\ldots,N(k,n)}$. Let $k\not=\ell$ or $i\not=j$ so that
	$$
	\int_\B H_{k,i} H_{\ell,j}\, dx =0.
	$$
	It follows that
	\begin{align*}
		\langle U_{k,i} , U_{\ell,j} \rangle_{H^1_0(\B)}=&-\int_\B U_{k,i} (\Delta U_{\ell,j})\, dx =-2\ell\lambda_\ell \int_\B U_{k,i} (\nabla H_{\ell,j})\, dx\\
		=& -2\ell\lambda_\ell \int_\B (|x|^2-1)(\nabla H_{k,i})\cdot (\nabla H_{\ell,j})\, dx
		=4 \ell\lambda_\ell \int_\B (x\cdot \nabla H_{k,i})\cdot H_{\ell,j}\, dx\\
		=& 4 k\ell\lambda_\ell \int_\B  H_{k,i}\cdot H_{\ell,j}\, dx=0.
	\end{align*}
	
	Next, we show that, for all $k\in \N$ and $j=1,\ldots,N(k,n)$, we have $U_{k,j}\in H$. To this end, we first observe that $U_{k,j}\in H^1_{0,\sigma}(\B,\R^n)^\perp$ by \eqref{eq:comm_1.5}. Then we recall that $\Delta U_{k,j}=2k\lambda_k \nabla H_{k,j}$ and  \eqref{charH} yields $U_{k,j}\in H$ because $\Delta H_{k,j}=0$ in $\B$.\par
	We now turn to proving the completeness. Take any $v\in H$ such that
	$$
	\forall k\in \N ,\ \forall j=1,\ldots,N(k,n):\quad \langle U_{k,j}, v\rangle_{H^1_0(\B)} =0.
	$$
	We need to prove that $v=0$. Indeed, let $p_v\in L^2_0(\B,\R)$ correspond to $v$ according to \eqref{eq:comm_1.2}.
	Then \eqref{charH}   shows that $\Delta p_v=0$ in $\B$.
	It follows that
	$$
	\forall k\in \N ,\ \forall j=1,\ldots,N(k,n):\quad 0=\langle U_{k,j}, v\rangle_{H^1_0(\B)} =\langle \nabla \cdot U_{k,j}, p_v\rangle_{L^2(\B)} =2k\langle H_{k,j}, p_v\rangle_{L^2(\B)}\, .
	$$
	Since $p_v$ is harmonic, it follows that $p_v=0$; see \cite[Theorem 5.5.III and Proposition 5.5.7]{Sauvigny_PDE_1}.
	This finally yields that $v=0$.
	
	Combining the previous reasoning with Proposition \ref{prop:comm_1.3} and \eqref{eq:comm_1.3}, we see
	in the case $n>2$ that $C_B(\B)$ is the largest eigenvalue $\lambda_1=n$ of \eqref{eq:comm_1.5} on $H^1_{0,\sigma}(\B,\R^n)^\perp=W\oplus H$ of multiplicity $N(1,n)=n$ and that $C_B(\B)$ is attained precisely
on any $U_1$.
If $n=2$, $C_B(\B)$ is attained on the whole infinite dimensional space $H$, since all Cosserat eigenvalues are there equal to $2$.
\end{proof}

\subsection{Ellipsoids}\label{sec:ellipsoids}

From P{\'o}lya \& Szeg{\"o} \cite[Section 1.29]{polya1953isoperimetric}, we learn that a method due to Lord Rayleigh and Ritz consists in modifying an original
variational problem by restricting the space of admissible functions. One possibility is to decide {\em a priori} the shape of their level surfaces. The disadvantage of this approach is that it only gives
an upper bound of the defined variational constant. We focus our attention on {\em similar surfaces} \cite[Section 3.5]{polya1953isoperimetric} that, when $\Omega$ is a ball, restricts the problem to
radial functions.

\begin{theorem}\label{ellipsoid}
	Let $a_j>0$ for $j=1,...,n$ and consider the ellipsoid
	$$
	E\doteq\left\{(x_1,...,x_n)\in\R^n;\ \sum_{j=1}^na_j^2x_j^2<1\right\}.
	$$
	The problem \eqref{Bogdom} admits a solution $u\in H^1_0(E,\R^n)$ of the form
	$$
	u(x)=u\left(\sqrt{\sum_{j=1}^na_j^2x_j^2}\right)
	$$
	if and only if $f\in L^2_0(E,\R)$ has the form
	\begin{equation}\label{formf}
	f(x)=\sum_{i=1}^n f_i\left(\sqrt{\sum_{j=1}^na_j^2x_j^2}\right)x_i\quad\mbox{ for a.e. }x\in E.
	\end{equation}
	In such case, the solution $u$ of \eqref{Bogdom} is unique in this class and its components are given by
	$$
	u_i\left(\sqrt{\sum_{j=1}^na_j^2x_j^2}\right)=-\int_{\sqrt{\sum_{j=1}^na_j^2x_j^2}}^1sf_i(s)\, ds \qquad \forall x \in E,
	\ \ i \in \{1,...,n\}\, .
	$$
	Moreover, this solution of \eqref{Bogdom} coincides with $u_f$, given by Theorem \ref{minBog}, provided that $f\not=0$, $f(x)=\sum^n_{i=1}\gamma_i x_i$, $\gamma_i\in \R$.\par
	Concerning the Bogovskii constant, we have
	\begin{equation}\label{CBE}
	\left( \frac{\max_ia_i}{\min_ia_i}\right)^2<\frac{1}{\min_ia_i^2}\, \sum_{j=1}^{n}a_j^2\le C_B(E)\le n\left(\frac{\max_ia_i}{\min_ia_i}\right)^{2}\, .
	\end{equation}
\end{theorem}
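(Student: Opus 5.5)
The plan is to work throughout with the anisotropic radius $r(x)\doteq\sqrt{\sum_{j}a_j^2x_j^2}$, so that $E=\{r<1\}$, and to reduce every claim to one-variable computations in $r$.

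\textbf{Characterisation of $f$.} For $u_i(x)=U_i(r(x))$ the chain rule gives $\partial_{x_j}u_i=U_i'(r)\,a_j^2x_j/r$. Hence
$$\nabla\cdot u=\sum_{i=1}^n \frac{a_i^2U_i'(r)}{r}\,x_i ,$$
which has the form \eqref{formf} with $f_i(r)=a_i^2U_i'(r)/r$. The factors $a_i^2$ are a normalisation, to be absorbed into the $f_i$ so that the result matches the displayed formula. Conversely, for $f$ of the form \eqref{formf} I would solve $U_i'(s)=s f_i(s)$, up to these normalising factors, with $U_i(1)=0$; this condition is exactly the trace condition $u=0$ on $\partial E=\{r=1\}$. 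That yields the stated integral formula, and the zero mean of $f$ is automatic because each term is odd in $x_i$.

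\textbf{Uniqueness within the class.} I must show that $\sum_i g_i(r)x_i=0$ a.e.\ forces every $g_i=0$. The reflection $x_i\mapsto -x_i$ preserves $r$, so splitting the identity into parts that are odd and even in $x_i$ isolates $g_i(r)x_i=0$. I expect this to be the only delicate step, since it must be done with measurable functions and a.e.\ statements; I would handle it by integrating over the level sets of $r$.

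\textbf{Identification with $u_f$.} Take $f=\sum_i\gamma_ix_i$, so each $f_i$ is constant. Then $u_i=c_i(r^2-1)$ with $c_i=\gamma_i/(2a_i^2)$, and
$$\Delta u=2\Big(\sum_j a_j^2\Big)c ,$$
a constant vector, which equals $\nabla p$ with $p(x)=2\big(\sum_j a_j^2\big)\,c\cdot x$. This $p$ has zero mean on $E$ by symmetry. Thus $(u,p)$ solves the generalised Stokes system \eqref{gstokes}, and Theorem~\ref{minBog} gives $u=u_f$.

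\textbf{Lower bound in \eqref{CBE}.} Let $k$ be an index minimising $a_k$ and choose $f=x_k$. By the previous step $u_f$ has the single nonzero component $u_k=(r^2-1)/(2a_k^2)$. Using $\int_E x_j^2\,dx=|E|/\big((n+2)a_j^2\big)$,
$$\frac{\|\nabla u_f\|_{L^2(E)}^2}{\|f\|_{L^2(E)}^2}=\frac{1}{a_k^2}\sum_{j=1}^n a_j^2 .$$
By \eqref{bogo} this ratio is a lower bound for $C_B(E)$. The strict inequality with $(\max_ia_i/\min_ia_i)^2$ holds because $\sum_j a_j^2\ge \max_i a_i^2+(n-1)\min_i a_i^2$.

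\textbf{Upper bound in \eqref{CBE}.} I would use the linear map $y=Ax$ with $A=\mathrm{diag}(a_1,\ldots,a_n)$, which sends $E$ onto $\B$. Given $f\in L^2_0(E,\R)$, set $\tilde f(y)=f(A^{-1}y)\in L^2_0(\B,\R)$ and let $\tilde u=u_{\tilde f}$ on $\B$; by \eqref{balln},
$$\|\nabla\tilde u\|_{L^2(\B)}^2\le n\,\|\tilde f\|_{L^2(\B)}^2 .$$
Define $u_i(x)=\tilde u_i(Ax)/a_i$. Then $\nabla\cdot u=\sum_i(\partial_i\tilde u_i)(Ax)=f$ and $u\in H^1_0(E,\R^n)$, while
$$\partial_{x_j}u_i=\frac{a_j}{a_i}\,(\partial_j\tilde u_i)(Ax),$$
so $|\nabla u|^2\le(\max_ia_i/\min_ia_i)^2\,|\nabla\tilde u|^2(Ax)$. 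The Jacobian $\det A$ appears in both norms and cancels in the quotient. Since $\|\nabla u_f\|_{L^2(E)}\le\|\nabla u\|_{L^2(E)}$, this gives $C_B(E)\le n(\max_ia_i/\min_ia_i)^2$.
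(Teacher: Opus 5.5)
Your proof is correct and follows the paper's argument step by step. The steps match in order: the chain-rule computation of $\nabla\cdot u$ in the anisotropic radius $r$; the identification $u=u_f$ for linear $f$ because $\Delta u$ is constant, hence the gradient of a zero-mean linear pressure; the explicit ratio $a_k^{-2}\sum_j a_j^2$ for $f=x_k$; and the pull-back of the ball's privileged solution through $\mathrm{diag}(a_i)$ with cancelling Jacobians. Your reflection argument for uniqueness makes explicit a step the paper leaves implicit. One small correction: the factor $a_i^{-2}$ cannot be ``absorbed'' into the $f_i$ without changing \eqref{formf}. The correct formula, used in the paper's proof and in your own $c_i=\gamma_i/(2a_i^2)$, is $u_i(r)=-a_i^{-2}\int_r^1 s f_i(s)\,ds$, so the displayed formula in the statement is simply missing this factor.
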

\noindent
\begin{proof} For $x \in E$, let
	$$r=r(x)=\sqrt{\sum_{j=1}^na_j^2x_j^2}\, .$$
	Assume that $u=u(r)=\big(u_1(r),...,u_n(r)\big)\in H^1_0(E,\R^n)$, then
	$$
	\nabla\cdot u=\sum_{i=1}^{n}\frac{u_i'(r)}{r}\, a_i^2x_i \quad \mbox{in} \quad E\, .
	$$
	This shows that if $u=u(r)$ solves \eqref{Bogdom} for some $f\in L^2_0(E,\R)$, then $f$ has the form \eqref{formf} with
	\begin{equation}\label{explicitui2}
	f_i(r) = \frac{a_i^2 u_i'(r)}{r} \quad \Longrightarrow \quad u_i(r)=-\frac{1}{a_i^2}\int_{r}^1sf_i(s)\, ds \qquad \forall r \in (0,1],
	\ \ i\in\{1,...,n \},
	\end{equation}
	since $u_i(1)=0$. Conversely, if $f\in L^2_0(E,\R)$ has the form \eqref{formf} for some $f_1,...,f_n$, then the vector field
	\begin{equation} \label{solradial2}
	u(r) \doteq \left( -\frac{1}{a_1^2}\int_{r}^1sf_1(s) \, ds, ..., -\frac{1}{a_n^2}\int_{r}^1sf_n(s) \, ds  \right)^T \qquad
	\forall r \in(0,1],
	\end{equation}
	belongs to $H^1_0(E,\R^n)$ and solves \eqref{Bogdom}. In particular, \eqref{explicitui2} proves the uniqueness part of the statement.\par
	If $f(x)=x_i$ for some $i$ (that is, $f_i(r)\equiv1$ and $f_j(r)\equiv0$ for $j\neq i$), then \eqref{explicitui2} gives
	\begin{equation}\label{uifi}
	u_i(r)=\frac{\sum_{j=1}^{n}a_j^2x_j^2-1}{2a_i^2}\, ,\quad u_j(r)\equiv0\mbox{ for }j\neq i\, ,\qquad \forall r \in (0,1]\, ,
	\end{equation}
	which imply that all $\Delta u_i$ are constants and, hence, $\partial_{x_j}\Delta u_i=0$ for any $i,j$ and $u=u_f$. By linearity of
\eqref{Bogdom} and \eqref{ELeq} the same holds for any $f\not=0$ of the form $f(x)=\sum^n_{i=1}\gamma_i x_i$, $\gamma_i\in \R$.

	In order to prove the bounds for $C_B(E)$, we begin with showing the lower bound. This is obtained by taking $f(x)=x_i$ for some $i\in\{1,\ldots,n\}$ and the resulting solution
	\eqref{uifi}. The change of variables $y_j=a_jx_j$ for all $j$, gives the Jacobian $J(E)=\prod_ja_j$ and we obtain
	\begin{align*}
		\int_Ef(x)^2\, dx &= \int_Ex_i^2\, dx=\int_E\frac{a_i^2x_i^2}{a_i^2}\, dx=\frac{1}{a_i^2\, J(E)}\int_\B y_i^2\, dy=
		\frac{1}{na_i^2\, J(E)}\int_\B|y|^2\, dy\\
		&= \frac{\omega_n }{na_i^2\, J(E)}\int_0^1\rho^{n+1}\, d\rho=\frac{\omega_n }{n(n+2)a_i^2\, J(E)}\, ,
	\end{align*}
		where
	$$
	\omega_n  \doteq \dfrac{2 \pi^{n/2}}{\Gamma \left( \dfrac{n}{2} \right)}
	$$
	is the $(n-1)$-dimensional measure of the  unit sphere $\mathbb{S}^{n-1}=\partial \B \subset \R^n$. 
	On the other hand, for the corresponding solution $u$ in \eqref{uifi}, we have
	$$
	\partial_{x_j}u_i(x)=\frac{a_j^2\, x_j}{a_i^2}\mbox{ for }j=1,\ldots,n\ \Longrightarrow\ |\nabla u(x)|^2=
	\frac{1}{a_i^4}\sum_{j=1}^{n}a_j^4x_j^2\, .
	$$
	Hence, through the same change of variables as before,
	$$
	\int_E|\nabla u(x)|^2\, dx =\frac{1}{a_i^4}\sum_{j=1}^{n}a_j^2\int_Ea_j^2x_j^2\, dx
	= \frac{\omega_n }{n(n+2)J(E)\, a_i^4}\, \sum_{j=1}^{n}a_j^2.
	$$
	Therefore,
	\begin{equation}\label{bogratioaffine}
\frac{\int_E|\nabla u|^2\, dx}{\int_E f^2\, dx}=\frac{1}{a_i^2}\, \sum_{j=1}^{n}a_j^2\qquad\forall i \in \{1,...,n\}\, .	
	\end{equation}
	By choosing as index $i$ the one corresponding to the minimum $a_i$, we obtain
	\begin{equation}\label{eq:ellipse_2}
	C_B(E)\ge\frac{1}{\min_ia_i^2}\, \sum_{j=1}^{n}a_j^2\, ,
	\end{equation}
	that is, the stated lower bound.\par
For the upper bound, we make use of $C_B(\B)=n$ and the homothetic change of variables as before. This (and only this) will permit to directly transform also problem \eqref{Bogdom}.
	Take any
	$f\in L^2_0 (E,\R)$ and transform it to the unit ball:
	$$
	\widetilde f\in L^2_0 (\B,\R), \quad \widetilde f(x)\doteq f\left( \frac{x_1}{a_1},\ldots,\frac{x_n}{a_n}\right).
	$$
	We then consider the privileged solution $u_{\widetilde f}\in H^1_0(\B,\R^n)$
	of \eqref{Bogdom}, see Theorem~\ref{minBog}, and \eqref{balln} shows that
	\begin{equation}\label{eq:ellipse_1}
	\| \nabla u_{\widetilde f} \|^2_{L^2(\B)}\le n \| {\widetilde f} \|^2_{L^2(\B)}.
	\end{equation}
	We pull $ u_{\widetilde f}$ back to the ellipsoid by defining
	$$
	u\in H^1_0(E,\R^n), \quad u(x)\doteq \begin{pmatrix}
	1/a_1 & \dots & 0\\
	\vdots & \ddots & \vdots\\
	0 & \dots & 1/a_n
	\end{pmatrix}\cdot u_{\widetilde f} (a_1 x_1,\ldots,a_n x_n)
	$$
	which solves
	$$
	\nabla \cdot u =f\quad \text{in}\quad E,\qquad  u=0 \quad \text{on}\quad \partial E.
	$$
	We observe for $j=1,\ldots,n$ that
	$$
	\nabla u_j(x)= \left(\frac{a_1}{a_j} \frac{\partial u_{\widetilde f,j}}{\partial x_1}(a_1 x_1,\ldots,a_n x_n), \ldots, \frac{a_n}{a_j} \frac{\partial u_{\widetilde f,j}}{\partial x_n}(a_1 x_1,\ldots,a_n x_n)\right) .
	$$
	By the usual change of variables we obtain from \eqref{eq:ellipse_1},
	using $J(E)=\prod_{j=1}^n a_j$ as above
	\begin{align*}
	\| \nabla u \|^2_{L^2(E)}\le& \left( \max_{i,j=1,\ldots,n}\frac{a_i}{a_j}\right)^2
	\int_E |\nabla u_{\widetilde f} (a_1 x_1,\ldots,a_n x_n)|^2\, dx \\
	=&\frac{1}{J(E)}\left( \max_{i,j=1,\ldots,n}\frac{a_i}{a_j}\right)^2
	\| \nabla u_{\widetilde f} \|^2_{L^2(\B)}
	\le n\cdot \frac{1}{J(E)}\left( \max_{i,j=1,\ldots,n}\frac{a_i}{a_j}\right)^2 \| {\widetilde f} \|^2_{L^2(\B)}\\
	=& n \left( \frac{\max_{i=1,\ldots,n}a_i}{\min_{i=1,\ldots,n}a_i} \right)^2 \| { f} \|^2_{L^2(E)}.
	\end{align*}
	Passing to the respective privileged $u_f$ (observe that this will in general be different from $u$) according to Theorem~\ref{minBog} yields
	$$
	\| \nabla u_f \|^2_{L^2(E)}\le n \left( \frac{\max_{i=1,\ldots,n}a_i}{\min_{i=1,\ldots,n}a_i} \right)^2 \| { f} \|^2_{L^2(E)}.
	$$
	Since this holds for any $f\in L^2_0 (E,\R)$ we end up with
	$$
	C_B(E)\le  n \left( \frac{\max_{i=1,\ldots,n}a_i}{\min_{i=1,\ldots,n}a_i} \right)^2.
	$$
	Together with \eqref{eq:ellipse_2}, the claim \eqref{CBE} follows.
\end{proof}

\begin{problem}\label{star}
In order to obtain an upper bound for $C_B(E)$, one could use \cite[(III.3.4)]{galdi2011introduction} and would end up with
$$
C_B(E)\le c(n)\left(\frac{\max_ia_i}{\min_ia_i}\right)^{n+1}\, ,
$$
for some $c(n)>0$ depending only on the dimension $n$, see also \cite[Theorem III.3.1]{galdi2011introduction} for more general domains.
However, we would not obtain the right power of $\frac{\max_ia_i}{\min_ia_i}$, due to the generality of this bound.
In our opinion, the main strength of \eqref{CBE} is that the power is independent of the dimension $n$. We then wonder whether the same holds,
at least for some special domains (e.g., domains starshaped with respect to a ball or the more restricted class of convex domains).
\end{problem}

For $n=2$, it is known \cite[Section 5.1.1]{costabel} that
$$
C_B(E)=\frac{a_1^2+a_2^2}{\min\{a_1^2,a_2^2\}}\, .
$$
This suggests not only that the inequality in \eqref{eq:ellipse_2} may in fact be  an equality in any dimension $n\ge2$, but also yields the following result,
related to Theorem \ref{bogoatttheo}.

\begin{corollary}
	Let $0<a_1\le a_2$ and consider the (planar) ellipse
	$$
	E\doteq\left\{(x_1,x_2)\in\R^2;\ a_1^2x_1^2+a_2^2x_2^2<1\right\}.
	$$
	Then the supremum in \eqref{bogo} is a maximum and it is attained by real multiples of the function
	$$f(x)= x_1.$$
\end{corollary}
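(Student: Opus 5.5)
The plan is to combine the known value of $C_B(E)$ for planar ellipses, quoted just before the statement from \cite[Section 5.1.1]{costabel}, with the explicit computation in the proof of Theorem~\ref{ellipsoid}. Since $0<a_1\le a_2$, we have $\min\{a_1^2,a_2^2\}=a_1^2$, so the cited formula gives
$$C_B(E)=\frac{a_1^2+a_2^2}{a_1^2}.$$
The task is therefore to exhibit an $f$ whose Rayleigh quotient in \eqref{bogo} equals this number. Once that is done, the supremum is attained and hence is a maximum.

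Take $f(x)=x_1$. By symmetry of $E$ it has zero mean, so $f\in L^2_0(E,\R)$. By the proof of Theorem~\ref{ellipsoid}, formula \eqref{uifi} with $i=1$ gives the solution
$$u=\Big(\frac{a_1^2x_1^2+a_2^2x_2^2-1}{2a_1^2},\,0\Big),$$
and the theorem asserts that $u=u_f$ is the privileged solution of Theorem~\ref{minBog}. The reason is that $\Delta u$ is constant, so $\nabla p=\Delta u=0$ with $p=0$ solves \eqref{gstokes}; equivalently, \eqref{ELeq} holds by Lemma~\ref{lem:lemma1}. Identity \eqref{bogratioaffine} with $i=1$ then yields
$$\frac{\|\nabla u_f\|^2_{L^2(E)}}{\|f\|^2_{L^2(E)}}=\frac{a_1^2+a_2^2}{a_1^2}=C_B(E),$$
so $f$ attains the supremum. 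Because $f\mapsto u_f$ is linear (Theorem~\ref{minBog}) and the quotient is homogeneous of degree zero, every nonzero real multiple $cx_1$ attains it as well.

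The main obstacle is that everything rests on the cited exact value of $C_B(E)$. Without it, \eqref{CBE} only gives a lower bound, so the step that actually requires external input is the upper bound $C_B(E)\le(a_1^2+a_2^2)/a_1^2$; I would simply invoke \cite{costabel} for it. Note also that the statement says ``real multiples'' without claiming the maximisers are exactly these: when $a_1<a_2$ one could expect uniqueness from the spectral picture, and when $a_1=a_2$ (a disc) the set of maximisers is much larger by Proposition~\ref{thm:comm_1.4}. So I would only prove that multiples of $x_1$ are maximisers, which is what the statement requires.
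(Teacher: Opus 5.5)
Your proposal is correct and is the paper's own (implicit) argument: the Costabel--Dauge value $C_B(E)=(a_1^2+a_2^2)/a_1^2$ from \cite[Section 5.1.1]{costabel} coincides with the ratio \eqref{bogratioaffine} for $i=1$, and the field \eqref{uifi} is $u_f$ by Theorem~\ref{ellipsoid}. One slip in your justification of $u=u_f$: $\Delta u=\big((a_1^2+a_2^2)/a_1^2,0\big)$ is a nonzero constant vector, so the pressure in \eqref{gstokes} is $p=\frac{a_1^2+a_2^2}{a_1^2}\,x_1=C_B(E)\,f$ (exactly as in \eqref{gstokesschur}), not $p=0$, and your argument via Lemma~\ref{lem:lemma1} is the valid one.
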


\begin{remark}\label{rem:thin_rectangles}
The lower bound in \eqref{CBE} shows that a sequence $E_m$ of ellipsoids with axes $a_i^m$ satisfying $\max_ia_i^m/\min_ia_i^m\to\infty$,
is such that $C_B(E_m)\to\infty$. This suggests that ``thin domains'' $\Omega\subset\R^n$ (with small inradius and large outradius) might have
a large Bogovskii constant $C_B(\Omega)$.
However, in ``thin ellipses''  \eqref{bogratioaffine} also shows (by choosing $a_i$ maximal) that for suitable pairs $(f,u_f)$ one may still have ``Bogovskii ratios'' close to $1$.\par
The function $w(x_1,x_2)=\sin(x_1)\left(1-\frac{\sinh(x_2)+\sinh(h-x_2)}{\sinh(h)}\right)$
is the unique solution to
$$
-\Delta w=\sin(x_1)\quad\mbox{in }(0,\pi)\times(0,h)\, ,\qquad w=0\quad\mbox{on }\partial\Big((0,\pi)\times(0,h)\Big)\, .
$$
By arguing as for deducing \eqref{eq:bogovskii_rectangle} in Example \ref{Example_3.1}, we find for $h\in (0,\pi)$ that

\begin{equation}\label{eq:asymptotics_rectangle}
C_B\big((0,\pi)\times (0,h)\big)\ge
\frac{ h\cosh(h)+h -2\sinh(h)}{ h\cosh(h)+2h -3\sinh(h)}
\stackrel{h\to0}{=}\frac{10}{h^2}(1+O(h^2))\to\infty,
\end{equation}
thereby supporting the conjecture that the Bogovskii constant of ``thinning'' domains tends to infinity.
Chizhonkov-Olshanskii  prove in \cite[Section 2, Equation
(30)]{Chizhonkov_Olshanskii_2000}:
\begin{equation*}
\frac{12}{h^2}\le C_B\big((0,\pi)\times (0,h)\big)\le \frac{60\pi^2}{h^2}.
\end{equation*}
These bounds were obtained by means of the Schur complement of the Stokes operator \eqref{Schur_complement} and the authors themselves call their approach ``rather technical''.
Our lower bound \eqref{eq:asymptotics_rectangle} is derived in a more elementary way: it is not as precise but it has the same order of magnitude.
\end{remark}

\subsection{Annuli}\label{sec:annuli}

Given $0 < r < R$, we consider the annular region $\mathcal{A}_r^R=\{x\in\R^n;\, r<|x|<R\}$.
Due to its multiple symmetries, we cannot use the torsion problem \eqref{torsionpb} in order to show that $C_B\big(\mathcal{A}_r^R\big)>n$, namely that an annulus does not achieve the minimal Bogovskii constant.
Also finding functions $g$ or $h$ satisfying \eqref{generalTorsion} or \eqref{hsum} appears to be a difficult task. This is why we follow a different procedure. Since the planar annulus ($n=2$) is the only non-simply connected $n$-dimensional annulus and since in 2D we may exploit the celebrated
Friedrichs inequality \cite{friedrichs1937certain}, we give several variants of the proof. We end up with the following statement.

\begin{theorem}\label{thm:comm_4.1}
Let $n\ge2$ and $\mathcal{A}_r^R=\{x\in\R^n;\, r<|x|<R\}$. For all $0<r<R<\infty$ we have
\begin{equation}\label{eq:comm_4.3}
C_B(\mathcal{A}_r^R)>n\, .
\end{equation}
Moreover, $\displaystyle\lim_{R\downarrow r}C_B(\mathcal{A}_r^R)=\infty$ with the following lower bound
\begin{equation}\label{asymptoticannulus}
\liminf_{R\downarrow r}\left(\frac{R}{r}-1\right)^2\, C_B(\mathcal{A}_r^R)\ge\frac{3}{n-1}\, .
\end{equation}	
\end{theorem}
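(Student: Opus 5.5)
The plan is to use the first characterisation in \eqref{eq:comm_1.3}. Any explicit $u\in H^1_{0,\sigma}(\mathcal{A}_r^R,\R^n)^\perp\setminus\{0\}$, i.e.\ any $u$ satisfying the Euler--Lagrange equation \eqref{ELeq}, gives the lower bound $C_B(\mathcal{A}_r^R)\ge\|\nabla u\|^2_{L^2}/\|\nabla\cdot u\|^2_{L^2}$. By Proposition~\ref{prop:comm_1.3}, such fields are exactly pairs with $-\Delta u+\nabla p=0$, and the interesting ones (eigenvalues $>1$) have $p$ harmonic. Radial fields $\phi(\rho)x/\rho$ are gradients of radial functions, so they lie in $W$ and only give ratio $1$; they are useless. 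I will therefore work in the first spherical-harmonic sector, which is where the maximiser lives for the ball in Proposition~\ref{thm:comm_1.4}. Take the harmonic pressure
\[
p(x)=x_1\big(\alpha+\beta\rho^{-n}\big),\qquad \rho=|x|,
\]
which is weakly harmonic in $\mathcal{A}_r^R$ and has zero mean by oddness. Let $u=V_p$ solve $-\Delta u+\nabla p=0$, $u=0$ on $\partial\mathcal{A}_r^R$. Then $u\in H$ by \eqref{charH}, and $\|\nabla u\|^2=\int p\,\nabla\cdot u$, so the ratio equals $\langle p,\mathcal{S}p\rangle^{-1}\|p\|^2\cdot\|\mathcal{S}p\|^{-2}\langle p,\mathcal{S}p\rangle^{2}/\|p\|^2$. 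It is simpler to just compute $u$ and its divergence directly.

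To compute $u$ explicitly, use the ansatz
\[
u(x)=A(\rho)\,e_1+B(\rho)\,x_1\,x,
\]
which is the general form of a vector field in the degree-one sector. The equation $\Delta u=\nabla p$ then reduces to a linear system of Euler-type ODEs for $A,B$. Its solutions are combinations of powers $\rho^{2},\rho^{0},\rho^{2-n},\rho^{-n}$ (plus logarithms when $n=2$), together with particular solutions driven by $\alpha,\beta$. The four boundary conditions $A=B=0$ at $\rho=r,R$ fix the homogeneous coefficients linearly in $(\alpha,\beta)$. Once this is done, $\|\nabla u\|^2$ and $\|\nabla\cdot u\|^2$ become explicit quadratic forms in $(\alpha,\beta)$ with coefficients depending on $r,R,n$. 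Maximising their ratio over $(\alpha,\beta)$ is a $2\times2$ generalised eigenvalue problem. Its larger eigenvalue is a lower bound for $C_B(\mathcal{A}_r^R)$; in fact it is the top Cosserat eigenvalue in this sector.

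For \eqref{eq:comm_4.3}, a cleaner route may be to take $\beta=0$, i.e.\ $p=x_1$. For the ball this reproduces $U_1$ with ratio exactly $n$ (Proposition~\ref{thm:comm_1.4}). The claim is that on the annulus the extra inner boundary condition forces the ratio strictly above $n$. To show this, compare with the ball of radius $R$ via the strict convexity in Theorem~\ref{minBog}. Extending $u$ by zero to $B_R$ produces a competitor for $f=\nabla\cdot u$. If the ratio were $\le n$, then $C_B(B_R)=n$ would force the zero extension to coincide with the privileged ball solution for that $f$. That solution is real-analytic in $B_R$ because the Stokes system \eqref{gstokes} is elliptic and $f$ is analytic away from $\partial B_r$, so it cannot vanish identically on $B_r$. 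I would rather avoid this and simply verify the strict inequality directly from the explicit formula.

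For \eqref{asymptoticannulus}, set $R=r(1+\varepsilon)$ and let $\varepsilon\to0$, choosing $(\alpha,\beta)$ so that $p$ is essentially $x_1/\rho$ times a function varying linearly across the shell; this is the combination that is $O(\varepsilon)$-small in mean across each fibre. In the thin-layer limit $u$ becomes a Poiseuille-type profile in the normal variable $t\in(0,\varepsilon r)$: tangential velocity $\sim t(\varepsilon r-t)$ driven by the tangential pressure gradient. The resulting ratio behaves like $\int|\partial_tu|^2\big/\int|\nabla_{\mathbb{S}}\cdot u|^2$, and the factor $3$ comes from the Poiseuille constant $\int_0^1 (1-2s)^2/\big(\int_0^1 s(1-s)\big)^2$ type quotient. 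The factor $n-1$ is the eigenvalue of the Laplace--Beltrami operator on $\mathbb{S}^{n-1}$ for degree-one harmonics, which enters through the tangential divergence. The main obstacle is exactly this asymptotic bookkeeping. The first-order terms must cancel (the zero flux across fibres), so the ratio has to be expanded to second order in $\varepsilon$ to extract $3/((n-1)\varepsilon^2)$; the case $n=2$ needs separate logarithmic terms. If the explicit route becomes too heavy, I would instead use only the leading-order Poiseuille field as a test field for the dual Nečas quotient in \eqref{eq:comm_1.3}$_2$. There I would bound $\|\nabla f\|_{H^{-1}}$ from above by exhibiting an admissible matrix field $G$ with $\nabla\cdot G=\nabla f$, since $\|\nabla f\|_{H^{-1}}\le\|G\|_{L^2}$; a $\liminf$ inequality is all that is required.
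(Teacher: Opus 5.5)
Your framework (a degree-one harmonic pressure $p=x_1(\alpha+\beta\rho^{-n})$, $u=V_p\in H$, and the quotient $\langle\mathcal S p,p\rangle_{L^2}/\|\mathcal S p\|^2_{L^2}$) is essentially the paper's for $n\ge3$. There the paper takes $r=1$, $\alpha=n-1$, $\beta=1$; for $n=2$ it uses the conjugate pair $z+1/z$ instead. However, the step you commit to for the strict inequality \eqref{eq:comm_4.3} fails. For $\beta=0$ one has $V_{x_1}=w\,e_1$, where $w\in H^1_0(\mathcal A_r^R)$ is the \emph{radial} solution of $\Delta w=1$. Hence $\|\nabla V_{x_1}\|^2=\|\nabla w\|^2=n\|\partial_{x_1}w\|^2$, so the quotient equals exactly $n$ on every annulus; the inner boundary condition does not change this. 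This is why the paper says the torsion function is useless for annuli.

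The comparison with $B_R$ is logically backwards. The zero extension of $u$ is only a competitor for $\bar f$ in $B_R$, so it gives $\|\nabla u^{B_R}_{\bar f}\|^2\le\|\nabla u\|^2\le n\|\bar f\|^2$, which is perfectly consistent with $C_B(B_R)=n$. To force coincidence you would need $\bar f$ to be a maximiser in $B_R$, and nothing provides that (cf.\ Section~\ref{sec:nonmonotonicity}). So nothing in the proposal proves $C_B>n$.

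You need $\beta\neq0$, and there are two ways to use it. First, you can compute with the paper's pressure $(n-1+|x|^{-n})x_1$ on $\mathcal A_1^{R/r}$. Its normal derivative vanishes on the inner sphere, and it reduces \eqref{eq:comm_4.3} to $(n-1)s^n-ns^{n-2}+1>0$ for $s=R/r>1$. Second, you can stay in your $2\times2$ setting:
\begin{itemize}
\item The plane $P=\mathrm{span}\{x_1,x_1\rho^{-n}\}$ is $\mathcal S$-invariant. Indeed $\nabla\cdot V_p$ is harmonic and, by $O(n)$-equivariance, again of the form $x_1h(\rho)$, hence in $P$.
\item $\mathcal S(x_1)=\partial_{x_1}w=x_1\big(\tfrac1n+(2-n)a\rho^{-n}\big)$, resp.\ $x_1\big(\tfrac12+a\rho^{-2}\big)$ for $n=2$, with $a\neq0$ because $w$ vanishes on two spheres.
\item So $x_1$ is not an eigenvector of the self-adjoint operator $\mathcal S|_P$, and its eigenvalues $\mu_1<\mu_2$ are distinct.
\item The quotient equals $n$ at $p=x_1$, hence is strictly below its maximum $1/\mu_1\le C_B(\mathcal A_r^R)$ on $P$.
\end{itemize}

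For \eqref{asymptoticannulus}, your heuristic points the wrong way. A pressure with $O(\varepsilon)$-small mean across the fibres has $\partial_\rho p=O(1)$. This drives an essentially normal flow $u\approx u_\nu\,x/\rho$ with $\nabla\cdot u\approx\partial_\rho u_\nu$, and the quotient stays $O(1)$. The tangential Poiseuille regime you describe requires $p$ to be nearly constant across the fibres. That means $\partial_\rho p=0$ on $|x|=r$, i.e.\ $\alpha r^n=(n-1)\beta$, which is precisely the paper's Zhukovsky-like choice. Moreover, the quotient $\int_0^1(1-2s)^2\,ds/(\int_0^1s(1-s)\,ds)^2$ equals $12$, not $3$; $12$ is the Chizhonkov--Olshanskii limit for $n=2$ (Remark~\ref{2nd}). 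The $3/(n-1)$ in the statement is simply what de l'H\^opital's rule gives on \eqref{eq:comm_4.4} for the paper's specific, non-optimal pressure.

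Maximising your full $2\times2$ problem would dominate the paper's choice and so would suffice. Your dual bound $\|\nabla f\|_{H^{-1}}\le\|G\|_{L^2}$ for $\nabla\cdot G=\nabla f$ is also sound. But both routes need exactly the explicit second-order expansion you leave undone, so as it stands the asymptotic bound is not proved.
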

\noindent
\begin{proof} We give several different proofs, depending on the dimension $n$ and on the tools that we use.\par
\underline{First proof of \eqref{eq:comm_4.3} when $n=2$}. From \cite{friedrichs1937certain} we know that there exists
$C_{F}(\mathcal{A}_r^R) > 0$ such that
$$
\int_{\mathcal{A}_r^R} |u|^2 \leq C_{F}(\mathcal{A}_r^R) \int_{\mathcal{A}_r^R} |v|^2 \, ,
$$
for every pair of conjugate harmonic functions $u, v \in L^2_0(\mathcal{A}_r^R,\R)$. Moreover, from \cite[Theorem 2.1]{costabel} we know that
\begin{equation}\label{fried}
C_{F}(\mathcal{A}_r^R) = C_{B}(\mathcal{A}_r^R) - 1 \, .
\end{equation}
Hence, \eqref{eq:comm_4.3} follows if we prove that $C_{F}(\mathcal{A}_r^R)>1$.
Since from \eqref{fried} and Theorem \ref{lowerbound} we know that $C_{F}(\mathcal{A}_r^R)\geq1$, we can argue by contradiction assuming that $C_{F}(\mathcal{A}_r^R) = 1$, namely
\begin{equation} \label{contra}
\int_{\mathcal{A}_r^R} |u|^2 \leq \int_{\mathcal{A}_r^R} |v|^2 \, ,
\end{equation}
for every pair of conjugate harmonic functions $u, v \in L^2_0(\mathcal{A}_r^R,\R)$. Considering the holomorphic function
\begin{equation}\label{zhuk}
\Phi(z) \doteq z + \dfrac{1}{z} \qquad \forall z=(x_1,x_2)\in \overline{\mathcal{A}_r^R} \, ,\qquad\Phi=f+ig\, ,
\end{equation}
we deduce that $f,g\in L^2_0(\mathcal{A}_r^R,\R)$ defined by
\begin{equation}\label{realimaginary}
f(x)=\left( 1 + \dfrac{1}{|x|^2} \right) x_1 \qquad \text{and} \qquad g(x)=\left( 1 - \dfrac{1}{|x|^2} \right) x_2 \qquad \forall (x_1,x_2) \in \overline{\mathcal{A}_r^R} \, ,
\end{equation}
are conjugate harmonic functions. Then, a straightforward computation yields
\begin{equation}\label{eq:comm_2.1}\\
\int_{\mathcal{A}_r^R} \left( |f|^2 - |g|^2 \right) = 2 \pi (R^2 - r^2) > 0 \, ,
\end{equation}
which contradicts \eqref{contra} and proves \eqref{eq:comm_4.3} when $n=2$.\par
\underline{Second proof of \eqref{eq:comm_4.3} when $n=2$}. We give here an alternative proof of \eqref{eq:comm_4.3} {\em without}
using explicitly the results by Friedrichs \cite{friedrichs1937certain}. It has the advantage of introducing the tools to prove \eqref{asymptoticannulus},
see the next item. We consider again the real and imaginary parts \eqref{realimaginary} of \eqref{zhuk}
and the corresponding privileged solution $u_f \in H^1_{0,\sigma}(\mathcal{A}_r^R,\R^2)^\perp$ of
$$
\nabla \cdot u_f=f \quad \mbox{in}\quad \mathcal{A}_r^R, \qquad u_f=0
 \quad \mbox{on}\quad \partial \mathcal{A}_r^R.
$$
We compute
\begin{align}
\| g\|^2_{L^2(\mathcal{A}_r^R)}=&\int_{\mathcal{A}_r^R} \left(1-\frac{1}{|x|^2} \right)^2x_2^2\, dx
=\frac{1}{2}\int_{\mathcal{A}_r^R} \left(1-\frac{1}{|x|^2} \right)^2|x|^2\, dx\nonumber \\
=&\pi \int^R_r \left(\rho^3-2\rho+\frac{1}{\rho}\right)\, d\rho
=\frac{\pi}{4}(R^2-r^2)\left( R^2+r^2-4+4\frac{\log (R/r)}{R^2-r^2}\right)\label{eq:comm_2.2}
\end{align}
and, by making use of the Cauchy-Riemann differential equations for $f+ig$ and the Cauchy-Schwarz inequality, we estimate
\begin{align*}
\| f\|^2_{L^2(\mathcal{A}_r^R)}=& \int_{\mathcal{A}_r^R} f \cdot (\nabla \cdot u_f)\, dx
=-\int_{\mathcal{A}_r^R} \nabla f \cdot   u_f\, dx
=-\int_{\mathcal{A}_r^R} (\partial_2 g, -\partial_1 g)^T \cdot   u_f\, dx\\
=&\int_{\mathcal{A}_r^R} g(\partial_2 u_{f,1} -\partial_1 u_{f,2}) \, dx
\le \| g\|_{L^2(\mathcal{A}_r^R)} \, \| \partial_2 u_{f,1} -\partial_1 u_{f,2} \|_{L^2(\mathcal{A}_r^R)}.
\end{align*}
For the second factor, we calculate:
\begin{align*}
\| \partial_2 u_{f,1} -\partial_1 u_{f,2} \|^2_{L^2(\mathcal{A}_r^R)} =&
\int_{\mathcal{A}_r^R} \left( (\partial_2 u_{f,1})^2+(\partial_1 u_{f,2})^2-2(\partial_2 u_{f,1}\cdot \partial_1 u_{f,2})\right)\, dx\\
=&\int_{\mathcal{A}_r^R} | \nabla   u_f|^2\, dx
-\int_{\mathcal{A}_r^R} \left( (\partial_1 u_{f,1})^2+(\partial_2 u_{f,2})^2+2(\partial_1 u_{f,1}\cdot \partial_2 u_{f,2})\right)\, dx\\
=&\int_{\mathcal{A}_r^R} | \nabla   u_f|^2\, dx
-\int_{\mathcal{A}_r^R} \left(\nabla \cdot u_f\right)^2\, dx
\le (C_B(\mathcal{A}_r^R)-1)\| f\|_{L^2(\mathcal{A}_r^R)}^2.
\end{align*}
Combining the previous estimates yields:
$$
\| f\|^2_{L^2(\mathcal{A}_r^R)}\le \sqrt{(C_B(\mathcal{A}_r^R)-1)}\, \| f\|_{L^2(\mathcal{A}_r^R)}\| g\|_{L^2(\mathcal{A}_r^R)}\quad\Longrightarrow\quad \| f\|^2_{L^2(\mathcal{A}_r^R)}\le (C_B(\mathcal{A}_r^R)-1)\| g\|^2_{L^2(\mathcal{A}_r^R)}.
$$
Taking \eqref{eq:comm_2.1} into account we come up with
\begin{equation}\label{lowerbound2}
C_B(\mathcal{A}_r^R)\ge 2 +\frac{2\pi(R^2-r^2)}{\| g\|^2_{L^2(\mathcal{A}_r^R)}}
\end{equation}
which proves (again) \eqref{eq:comm_4.3} when $n=2$.\par
\underline{Proof of \eqref{asymptoticannulus} when $n=2$}. By combining \eqref{eq:comm_2.2} with \eqref{lowerbound2} yields
$$
C_B(\mathcal{A}_r^R)\ge 2 +\frac{8}{ R^2+r^2-4+4\frac{\log (R/r)}{R^2-r^2} }.
$$
While the left hand side is scaling invariant, the right hand side is not. Apparently, any choice of $R>r>0$ seems equivalent, but this is not
the case. The only choice being able to emphasise the singular behaviour of $C_B(\mathcal{A}_1^R)$ is fixing $r=1$; then, we obtain
\begin{equation}\label{eq:comm_2.4}
C_B(\mathcal{A}_1^R)\ge 2 +\frac{8}{ R^2-3+4\frac{\log (R)}{R^2-1} }.
\end{equation}
Making use of the inequality $\log(1+x)\le x-\tfrac{x^2}{2}+\tfrac{x^3}{3}$ (for all $x\ge0$), we then find
$$
C_B(\mathcal{A}_1^R)\ge 2 +\frac{8}{ R^2-3+2\frac{\log (1+(R^2-1))}{R^2-1} }\ge 2+\frac{12}{(R^2-1)^2}.
$$
At this point, we may use scaling invariance (Lemma \ref{invariance}) to obtain for any $0<r<R$:
\begin{equation}\label{eq:comm_2.3}
C_B(\mathcal{A}_r^R)=C_B(\mathcal{A}_1^{R/r})\ge 2+\frac{12}{\left(\frac{R^2}{r^2}-1\right)^2}=2+\frac{12}{\left(\frac{R}{r}+1\right)^2\left(\frac{R}{r}-1\right)^2}.
\end{equation}
By letting $R\downarrow r$, this completes the proof of \eqref{asymptoticannulus} when $n=2$.\par
\underline{The case $n\ge3$.}
Let $\rho=R/r>1$;
by Lemma \ref{invariance}, we know that $C_B(\mathcal{A}_r^R)=C_B(\mathcal{A}_1^\rho)$ and, hence, we may restrict the
attention to this case. Inspired by \eqref{realimaginary}, we then consider the harmonic function
$$
\tilde f\in L^2_0 (\mathcal{A}_1^\rho,\R),\quad
\tilde f(x)=\left(n-1+\frac{1}{|x|^n} \right)x_1,\quad
$$
and $u\in H^1_0 (\mathcal{A}_1^\rho,\R^n)$ as the unique solution to $\Delta u=\nabla \tilde f$,
which satisfies $u\in H\subset H^1_{0,\sigma}(\mathcal{A}_1^\rho,\R^n)^\perp$, see \eqref{eq:comm_1.2}-\eqref{eq:comm_1.1}.
After putting $f\doteq \nabla\cdot u$, we see that $u=u_{f}$, as in Theorem \ref{minBog}. We find
 (here we need $n\ge3$):
\begin{align}
u_1(x)\doteq& x_1^2 \left( \frac{1}{2|x|^n}-\frac{c}{|x|^{n+2}}-d\right)
+\frac{n-1}{2n}|x|^2+a|x|^{2-n}-b+\frac{c}{n|x|^n}+\frac{d}{n}|x|^2\label{eq:comm_4.1}\\
u_j(x)\doteq& x_1x_j \left( \frac{1}{2|x|^n}-\frac{c}{|x|^{n+2}}-d\right)
\qquad (j=2,\ldots,n)
\label{eq:comm_4.2}
\end{align}
with
\begin{align*}
a=\frac{n-1}{2n}\cdot \frac{\rho^n-\rho^{n-2}}{\rho^{n-2}-1}-\frac{1}{2n},\quad
b=\frac{n-1}{2n}\cdot \frac{\rho^n-1}{\rho^{n-2}-1},\quad c=\frac{\rho^{n+2}-\rho^2}{2(\rho^{n+2}-1)},\quad
d=\frac{\rho^2-1}{2(\rho^{n+2}-1)},
\end{align*}
so that the homogeneous boundary conditions are satisfied. We then compute
\begin{align*}
f(x)=\frac{n-1}{n}\left(1-\frac{n+2}{2}\cdot\frac{\rho^2-1}{\rho^{n+2}-1} \right)x_1
+\frac{n-1}{n}\left(1-\frac{n-2}{2}\cdot\frac{\rho^n-\rho^{n-2}}{\rho^{n-2}-1}\right)\frac{x_1}{|x|^n}=\frac{n-1}{n}\left(C+\frac{D}{|x|^n}\right) \! x_1,
\end{align*}
where
$$
C=1-\frac{n+2}{2}\cdot\frac{\rho^2-1}{\rho^{n+2}-1},\qquad D=1-\frac{n-2}{2}\cdot\frac{\rho^n-\rho^{n-2}}{\rho^{n-2}-1}.
$$
We now determine a lower bound for the Bogovskii constant of the annuli $\mathcal{A}_1^\rho$:
\begin{align}
C_B(\mathcal{A}_r^R)=\, &C_B(\mathcal{A}_1^\rho)\ge \frac{\| u_f \|^2_{H^1_0(\mathcal{A}_1^\rho)}}{\|f \|^2_{L^2(\mathcal{A}_1^\rho)}}=\frac{\langle f,\tilde f \rangle_{L^2(\mathcal{A}_1^\rho)}}{\|f \|^2_{L^2(\mathcal{A}_1^\rho)}}\nonumber\\
=\, &\frac{n}{n-1}\cdot \frac{
\frac{n-1}{n+2}C(\rho^{n+2}-1)
+\frac12\left(C+(n-1)D\right)(\rho^2-1)
+\frac{1}{n-2}D(1-\rho^{2-n})
}{
\frac{1}{n+2}C^2(\rho^{n+2}-1)
+C\,D\,(\rho^2-1)
+\frac{1}{n-2}D^2(1-\rho^{2-n})
}\nonumber\\
=&\frac{n}{n-1}\cdot \frac{
(n-1)(n-2)\rho^{n-2}(\rho^{n+2}-1)	+ (n+2)(\rho^{n-2}-1)
}{
(n-2)\rho^{n}(\rho^{n}-1)-(n+1)(n-2)\rho^{n-2}(\rho^{2}-1)+ (n+2)(\rho^{n-2}-1)}.\label{eq:comm_4.4}
\end{align}
The last term in \eqref{eq:comm_4.4} is strictly greater than $n$ if and only if $(n-1)\rho^n-n\rho^{n-2}+1>0$ which is always
true since $\rho>1$. This proves \eqref{eq:comm_4.3} when $n\ge3$.\par
To obtain \eqref{asymptoticannulus} we apply de L'H\^opital's
rule to \eqref{eq:comm_4.4} and we recall that $\rho=R/r$.\end{proof}

\begin{remark} \label{airfoil}
	The Zhukovsky transform \eqref{zhuk} can be seen as a complex-valued function, defined in $\mathbb{C} \setminus \{0\}$. It
	is a conformal function that maps circles into airfoils, see \cite[Chapter 4]{acheson1990elementary} for further details. Figure \ref{airfoil} illustrates how the unit circle,
	centered at the point $(1/5, 1/5)$, is transformed into an airfoil with trailing edge located at $(-2,0)$.
	\begin{figure}[h] \label{airfoilfig}
		\begin{center}
			\includegraphics[scale=0.46]{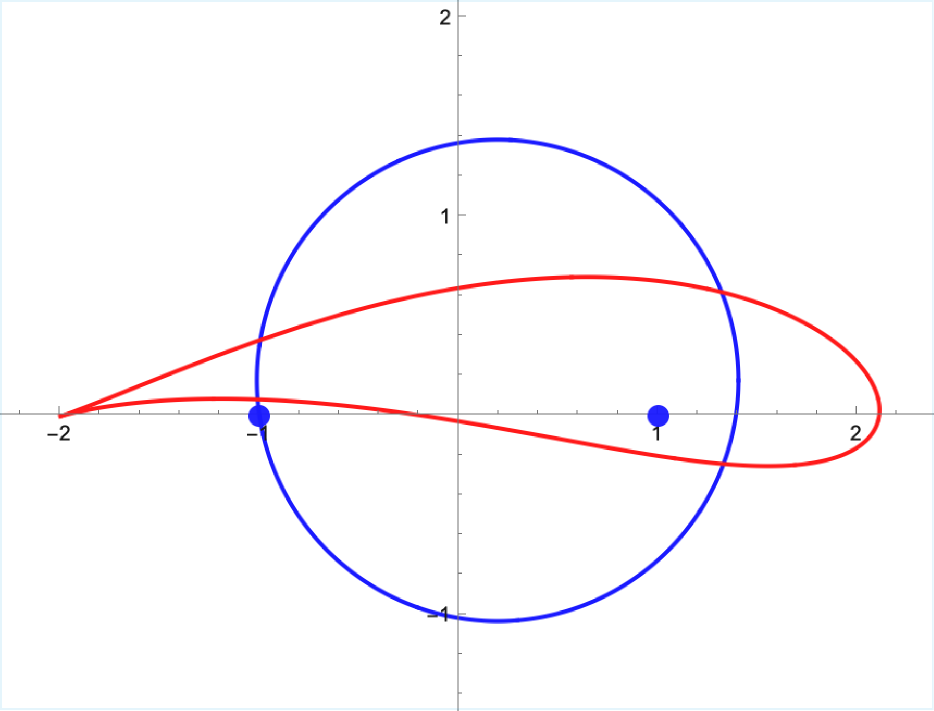}	
		\end{center}
		\vspace{-5mm}
		\caption{Geometric description of the Zhukovsky transform.}
	\end{figure}
\end{remark}

\begin{remark}\label{2nd}
Only for the 2D (non-simply connected) annulus, by using again (see Remark \ref{rem:thin_rectangles}) the eigenvalues of the Schur complement operator, Chizhonkov-Olshanskii
\cite[Corollary 3.3]{Chizhonkov_Olshanskii_2000} prove that
$$
C_B(\mathcal{A}_r^R)=\frac{1}{\mu(\mathcal{A}_r^R)^2}\ge \frac{24}{7}\frac{1}{\left(\frac{R}{r}-1\right)^2}\quad\forall R>r>0\, ,\qquad
\lim_{R\downarrow r}\left(\frac{R}{r}-1\right)^2\, C_B(\mathcal{A}_r^R)=12\, .
$$
Asymptotically, these estimates are qualitatively of the same order but quantitatively stronger and more precise than
\eqref{asymptoticannulus}. On the other hand, they require extensive calculations in polar coordinates and the strict lower bound
$2$ is by no means obvious. Not only our proof includes any dimension $n\ge2$ but also  appears to be more direct.
\end{remark}

\section{Higher-order regularity for the divergence equation}\label{polyBogo}

\subsection{Variational characterisation, non-minimality criterion and attainment}
Let $\Omega\subset\R^n$ be a bounded domain of class ${\mathcal C}^{m+1,1}$. From Theorem \ref{minBog} we know that
$$f\in H^m(\Omega, \mathbb{R})\cap L^2_0(\Omega, \mathbb{R})\ \Longrightarrow\ u_f\in H^{m+1}(\Omega, \mathbb{R}^{n})\cap H^1_0(\Omega, \mathbb{R}^{n})\, .$$
In the present section, we maintain the mere assumption of $\partial\Omega$ being Lipschitz and we gain regularity by increasing the number
of null traces of $f$. Of course, the resulting privileged solution $u_f$ will change.\par
To be precise, let $m\in\N$.  Given $u,v \in H_0^m(\Omega, \mathbb{R})$, the bilinear form
\begin{equation} \label{altranorma}
	(u,v)_{m,\Omega}\mapsto
	\left\{
	\begin{aligned}
		& \int_\Omega (\Delta^k u) (\Delta^k v) \, dx \quad \mbox{if} \ \ m=2k \, ,\\[6pt]
		& \int_\Omega\nabla(\Delta^k u)\cdot\nabla(\Delta^kv)\, dx\quad \mbox{if} \ \ m=2k+1 \, ,
	\end{aligned}
	\right.
\end{equation}
defines a scalar product on $H_0^m(\Omega, \mathbb{R})$ and this scalar product induces a norm equivalent to the $H^m(\Omega, \mathbb{R})$-norm, see  \cite[Theorem 2.2]{GGSbook};
in the sequel  $\|\cdot\|_{m,\Omega}$ denotes  this norm (in a completely similar way we can define the bilinear form \eqref{altranorma} on the space of vector fields in $H_0^m(\Omega, \mathbb{R}^{n})$).\par
From the work of Bogovskii \cite{bogovskii1979solution}, see also \cite[Theorem III.3.3]{galdi2011introduction} and \cite{geissert2006equation}, we
know that for any function $f\in H^m_0(\Omega; \mathbb{R}) \cap L^2_0(\Omega, \mathbb{R})$ there exists a vector field $u\in H^{m+1}_0(\Omega, \mathbb{R}^{n})$ solving \eqref{Bogdom}. Employing again \eqref{phiPhi},
one sees that such regular solution is not unique, which suggests to define also {\em higher-order Bogovskii constants}.

\begin{definition} \label{defboghigher}
Let $\Omega\subset\R^n$ be a bounded Lipschitz domain and let $m\in\N$. We put
	$$
	X_{m}(\Omega, \mathbb{R}) \doteq H_{0}^{m}(\Omega, \mathbb{R}) \cap L_{0}^{2}(\Omega, \mathbb{R}) \, ,
	$$
which is a closed subspace of $H_{0}^{m}(\Omega; \mathbb{R})$. The quantity
	\begin{equation}\label{bogok}
		C_{B}^{(m)}(\Omega) \doteq   \sup_ {f\in X_{m}(\Omega, \mathbb{R}) \atop \|f\|_{m,\Omega}=1}\quad\inf\Big\{\|v\|_{m+1,\Omega}^2\, ;
		\ v\in H^{m+1}_0(\Omega, \mathbb{R}^{n})\, ,\ \nabla\cdot v=f \text{ in } \Omega\Big\}
	\end{equation}
	is called the {\em $m$-th order Bogovskii constant} of $\Omega$. 
\end{definition}

Note that consistently $C_{B}^{(0)}(\Omega)=C_B(\Omega)$, as defined in \eqref{bogo11}.
The higher-order version of Theorem \ref{minBog}, implicitly including regularity, reads:

\begin{theorem}\label{minBoghigh}
	Let $\Omega\subset\R^n$ be a bounded Lipschitz domain and let $m\ge1$ be an integer. For any function $f\in X_{m}(\Omega, \mathbb{R})$ there exists a unique solution
	$u_f\in H^{m+1}_0(\Omega, \mathbb{R}^{n})$ of \eqref{Bogdom} such that
	\begin{equation}\label{infm}
		\|u_f\|_{m+1,\Omega}^2=\inf\Big\{\|v\|_{m+1,\Omega}^2\, ;\ v\in H^{m+1}_0(\Omega, \mathbb{R}^{n})\, ,\ \nabla\cdot v=f\text{ in }\Omega\Big\}\, .
	\end{equation}
	Moreover, with the scalar product in \eqref{altranorma}, the Euler-Lagrange equation
	\begin{equation}\label{ELeqm}
		(u_f,v)_{m+1, \Omega}=0\qquad\forall v\in H^{m+1}_{0, \sigma}(\Omega, \mathbb{R}^{n})\doteq\{w\in H^{m+1}_0(\Omega, \mathbb{R}^{n});\, \nabla \cdot w=0\mbox{ in }\Omega\}\,
	\end{equation}
	is a necessary and sufficient condition for a solution $u\in H^{m+1}_0(\Omega, \mathbb{R}^{n})$ of \eqref{Bogdom} to minimise \eqref{infm} so that $u\equiv u_f$.
	Finally, there exists a unique scalar function $p_{f} \in X_{m}(\Omega, \mathbb{R})$ such that the pair $(u_{f},p_{f})$ weakly solves the following higher-order Stokes system in $\Omega$:
	\begin{equation} \label{gstokesm}
		(-\Delta)^{m+1} u_{f} + \nabla ((-\Delta)^{m} p_{f}) = 0 \, , \quad \nabla \cdot u_{f} = f \ \ \mbox{ in } \ \ \Omega \, .
	\end{equation}
\end{theorem}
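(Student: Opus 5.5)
The plan is to mimic the proof of Theorem \ref{minBog}, replacing the Dirichlet scalar product by $(\cdot,\cdot)_{m+1,\Omega}$ from \eqref{altranorma}. Since $(\cdot,\cdot)_{m+1,\Omega}$ is equivalent to the $H^{m+1}$ inner product on $H^{m+1}_0(\Omega,\R^n)$, the latter space becomes a Hilbert space with this product.

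\textbf{Existence and uniqueness of the minimiser.} The set of admissible $v$ in \eqref{infm} is a closed affine subspace $v_0+H^{m+1}_{0,\sigma}(\Omega,\R^n)$. It is nonempty by Bogovskii's result \cite{bogovskii1979solution}, \cite[Theorem III.3.3]{galdi2011introduction}, which gives some $v_0\in H^{m+1}_0$ with $\nabla\cdot v_0=f$ for $f\in X_m(\Omega,\R)$. Closedness follows because the divergence is continuous from $H^{m+1}_0$ into $L^2$. Minimising a Hilbert norm over a nonempty closed affine subspace has a unique solution: the orthogonal projection of $0$ onto that subspace. So $u_f=v_0-P v_0$, where $P$ is the orthogonal projection onto $H^{m+1}_{0,\sigma}$. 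Alternatively, one can repeat verbatim the weak-compactness and strict-convexity argument of Theorem \ref{minBog}.

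\textbf{Euler--Lagrange characterisation.} The minimiser is characterised by $u_f\perp H^{m+1}_{0,\sigma}$ in $(\cdot,\cdot)_{m+1,\Omega}$, which is \eqref{ELeqm}. Necessity comes from differentiating $t\mapsto\|u_f+tv\|_{m+1,\Omega}^2$ at $t=0$. Sufficiency follows from the Pythagorean identity $\|u+v\|^2=\|u\|^2+\|v\|^2$ for $v\in H^{m+1}_{0,\sigma}$.

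\textbf{Pressure.} This is the main obstacle. Consider the functional $L(w)\doteq(u_f,w)_{m+1,\Omega}$. It is continuous on $H^{m+1}_0(\Omega,\R^n)$ and vanishes on $\ker(\nabla\cdot)$. Using the closed-range property of
$$\nabla\cdot : H^{m+1}_0(\Omega,\R^n)\to X_m(\Omega,\R),$$
we obtain a unique $\ell\in X_m(\Omega,\R)'$ with $L(w)=\ell(\nabla\cdot w)$. Surjectivity is exactly Bogovskii's theorem, and by the open mapping theorem this gives a bounded right inverse.

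By Riesz in the Hilbert space $(X_m,(\cdot,\cdot)_{m,\Omega})$, there is a unique $p_f\in X_m$ with
$$\ell(g)=(p_f,g)_{m,\Omega}.$$
Hence
$$(u_f,w)_{m+1,\Omega}=(p_f,\nabla\cdot w)_{m,\Omega}\qquad\forall w\in H^{m+1}_0(\Omega,\R^n).$$
Integrating by parts with test fields $w\in\mathcal C^\infty_c$, the left side is $\langle(-\Delta)^{m+1}u_f,w\rangle$. For the right side, when $m=2k$ we get $\int\Delta^k p_f\,\Delta^k(\nabla\cdot w)=-\langle\nabla(\Delta^{2k}p_f),w\rangle$, and the odd case is analogous. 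This yields \eqref{gstokesm} in the distributional sense, with the signs fixed by $(-\Delta)^m$.

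Uniqueness of $p_f$ follows from surjectivity of the divergence onto $X_m$. If $(p,\nabla\cdot w)_{m,\Omega}=0$ for all $w$, then $(p,g)_{m,\Omega}=0$ for all $g\in X_m$, and taking $g=p$ gives $p=0$.

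The delicate point is that the pressure must lie in $X_m$, which includes both the zero-mean condition and the vanishing traces. This is where one has to use the surjectivity onto $X_m$, and not merely onto $L^2_0$. It is also why the inner product on $X_m$ must be $(\cdot,\cdot)_{m,\Omega}$ restricted to the closed subspace $X_m$ rather than to all of $H^m_0$.
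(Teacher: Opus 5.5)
Your proposal is correct and follows essentially the paper's route. The paper obtains the minimiser by the direct method plus strict convexity, which is equivalent to your projection onto the closed affine subspace. The pressure is produced exactly as in the paper: you factor $v\mapsto(u_f,v)_{m+1,\Omega}$ through the divergence using a bounded right inverse onto $X_m$ (the paper's $J_q$ with bound $\sqrt{C_B^{(m)}(\Omega)}$, your open mapping theorem), then apply Riesz in $(X_m,(\cdot,\cdot)_{m,\Omega})$.
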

\noindent
\begin{proof} If $f=0$ the result is trivial and $u_f \equiv 0$.\par
	If $f\neq0$, by linearity of \eqref{Bogdom} we may assume that $\|f\|_{m,\Omega}=1$. Fixed any such $f$, consider a sequence of solutions $\{u_k\}\subset H_0^{m+1}(\Omega, \mathbb{R}^{n})$
of \eqref{Bogdom} whose norms $\|u_k\|_{m+1,\Omega}^2$ tend to the infimum
	in \eqref{infm}. Then $\{u_k\}_{k \in \mathbb{N}}$ is bounded  and, up to a subsequence, it converges weakly to some
	$u_f\in H_0^{m+1}(\Omega, \mathbb{R}^{n})$. In turn,
	$\nabla\cdot u_k\rightharpoonup \nabla\cdot u_f$ in $H^m(\Omega, \mathbb{R})$ as $k\to\infty$, which shows that $u_f$ solves \eqref{Bogdom}. Moreover,
	by lower semicontinuity of the norm with respect to weak convergence, we find
	$$
	\|u_f\|_{m+1,\Omega}^2 \le \liminf_{k\to\infty}\|u_k\|_{m+1,\Omega}^2
	=\inf\Big\{\|v\|_{m+1,\Omega}^2\, ;\ v\in H^{m+1}_0(\Omega, \mathbb{R}^{n})\, ,\ \nabla\cdot v=f\text{ in }\Omega\Big\} \doteq M \, ,
	$$
	which shows that $u_f\in H_0^{m+1}(\Omega, \mathbb{R}^{n})$ achieves the minimum in \eqref{infm}.\par
	In order to prove uniqueness, suppose that two vector fields $u,v\in H^{m+1}_0(\Omega,\mathbb{R}^{n})$ solve
	\eqref{Bogdom} with $\|u\|_{m+1,\Omega}^2=\|v\|_{m+1,\Omega}^2=M$. Then, by linearity,
	any convex combination $w_\alpha=\alpha u+(1-\alpha)v$, $\alpha \in \mathbb{R}$, also solves \eqref{Bogdom}. Moreover, by the H\"older inequality,
	\begin{align*}
		\|w_\alpha\|_{m+1,\Omega}^2 & = \alpha^2\|u\|_{m+1,\Omega}^2+(1-\alpha)^2\|v\|_{m+1,\Omega}^2+2\alpha(1-\alpha)(u,v)_{m+1,\Omega}\\[6pt]
		&\le \alpha^2 M+(1-\alpha)^2M+2\alpha(1-\alpha)\|u\|_{m+1,\Omega}\|v\|_{m+1,\Omega}=M\, .
	\end{align*}
	Since $M$ is the minimum, the above inequality is an equality, that is,
	$$
	(u,v)_{m+1,\Omega}=\|u\|_{m+1,\Omega}\|v\|_{m+1,\Omega}\, .
	$$
	But the H\"older inequality becomes an equality if and only if the two involved functions are proportional, namely
	$v\equiv\gamma u$ for some $\gamma>0$. Since $\|u\|_{m+1,\Omega}^2=\|v\|_{m+1,\Omega}^2=M$, we necessarily have $\gamma=1$ so that $v\equiv u$.
	This also proves that the Euler-Lagrange equation \eqref{ELeqm} is satisfied for any $v\in H^{m+1}_{0, \sigma}(\Omega, \mathbb{R}^{n})$.\par
	Letting  $H^{-m-1}(\Omega, \mathbb{R}^{n})$ denote the dual space of $H_{0}^{m+1}(\Omega, \mathbb{R}^{n})$, we introduce the linear mapping
	$\mathcal{L}_{f} : H_{0}^{m+1}(\Omega, \mathbb{R}^{n}) \longrightarrow \mathbb{R}$ by
	$$
	\begin{aligned}
		\mathcal{L}_{f}(v) \doteq (u_f,v)_{m+1, \Omega} \qquad \forall v \in H_{0}^{m+1}(\Omega, \mathbb{R}^{n}) \, .
	\end{aligned}
	$$
	We certainly have $\mathcal{L}_{f} \in H^{-m-1}(\Omega, \mathbb{R}^{n})$ and, due to \eqref{ELeqm}, that $\mathcal{L}_{f}(v)=0$ for every $v \in H^{m+1}_{0, \sigma}(\Omega, \mathbb{R}^{n})$.
	
	Next, given any $q \in X_{m}(\Omega, \mathbb{R})$, there exists a vector field $J_{q} \in H_{0}^{m+1}(\Omega, \mathbb{R}^{n})$ such that
	\begin{equation} \label{vecjeprebogm}
		\nabla \cdot J_{q} = q \ \ \mbox{in} \ \ \Omega \qquad \text{and} \qquad \| J_{q} \|_{m+1, \Omega} \leq \sqrt{C^{(m)}_{B}(\Omega)} \, \| q\|_{m, \Omega} \, .
	\end{equation}
	Define then the mapping $\widehat{\mathcal{L}}_{f} : X_{m}(\Omega, \mathbb{R}) \longrightarrow \mathbb{R}$ by
	\begin{equation} \label{vecjepre22bogm}
		\widehat{\mathcal{L}}_{f}(q) \doteq \mathcal{L}_{f}(J_{q}) \qquad \forall q \in X_{m}(\Omega, \mathbb{R}) \, .
	\end{equation}
	Notice that $\widehat{\mathcal{L}}_{f}$ is well-defined: given $q \in X_{m}(\Omega, \mathbb{R})$, if $J_{1}, J_{2} \in H_{0}^{m+1}(\Omega, \mathbb{R}^{n})$ are two different vector fields such that
	$$
	\nabla \cdot J_{1} = \nabla \cdot J_{2} = q \ \ \mbox{in} \ \ \Omega \, ,
	$$
	then $J_{1} - J_{2} \in  H^{m+1}_{0, \sigma}(\Omega, \mathbb{R}^{n})$. This means that $\mathcal{L}_{f}(J_{1} - J_{2}) = 0$, and, by linearity, that $\mathcal{L}_{f}(J_{1}) = \mathcal{L}_{f}(J_{2})$. Since $\widehat{\mathcal{L}}_{f}$ is linear, \eqref{vecjeprebogm}$_2$ ensures that $\widehat{\mathcal{L}}_{f}$ is a bounded linear functional on $X_{m}(\Omega, \mathbb{R})$. From the Riesz Representation Theorem we then deduce the existence of a unique $p_{f} \in X_{m}(\Omega, \mathbb{R})$ such that
	\begin{equation} \label{vecjepre33bogm}
		\widehat{\mathcal{L}}_{f}(q) = (p_f,q)_{m, \Omega}  \qquad \forall q \in X_{m}(\Omega, \mathbb{R}) \, .
	\end{equation}
	Given any $v \in H_{0}^{m+1}(\Omega, \mathbb{R}^{n})$, we infer from \eqref{vecjepre22bogm}-\eqref{vecjepre33bogm} that
	$$
	\mathcal{L}_{f}(v) = \widehat{\mathcal{L}}_{f}(\nabla \cdot v) = (p_f,\nabla \cdot v)_{m, \Omega}  \, .
	$$
	Thus, the vector field $u_{f} \in H_{0}^{m+1}(\Omega, \mathbb{R}^{n})$ has a unique associated ``pressure'' $p_{f} \in X_{m}(\Omega, \mathbb{R})$ such that
	\begin{equation} \label{vecjepre34bogm}
		(u_f,v)_{m+1, \Omega} - (p_f,\nabla \cdot v)_{m, \Omega} = 0 \qquad \forall v \in H_{0}^{m+1}(\Omega, \mathbb{R}^{n}) \, ,
	\end{equation}
	which corresponds to the weak formulation of the higher-order Stokes problem \eqref{gstokesm} in $\Omega$.
\end{proof}

One may find it surprising that the higher-order version of Theorem \ref{lowerbound} looks rather similar. We recall the definition \eqref{eq:def_K_i} of $\mathcal{K}_{i}$.

\begin{theorem}\label{lowerboundHO}
	Let $\Omega\subset\R^n$ bounded Lipschitz domain. For any $m \in \mathbb{N}$ one has
	$$
	C^{(m)}_{B}(\Omega)\ge n \, .
	$$
	If there exist $i \in \{1,...,n\}$ and $g\in\mathcal{K}_{i}(\Omega, \mathbb{R}) \cap L^{2}(\Omega, \mathbb{R})$ such that the unique solution $w \in H^{m+1}_{0}(\Omega, \mathbb{R})$ to
	\begin{equation}\label{generalTorsionho}
		(-\Delta)^{m+1} w = g \ \ \mbox{ in } \ \ \Omega \, ,
	\end{equation}
	satisfies
	\begin{equation}\label{conditionho}
		\left\| \dfrac{\partial w}{\partial x_{i}} \right\|^{2}_{m, \Omega} < \frac{1}{n} \| w \|^{2}_{m+1, \Omega} \, ,
	\end{equation}
	then $C_{B}^{(m)}(\Omega)>n$.\\
\end{theorem}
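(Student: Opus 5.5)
The plan is to mimic the proof of Theorem \ref{lowerbound}, with the torsion problem replaced by its polyharmonic analogue. Fix $g\equiv1$, which belongs to $\mathcal{K}_i(\Omega,\R)$ for every $i$. Let $w\in H^{m+1}_0(\Omega,\R)$ be the unique weak solution of $(-\Delta)^{m+1}w=1$; it exists by the Riesz theorem applied to the scalar product \eqref{altranorma} on $H^{m+1}_0$. Its weak formulation reads $(w,\varphi)_{m+1,\Omega}=\int_\Omega\varphi\,dx$ for all $\varphi\in H^{m+1}_0(\Omega,\R)$.

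First we need the analogue of \eqref{elementary}. Since $\partial_{x_j}$ commutes with $\Delta^k$, integration by parts (valid since $w\in H^{m+1}_0$) gives
$$\|w\|^2_{m+1,\Omega}=\sum_{j=1}^n\|\partial_{x_j}w\|^2_{m,\Omega}.$$
For $m+1=2k$ this follows from $\int(\Delta^k w)^2=\int|\nabla\Delta^{k-1}\cdot\,|$-type identities, namely $\int_\Omega(\Delta^kw)^2=\sum_j\int_\Omega|\nabla \Delta^{k-1}\partial_jw|^2$. For $m+1=2k+1$ it is the trivial identity $|\nabla\Delta^kw|^2=\sum_j(\Delta^k\partial_jw)^2$. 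Hence there is an index $k$ with $\|\partial_{x_k}w\|^2_{m,\Omega}\le\frac1n\|w\|^2_{m+1,\Omega}$.

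Next, set $u=w\,e_k\in H^{m+1}_0(\Omega,\R^n)$ and $f=\nabla\cdot u=\partial_{x_k}w$. We have $f\in H^m_0$, and $f$ has zero mean because $w$ vanishes on the boundary, so $f\in X_m(\Omega,\R)$. We check the Euler--Lagrange condition \eqref{ELeqm}: for $v\in H^{m+1}_{0,\sigma}$,
$$(u,v)_{m+1,\Omega}=(w,v_k)_{m+1,\Omega}=\int_\Omega v_k\,dx=0,$$
by Lemma \ref{lem:lemma1} with $g(s)=s$. By Theorem \ref{minBoghigh}, $u=u_f$. Therefore
$$C_B^{(m)}(\Omega)\ge\frac{\|u_f\|^2_{m+1,\Omega}}{\|f\|^2_{m,\Omega}}=\frac{\|w\|^2_{m+1,\Omega}}{\|\partial_{x_k}w\|^2_{m,\Omega}}\ge n.$$
Here we note that the supremum in \eqref{bogok} can be written as a ratio, using the minimiser $u_f$, exactly as in \eqref{bogo}.

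For the strict criterion we repeat the argument with the given $g\in\mathcal{K}_i$ and the given index $i$. The field $u=we_i$ satisfies \eqref{ELeqm} because $(w,v_i)_{m+1,\Omega}=\int_\Omega g(x_i)v_i\,dx=0$. This again uses Lemma \ref{lem:lemma1}, with $g$ replaced by a primitive $G$ of $g$ so that $G'(x_i)=g(x_i)$. Then \eqref{conditionho} gives a Bogovskii ratio strictly larger than $n$.

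The main obstacle is a regularity issue in Lemma \ref{lem:lemma1}, which is stated for $g\in\mathcal{C}^1$ whereas here $g$ is only in $L^2$. We handle it by proving the needed identity directly. For $v$ divergence-free in $H^1_0$, the identity $\int_\Omega G'(x_i)v_i\,dx=-\int_\Omega G(x_i)\nabla\cdot v\,dx=0$ holds for any $G\in H^1_{loc}$ in the variable $x_i$. This suffices because $G(x_i)\in H^1(\Omega)$ on the bounded set $\Omega$. Alternatively, one can approximate $g$ in $L^2$ by continuous functions of $x_i$.
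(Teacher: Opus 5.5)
Your proposal is correct and follows the paper's proof essentially step by step. The ingredients match: the polyharmonic torsion function $w$, the splitting $\|w\|^2_{m+1,\Omega}=\sum_j\|\partial_{x_j}w\|^2_{m,\Omega}$, and the field $w\,e_k$ (resp.\ $w\,e_i$) identified with $u_f$ via \eqref{ELeqm} and Lemma~\ref{lem:lemma1}. Your extra care about $g$ being only in $L^2$ is a genuine improvement, since the paper applies Lemma~\ref{lem:lemma1} to such $g$ without comment; of your two fixes, the approximation of $g$ by continuous functions of $x_i$ is the airtight one, because $G(x_i)\in H^1(\Omega)$ does not follow from boundedness of $\Omega$ alone (it holds here only because $\Omega$ is Lipschitz).
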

\noindent
\begin{proof}
Since the case $m=0$ was considered in the proof of Theorem \ref{lowerbound}, we assume $m \geq 1$.
Given any $w\in H^{m+1}_{0}(\Omega, \mathbb{R})$, from \cite[Formula (2.12)]{GGSbook} we know that
	\begin{equation}\label{elementarym}
		\| w \|^{2}_{m+1, \Omega} = \int_\Omega|D^{m+1} w|^2\, dx=\sum_{i=1}^{n} \left( \int_\Omega \left| D^{m}(\partial_{x_i} w) \right|^2 \right)
		\ge n\ \min_{i=1,...,n} \ \| \partial_{x_i} w \|^{2}_{m, \Omega} \, .
	\end{equation}
	Here it is essential to define
	$$
	|D^{m} w|^2:=\sum^n_{i_1,\ldots,i_m=1} (\partial_{x_{i_1}}\ldots \partial_{x_{i_m}} w)^2
	$$
	in order to have that  $\| w \|^{2}_{m, \Omega}=(w,w)_{m, \Omega}$ as defined in \eqref{altranorma}. The multiindex notation fails in this respect, if $m\ge 2$.
We apply \eqref{elementarym} to the unique weak solution $w\in H^{m+1}_{0}(\Omega, \mathbb{R})$ of the problem
$(-\Delta)^{m+1} w=1$ in $\Omega$ under homogeneous Dirichlet boundary conditions, that is,
	\begin{equation}\label{partialk0ho}
		(w, \varphi)_{m+1,\Omega} = \int_\Omega \varphi \, dx\qquad \forall \varphi \in H^{m+1}_{0}(\Omega, \mathbb{R}) \, .
	\end{equation}
Let $k\in\{1,...,n\}$ denote the index of one of the partial derivatives of $w$ attaining the minimum in the right hand side
of \eqref{elementarym} so that
	\begin{equation}\label{partialkm}
	\| w \|^{2}_{m+1, \Omega} \geq n \, \| \partial_{x_k} w \|^{2}_{m, \Omega} \, .
	\end{equation}
	Take the vector field $u=(u_1,...,u_n)\in H^{m+1}_{0}(\Omega, \mathbb{R}^{n})$ such that $u_k=w$ and $u_i\equiv0$ for $i\neq k$, and define $f \in X_{m}(\Omega, \mathbb{R})$ by
	$$
	f\doteq\nabla\cdot u=\dfrac{\partial w}{\partial x_{k}} \quad \text{in} \ \ \Omega \, .
	$$
	We claim that $u = u_f$; given any divergence-free vector field $v = (v_{1},...,v_{n}) \in H^{m+1}_{0, \sigma}(\Omega, \mathbb{R}^{n})$, notice that
	$$
	(u, v)_{m+1,\Omega} = (w, v_{k})_{m+1,\Omega} = \int_{\Omega} v_{k} \, dx = 0 \, ,
	$$
	owing to Lemma \ref{lem:lemma1} and \eqref{partialk0ho}, so that the Euler-Lagrange equation \eqref{ELeqm} is fulfilled. Then, Theorem \ref{minBoghigh} combined with \eqref{bogok} and \eqref{partialkm} gives
	$$
	C_B^{(m)}(\Omega)=\sup_ {h\in X_{m}(\Omega, \mathbb{R}) \setminus \{0\}}\frac{\|u_h\|_{m+1, \Omega}^{2}}{\|h\|_{m, \Omega}^{2}} \geq \frac{\|u_f\|_{m+1, \Omega}^{2}}{\|f\|_{m, \Omega}^{2}}=\frac{\|w\|_{m+1, \Omega}^{2}}{\|\partial_{x_k} w\|_{m, \Omega}^{2}}
	\ge n\, ,
	$$
	thus providing the stated lower bound for $C_B^{(m)}(\Omega)$.\par
	Now, consider $i \in \{1,...,n\}$, $g\in\mathcal{K}_{i}(\Omega, \mathbb{R}) \cap L^{2}(\Omega, \mathbb{R})$ and $w\in H^{m+1}_{0}(\Omega, \mathbb{R})$ satisfying \eqref{generalTorsionho}-\eqref{conditionho}, meaning  that
	\begin{equation}\label{partialk1ho}
		(w, \varphi)_{m+1,\Omega} = \int_\Omega g \, \varphi \, dx\qquad \forall \varphi \in H^{m+1}_{0}(\Omega, \mathbb{R}) \, .
	\end{equation}
	Take the vector field
	$u=(u_1,...,u_n)\in H^{m+1}_{0}(\Omega, \mathbb{R}^{n})$ such that $u_i=w$ and $u_j\equiv0$ for $j \neq i$, and define $h \in X_{m}(\Omega, \mathbb{R})$ by
	$$
	h\doteq\nabla\cdot u=\dfrac{\partial w}{\partial x_{i}} \quad \text{in} \ \ \Omega \, .
	$$
	Again, we claim that $u = u_h$; given any vector field $v = (v_{1},...,v_{n}) \in H^{m+1}_{0, \sigma}(\Omega, \mathbb{R}^{n})$, notice that
	$$
	(u, v)_{m+1,\Omega} = (w, v_{i})_{m+1,\Omega} = \int_{\Omega} g \, v_{i}\, dx = 0 \, ,
	$$
	owing to Lemma \ref{lem:lemma1} and \eqref{partialk1ho}, so that the Euler-Lagrange equation \eqref{ELeqm} is fulfilled. Then, by using \eqref{bogok} and \eqref{conditionho}, we find
	$$
	C_B^{(m)}(\Omega) \ge \frac{\|u_h\|_{m+1, \Omega}^{2}}{\|h\|_{m, \Omega}^{2}}=\frac{\|w\|_{m+1, \Omega}^{2}}{\|\partial_{x_i} w\|_{m, \Omega}^{2}}>n\, ,
	$$
	which completes the proof of the second statement.
\end{proof}

Let $m\ge1$ be an integer. Following the line of Section \ref{VEL}, the orthogonal decomposition
\begin{equation} \label{ortdecho}
	H_{0}^{m+1}(\Omega, \mathbb{R}^{n}) = H_{0, \sigma}^{m+1}(\Omega, \mathbb{R}^{n}) \oplus H_{0, \sigma}^{m+1}(\Omega, \mathbb{R}^{n})^{\perp}
\end{equation}
turns out to be useful in the current setting. In fact, given $p\in X_{m}(\Omega, \mathbb{R})$ there exists a unique vector field $u\in H_{0, \sigma}^{m+1}(\Omega, \mathbb{R}^{n})^{\perp}$ such that
\begin{equation}\label{eq:comm_1.2ho}
	(-\Delta)^{m+1} u + \nabla ((-\Delta)^{m} p) = 0 \quad \text{in the sense of} \ \ H^{-m-1}(\Omega, \mathbb{R}^{n}) \, ,
\end{equation}
that is,
\begin{equation}\label{eq:comm_1.1ho}
	(u,v)_{m+1, \Omega} - (p,\nabla \cdot v)_{m, \Omega} = 0 \qquad \forall v \in H_{0}^{m+1}(\Omega, \mathbb{R}^{n}) \, .
\end{equation}
Existence and uniqueness of a vector field $u \in H_{0}^{m+1}(\Omega; \mathbb{R}^{n})$ satisfying \eqref{eq:comm_1.2ho} follow directly from Riesz's Representation Theorem, while \eqref{eq:comm_1.1ho}
ensures that $u\in H_{0, \sigma}^{m+1}(\Omega, \mathbb{R}^{n})^{\perp}$. Conversely, as in the proof of Theorem \ref{minBoghigh} one sees that, given any
$u\in H_{0, \sigma}^{m+1}(\Omega, \mathbb{R}^{n})^{\perp}$, there exists a unique $p\in X_{m}(\Omega, \mathbb{R})$ such that \eqref{eq:comm_1.1ho} is satisfied. This establishes a bijective correspondence
$$
H_{0, \sigma}^{m+1}(\Omega, \mathbb{R}^{n})^{\perp} \ni u \mapsto p_u\in  X_{m}(\Omega, \mathbb{R})
$$
via the weak formulation \eqref{eq:comm_1.1ho}. In particular, it holds
\begin{equation} \label{equivnormsho}
	\|u\|_{m+1,\Omega} = \|(-\Delta)^{m+1} u\|_{H^{-m-1}(\Omega)} = \|\nabla ((-\Delta)^{m} p_{u})\|_{H^{-m-1}(\Omega)} \qquad \forall u \in H_{0, \sigma}^{m+1}(\Omega, \mathbb{R}^{n})^{\perp} \, .
\end{equation}

Theorem \ref{minBoghigh} enables us to slightly modify the variational characterisation of $C^{(m)}_B(\Omega)$ in \eqref{bogok}:
\begin{equation}\label{bogoho}
	C^{(m)}_{B}(\Omega) = \sup_ {f\in X_{m}(\Omega, \mathbb{R}) \setminus \{0\}} \ \dfrac{\|u_f\|_{m+1,\Omega}^2}{\|f\|_{m,\Omega}^2} \, ,
\end{equation}
with $u_{f} \in H_{0}^{m+1}(\Omega, \mathbb{R}^{n})$ being the unique vector field in $H_{0}^{m+1}(\Omega, \mathbb{R}^{n})$ satisfying the Euler-Lagrange equation \eqref{ELeqm}, or equivalently, the higher-order Stokes system \eqref{gstokesm}. Moreover, Theorem \ref{minBoghigh} and \eqref{bogoho} define the bounded linear operator
$$\mathcal{B}_{m} : X_{m}(\Omega, \mathbb{R}) \longrightarrow H_{0}^{m+1}(\Omega, \mathbb{R}^{n})\, ,\qquad \mathcal{B}(f) \doteq u_{f}\quad\forall f\in X_m(\Omega, \mathbb{R})$$
where boundedness (and continuity) follow from \eqref{bogok}: $\| \mathcal{B}_{m} \|^{2} = C^{(m)}_B(\Omega)<\infty$.

In analogy with Proposition \ref{lem:comm_1.6}, we also prove the higher-order version of the Ne\v{c}as inequality \eqref{eq:comm_1.3}$_2$:

\begin{proposition}\label{lem:comm_1.6ho}
	For any bounded Lipschitz domain $\Omega \subset \R^n$ and any integer $m  \geq 1$ one has:
	\begin{equation}\label{eq:comm_1.13ho}
		C^{(m)}_{B}(\Omega) = \sup_ {f\in X_{m}(\Omega) \setminus \{0\}} \ \dfrac{\|f\|_{m,\Omega}^2}{\|\nabla ((-\Delta)^{m} f)\|_{H^{-m-1}(\Omega)}^2} \, .
	\end{equation}	
\end{proposition}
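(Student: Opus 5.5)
We mimic the second half of the proof of Proposition~\ref{lem:comm_1.6}, with the scalar products \eqref{altranorma} replacing the $L^2$ ones. Write $C_{\mathcal N}^{(m)}(\Omega)$ for the right-hand side of \eqref{eq:comm_1.13ho}. First note that, for $f\in X_m(\Omega,\R)$, the functional $\nabla((-\Delta)^m f)\in H^{-m-1}(\Omega,\R^n)$ acts on $v\in H^{m+1}_0(\Omega,\R^n)$ by $v\mapsto (f,\nabla\cdot v)_{m,\Omega}$, up to sign and after integrating by parts. Hence
$$\|\nabla((-\Delta)^m f)\|_{H^{-m-1}(\Omega)}=\sup_{v\in H^{m+1}_0(\Omega,\R^n)\setminus\{0\}}\frac{(f,\nabla\cdot v)_{m,\Omega}}{\|v\|_{m+1,\Omega}}.$$
By the decomposition \eqref{ortdecho}, the supremum may be restricted to $v\in H^{m+1}_{0,\sigma}(\Omega,\R^n)^\perp$, since divergence-free components do not contribute to the numerator and only increase the denominator.

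\emph{Step 1: $C_{\mathcal N}^{(m)}\le C_B^{(m)}$.} Given $f\neq0$, test the supremum above with $v=u_f=\mathcal B_m(f)$ from Theorem~\ref{minBoghigh}. This gives $\|\nabla((-\Delta)^m f)\|_{H^{-m-1}}\ge \|f\|_{m,\Omega}^2/\|u_f\|_{m+1,\Omega}$, that is,
$$\frac{\|f\|_{m,\Omega}^2}{\|\nabla((-\Delta)^m f)\|_{H^{-m-1}}^2}\le \frac{\|u_f\|_{m+1,\Omega}^2}{\|f\|_{m,\Omega}^2}\le C_B^{(m)}(\Omega)$$
by \eqref{bogoho}. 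Taking the supremum over $f$ yields the inequality. A side benefit is that $C_{\mathcal N}^{(m)}<\infty$, because $C_B^{(m)}<\infty$ by Bogovskii's result.

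\emph{Step 2: $C_B^{(m)}\le C_{\mathcal N}^{(m)}$.} Fix $f\in X_m\setminus\{0\}$ and let $(u_f,p_f)$ be the pair of Theorem~\ref{minBoghigh}, so that \eqref{vecjepre34bogm} holds. Testing with $v=u_f$ and using the Cauchy--Schwarz inequality for $(\cdot,\cdot)_{m,\Omega}$ gives
$$\|u_f\|_{m+1,\Omega}^2=(p_f,f)_{m,\Omega}\le\|p_f\|_{m,\Omega}\|f\|_{m,\Omega}.$$
Since $p_f\in X_m(\Omega,\R)$, the definition of $C_{\mathcal N}^{(m)}$ gives $\|p_f\|_{m,\Omega}\le\sqrt{C_{\mathcal N}^{(m)}}\,\|\nabla((-\Delta)^m p_f)\|_{H^{-m-1}}$. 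By \eqref{equivnormsho}, the right-hand side equals $\sqrt{C_{\mathcal N}^{(m)}}\,\|u_f\|_{m+1,\Omega}$. Dividing by $\|u_f\|_{m+1,\Omega}$, and noting that $u_f\ne0$ because $f\neq0$, we get $\|u_f\|_{m+1,\Omega}^2\le C_{\mathcal N}^{(m)}\|f\|_{m,\Omega}^2$. Then \eqref{bogoho} concludes. If $p_f=0$, the same estimate trivially forces $u_f=0$, which contradicts $f\ne0$.

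The main obstacle is the identification of the $H^{-m-1}$ duality used in both steps. One must check that $\langle \nabla((-\Delta)^m f),v\rangle$ coincides, up to sign, with $(f,\nabla\cdot v)_{m,\Omega}$ for the norm \eqref{altranorma}, in both the even and odd cases of $m$. This is an integration by parts that is legitimate because $f\in H^m_0$ and $v\in H^{m+1}_0$, so it suffices to verify it on $\mathcal C^\infty_c$ functions and pass to the limit by density. The same identity underlies \eqref{equivnormsho}, via \eqref{eq:comm_1.1ho}, so I would state it once explicitly and then use it in both directions.
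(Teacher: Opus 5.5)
Your proof is correct and follows essentially the same route as the paper. You bound the $H^{-m-1}$ norm from below by testing with $u_f$, and you get the reverse inequality by testing \eqref{vecjepre34bogm} with $u_f$, applying Cauchy--Schwarz, and invoking \eqref{equivnormsho}. The only cosmetic differences are that you test directly against $u_f$ and use \eqref{bogoho}, rather than first passing through the characterisation \eqref{eq:comm_1.3ho}. You also make explicit the duality identity $\langle\nabla((-\Delta)^m f),v\rangle=-(f,\nabla\cdot v)_{m,\Omega}$, which the paper uses without comment.
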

\noindent
\begin{proof}
	Given any $f\in  X_{m}(\Omega, \mathbb{R})$, the Euler-Lagrange equation \eqref{ELeqm} implies that $u_{f} \in H^{m+1}_{0,\sigma}(\Omega, \mathbb{R}^{n})^\perp$. Therefore, for any solution $u \in H^{m+1}_{0}(\Omega, \mathbb{R}^{n})$
	of the divergence equation \eqref{Bogdom} one has $u=u_f$ if and only if $u\in H^{m+1}_{0,\sigma}(\Omega, \mathbb{R}^{n})^\perp$.
	Based upon this observation, from \eqref{bogoho} we deduce:
	\begin{equation}\label{eq:comm_1.3ho}
		C^{(m)}_{B}(\Omega) = \sup_ {u \in H^{m+1}_{0,\sigma}(\Omega, \mathbb{R}^{n})^\perp \setminus \{0\}} \ \dfrac{\|u\|_{m+1,\Omega}^2}{\|\nabla \cdot u\|_{m,\Omega}^2} \, .
	\end{equation}	
Recalling \eqref{ortdecho}, for any $f\in X_{m}(\Omega, \mathbb{R}) \setminus \{0\}$ we find
	\begin{align*}
		\|\nabla ((-\Delta)^{m} f)\|_{H^{-m-1}(\Omega)}=&\sup_{u\in H^{m+1}_{0}(\Omega, \mathbb{R}^{n})\setminus \{0\}} \ \dfrac{ | (f, \nabla \cdot u)_{m, \Omega} |}{\| u\|_{m+1, \Omega}}
		=\sup_{u\in H^{m+1}_{0,\sigma}(\Omega, \mathbb{R}^{n})^\perp\setminus \{0\}} \ \dfrac{|(f, \nabla \cdot u)_{m, \Omega}|}{\| u\|_{m+1, \Omega}} \\[6pt]
		=& \sup_{u\in H^{m+1}_{0,\sigma}(\Omega, \mathbb{R}^{n})^\perp\setminus \{0\}} \ \dfrac{|(\nabla \cdot u_{f}, \nabla \cdot u)_{m, \Omega}|}{\| u\|_{m+1, \Omega}} \geq \frac{\|\nabla\cdot u_f\|^2_{m, \Omega}}{\| u_f\|_{m+1,\Omega}} \, .
	\end{align*}
	From this we conclude, with the help of \eqref{eq:comm_1.3ho},
	\begin{align*}
		C^{(m)}_{\mathcal{N}}(\Omega) & \doteq \sup_ {f\in X_{m}(\Omega, \mathbb{R}) \setminus \{0\}} \ \dfrac{\|f\|_{m,\Omega}^2}{\|\nabla ((-\Delta)^{m} f)\|_{H^{-m-1}(\Omega)}^2} =
		\sup_{f\in X_{m}(\Omega, \mathbb{R}) \setminus \{0\}} \ \dfrac{\|\nabla \cdot u_{f}\|_{m,\Omega}^2}{\|\nabla ((-\Delta)^{m} f)\|_{H^{-m-1}(\Omega)}^2} \\[6pt]
		\leq &\sup_{f\in X_{m}(\Omega, \mathbb{R}) \setminus \{0\}} \frac{\| u_f\|^{2}_{m+1,\Omega}}{\|\nabla\cdot u_f\|^2_{m, \Omega}} = \sup_ {u \in H^{m+1}_{0,\sigma}(\Omega, \mathbb{R}^{n})^\perp \setminus \{0\}} \ \dfrac{\|u\|_{m+1,\Omega}^2}{\|\nabla \cdot u\|_{m,\Omega}^2}
		=C_B^{(m)}(\Omega) \, .
	\end{align*}
On the other hand, take any $f\in X_{m}(\Omega, \mathbb{R})  \setminus \{0\}$. Recalling Theorem \ref{minBoghigh}, consider the unique pair $(u_{f}, p_{f}) \in H^{m+1}_{0}(\Omega,\R^n) \times X_{m}(\Omega,\R)$ satisfying the higher-order Stokes system \eqref{gstokesm}, so that
\begin{equation} \label{pressureandvelho}
\|\nabla ((-\Delta)^{m} p_{f})\|_{H^{-m-1}(\Omega)} = \| (-\Delta)^{m+1} u_{f}\|_{H^{-m-1}(\Omega)} = \|u_{f}\|_{m+1,\Omega} \, .
\end{equation}
Moreover, by testing \eqref{gstokesm} with $u_{f}$ (see \eqref{vecjepre34bogm}) and applying the Cauchy-Schwarz inequality, we obtain
$$
\|u_{f}\|^{2}_{m+1,\Omega} = (p_{f} \,, f)_{m, \Omega} \leq \|p_{f}\|_{m, \Omega} \, \|f\|_{m, \Omega} \leq \sqrt{C^{(m)}_{\mathcal{N}}(\Omega)} \, \|\nabla ((-\Delta)^{m} p_{f})\|_{H^{-m-1}(\Omega)} \, \|f\|_{m, \Omega} \, ,
$$
which, due to \eqref{pressureandvelho}, simplifies to
$$
\|u_{f}\|_{m+1,\Omega}  \leq \sqrt{C^{(m)}_{\mathcal{N}}(\Omega)} \, \|f\|_{m, \Omega} \qquad \forall f\in X_{m}(\Omega,\R) \setminus \{0\} \, .
$$
Owing to \eqref{bogoho}, the last inequality implies $C^{(m)}_{B}(\Omega) \leq C^{(m)}_{\mathcal{N}}(\Omega)$, which concludes the proof.
\end{proof}

We next discuss whether the supremum in \eqref{bogoho} is attained. To approach this question, we define
$$
A_{m}(\Omega, \mathbb{R}) \doteq \{ p \in X_{m}(\Omega, \mathbb{R}) \, ; (\exists \psi \in H_{0}^{m+2}(\Omega, \mathbb{R})) \ p = \Delta \psi \ \ \text{in} \ \ \Omega \, \} \, ,
$$
so that the following orthogonal decomposition holds:
\begin{equation} \label{decompl02ho}
X_{m}(\Omega, \mathbb{R}) = A_{m}(\Omega, \mathbb{R}) \oplus A_{m}(\Omega, \mathbb{R})^{\perp} \, ,
\end{equation}
see \cite[Part II]{riedl2010cosserat}. Notice that $A_{m}(\Omega, \mathbb{R})^{\perp}$ is the subset of $X_{m}(\Omega, \mathbb{R})$ comprising weakly $(m+1)$-polyharmonic functions. Then, the \textit{higher-order Schur complement of the Stokes operator} in $\Omega$ is the linear mapping $\mathcal{S}_{m} : X_{m}(\Omega, \mathbb{R}) \longrightarrow X_{m}(\Omega, \mathbb{R})$ obeying the formula
$$
\mathcal{S}_{m}(q) \doteq \nabla \cdot V_{q} \qquad \forall q \in X_{m}(\Omega, \mathbb{R}) \, ,
$$
where $V_{q} \in H_{0}^{m+1}(\Omega, \mathbb{R}^{n})$ is the unique vector field such that
$(-\Delta)^{m+1} V_{q} + \nabla ((-\Delta)^{m} q) = 0$,
whose existence and uniqueness follows from the Riesz Representation Theorem: $V_{q} \in H_{0}^{m+1}(\Omega; \mathbb{R}^{n})$ is the unique vector field such that
\begin{equation} \label{schurm0}
	(V_{q},v)_{m+1, \Omega} - (q,\nabla \cdot v)_{m, \Omega} = 0 \qquad \forall v \in H_{0}^{m+1}(\Omega, \mathbb{R}^{n}) \, .
\end{equation}
As in \cite[Theorem 1.2 and the subsequent remark]{riedl2013cosserat}, it can be shown that
\begin{equation} \label{sondecompho}
	\mathcal{S}_{m}(q) = q \qquad \forall q \in A_{m}(\Omega, \mathbb{R}) \, ; \qquad \mathcal{S}_{m}(q) \in A_{m}(\Omega, \mathbb{R})^{\perp} \qquad \forall q \in A_{m}(\Omega, \mathbb{R})^{\perp} \, .
\end{equation}
Furthermore, we notice that the operator $\mathcal{S}_{m}$ is self-adjoint, since
$$
(\mathcal{S}_{m}(p),q)_{m, \Omega} = (\nabla \cdot V_{p},q)_{m, \Omega} = (V_{q},V_{p})_{m+1, \Omega} = (p,\nabla \cdot V_{q})_{m, \Omega} = (p, \mathcal{S}_{m}(q))_{m, \Omega} \qquad \forall p,q \in X_{m}(\Omega, \mathbb{R}) \, .
$$
In particular, due to \eqref{equivnormsho} and Proposition \ref{lem:comm_1.6ho}, we have, for every $p \in X_{m}(\Omega, \mathbb{R})$,
$$
\begin{aligned}
	(\mathcal{S}_{m}(p),p)_{m, \Omega} & = \|V_{p}\|_{m+1,\Omega}^2 = \|\nabla ((-\Delta)^{m} p)\|_{H^{-m-1}(\Omega)}^2 \\[6pt]
	& \geq \left[ \inf_{f\in X_{m}(\Omega, \mathbb{R}) \setminus \{0\}} \ \dfrac{\|\nabla ((-\Delta)^{m} f)\|_{H^{-(m+1)}(\Omega)}^2}{\|f\|_{m,\Omega}^2} \right] \|p\|_{m,\Omega}^2 = \dfrac{1}{C^{(m)}_{B}(\Omega)} \|p\|_{m,\Omega}^2 \, ,
\end{aligned}
$$
thereby proving that $\mathcal{S}_{m}$ is also a positive definite  operator, with $1/C^{(m)}_{B}(\Omega)$  its optimal coercivity constant. In other words, we have
$$
\inf_{p \in X_{m}(\Omega, \mathbb{R}) \setminus \{0\}} \dfrac{(\mathcal{S}_{m}(p),p)_{m, \Omega}}{\|p\|_{m,\Omega}^2} = \dfrac{1}{C^{(m)}_{B}(\Omega)} \, , \qquad \sup_{p \in X_{m}(\Omega, \mathbb{R}) \setminus \{0\}} \dfrac{(\mathcal{S}_{m}(p),p)_{m, \Omega}}{\|p\|_{m,\Omega}^2} = 1 \, .
$$
Therefore, letting $\sigma(\mathcal{S}_{m})$ denote the spectrum of $\mathcal{S}_{m}$, it holds
\begin{equation} \label{spectrum1ho}
	\left\{ \dfrac{1}{C^{(m)}_{B}(\Omega)},1 \right\} \subset \sigma(\mathcal{S}_{m}) \subset \left[ \dfrac{1}{C_{B}^{(m)}(\Omega)},1 \right] \, , \qquad \text{with} \  \ C_{B}^{(m)}(\Omega) \geq n \, .
\end{equation}
Nevertheless, while $\lambda =1$ is indeed an eigenvalue of $\mathcal{S}_{m}$ of infinite multiplicity (due to \eqref{sondecompho}), it is not clear, a priori, that $1/C_{B}^{(m)}(\Omega)$ is also an eigenvalue of $\mathcal{S}_{m}$. The following result states that if $1/C_{B}^{(m)}(\Omega)$ is an eigenvalue of $\mathcal{S}_{m}$, then $\mathcal{B}_{m}$ is a norm-attaining operator between $X_{m}(\Omega, \mathbb{R})$ and $H_{0}^{m+1}(\Omega, \mathbb{R}^{n})$. The corresponding proof is omitted, since it is entirely similar to that of Theorem \ref{bogoatttheo}, but we refer to \cite[Chapter I, Section 6]{riedl2010cosserat}, \cite[Section 2]{riedl2013cosserat} and \cite[Section 11]{weyers2006q} for further details.

\begin{theorem}\label{bogoatttheoho}
	Let $m \in \mathbb{N}$ and $\Omega\subset\R^n$ be a bounded Lipschitz domain. If $f_{*} \in A_{m}(\Omega, \mathbb{R})^{\perp} \setminus \{0\}$ is an eigenfunction of the Schur complement $\mathcal{S}_{m}$, associated to the eigenvalue $1/C^{(m)}_{B}(\Omega)$, then
	\begin{equation}\label{bogoattho}
		C^{(m)}_{B}(\Omega) = \dfrac{\| \nabla \mathcal{B}_{m}(f_{*}) \|_{m+1, \Omega}^2}{\| f_{*} \|_{m, \Omega}^2} \, .
	\end{equation}
Moreover, if $\Omega$ has a boundary of class $\mathcal{C}^{m+4}$ and $C^{(m)}_{B}(\Omega) > 2$, then there exists $f_{*} \in A_{m}(\Omega, \mathbb{R})^{\perp} \setminus \{0\}$ satisfying \eqref{bogoattho}.
\end{theorem}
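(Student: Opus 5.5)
The plan mirrors the proof of Theorem \ref{bogoatttheo} in the higher-order functional setting; the norm in \eqref{bogoattho} is read as $\|\mathcal{B}_m(f_*)\|_{m+1,\Omega}$.

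\emph{First part.} Let $f_*\in A_m(\Omega,\R)^\perp\setminus\{0\}$ satisfy $\mathcal{S}_m(f_*)=f_*/C^{(m)}_B(\Omega)$. By Theorem \ref{minBoghigh} there is a pair $(v_*,p)=(\mathcal{B}_m(f_*),p_{f_*})\in H^{m+1}_0(\Omega,\R^n)\times X_m(\Omega,\R)$ solving \eqref{gstokesm} with datum $f_*$, i.e. satisfying \eqref{vecjepre34bogm}. Comparing \eqref{vecjepre34bogm} with \eqref{schurm0} gives $v_*=V_p$, hence $\mathcal{S}_m(p)=\nabla\cdot v_*=f_*=C^{(m)}_B(\Omega)\,\mathcal{S}_m(f_*)$. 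Positive definiteness of $\mathcal{S}_m$ (coercivity constant $1/C^{(m)}_B$, shown just before \eqref{spectrum1ho}) yields injectivity, so $p=C^{(m)}_B(\Omega)f_*$. Testing \eqref{vecjepre34bogm} with $v=v_*$ then gives
$$\|v_*\|_{m+1,\Omega}^2=(p,\nabla\cdot v_*)_{m,\Omega}=C^{(m)}_B(\Omega)\|f_*\|_{m,\Omega}^2,$$
which is \eqref{bogoattho}.

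\emph{Second part (existence).} I would invoke the higher-order analogue of the Crouzeix/Riedl compactness result, namely \cite[Theorem 2.1 and Part II]{riedl2013cosserat,riedl2010cosserat}, for boundaries of class $\mathcal{C}^{m+4}$: the operator
$$\mathcal{S}_m-\tfrac12 I_d:A_m(\Omega,\R)^\perp\to A_m(\Omega,\R)^\perp$$
is compact and self-adjoint. It is well defined on $A_m^\perp$ by \eqref{sondecompho}, and self-adjointness was shown above. Because \eqref{sondecompho} makes $\mathcal{S}_m$ the identity on $A_m$, the value $1/C^{(m)}_B$ of \eqref{spectrum1ho} must be realised on $A_m^\perp$ whenever $C^{(m)}_B>1$. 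Hence
$$\tfrac{1}{C^{(m)}_B(\Omega)}-\tfrac12\in\sigma\bigl((\mathcal{S}_m-\tfrac12 I_d)|_{A_m^\perp}\bigr).$$
When $C^{(m)}_B>2$ this point is nonzero, so compactness makes it an eigenvalue. Its eigenfunction $f_*\in A_m^\perp$ satisfies $\mathcal{S}_m f_*=f_*/C^{(m)}_B$, and the first part concludes.

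\emph{Main obstacle.} The crux is the compactness of $\mathcal{S}_m-\frac12 I_d$ on $A_m^\perp$ in the higher-order setting. It rests on a regularity and pseudodifferential analysis of the polyharmonic Stokes system up to the boundary, which is what forces the $\mathcal{C}^{m+4}$ assumption. I would not reprove this but cite Riedl's thesis. The one point to check there is that the essential spectrum of $\mathcal{S}_m$ on $A_m^\perp$ is still exactly $\{1/2\}$, as in the case $m=0$.
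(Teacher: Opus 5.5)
Your proposal is correct and takes the same route as the paper, which omits this proof as ``entirely similar'' to that of Theorem~\ref{bogoatttheo}. You identify the pressure with $C^{(m)}_B(\Omega)f_*$ via injectivity of $\mathcal{S}_m$, test the weak higher-order Stokes system with $\mathcal{B}_m(f_*)$, and get the eigenfunction from the cited compactness of $\mathcal{S}_m-\frac12 I_d$ on $A_m(\Omega,\R)^\perp$ (Riedl), with your reading of the norm in \eqref{bogoattho} as $\|\mathcal{B}_m(f_*)\|_{m+1,\Omega}$ being the right one.
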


\subsection{Higher-order Bogovskii constant of a ball}

In this section we prove
	
\begin{theorem}\label{finalresult}
	Let $m \in \mathbb{N}$ and $\B \subset\R^n$, $n \geq 2$, be the unit ball. Then,
	\begin{equation}\label{finalresultn}
		C^{(m)}_{B}(\B) = n \, .
	\end{equation}
\end{theorem}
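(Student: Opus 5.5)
The lower bound $C^{(m)}_B(\B)\ge n$ is already contained in Theorem~\ref{lowerboundHO}, so only the upper bound $C^{(m)}_B(\B)\le n$ needs proof. I would obtain it through the higher-order Schur complement $\mathcal S_m$. By the discussion preceding \eqref{spectrum1ho}, $1/C^{(m)}_B(\B)$ is the optimal coercivity constant of $\mathcal S_m$ on $X_m(\B,\R)$. By \eqref{sondecompho}, $\mathcal S_m$ is the identity on $A_m(\B,\R)$ and leaves $A_m(\B,\R)^\perp$ invariant. Since $\mathcal S_m$ is self-adjoint, the cross terms $(\mathcal S_m a,b)_{m,\B}$ with $a\in A_m$, $b\in A_m^\perp$ vanish. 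Hence it suffices to prove
\begin{equation*}
(\mathcal S_m q,q)_{m,\B}\ \ge\ \tfrac1n\,\|q\|_{m,\B}^2\qquad\forall q\in A_m(\B,\R)^\perp ,
\end{equation*}
that is, on the weakly $(m+1)$-polyharmonic functions in $X_m(\B,\R)$.

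\textbf{Step 1 (reduction by spherical harmonics).} Every $(m+1)$-polyharmonic function in $\B$ expands as
\begin{equation*}
\sum_{k\ge0}\sum_{j}\ \sum_{i=0}^{m} c_{k,j,i}\,|x|^{2i}H_{k,j}(x),
\end{equation*}
with $H_{k,j}$ the homogeneous harmonic polynomials from \eqref{basis}. The conditions $q\in H^m_0(\B)$ (vanishing of $q$ and its first $m-1$ radial derivatives on $\partial\B$) then cut each $(k,j)$-block down to a finite-dimensional space $\mathcal P_{k,j}$ of functions of the form $\pi(|x|^2)H_{k,j}(x)$, with $\pi$ a polynomial divisible by $(|x|^2-1)^m$. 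Because $\Delta$ and the scalar product \eqref{altranorma} are rotation invariant, these blocks are mutually orthogonal in $(\cdot,\cdot)_{m,\B}$ and are invariant under $\mathcal S_m$. (The constant block $k=0$ must be handled together with the zero-mean constraint.) The problem thus reduces to a finite-dimensional eigenvalue problem for each $k$, and by rotation invariance the result cannot depend on $j$.

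\textbf{Step 2 (explicit $V_q$).} For $q=\pi(|x|^2)H_k$ I would solve \eqref{schurm0} with the ansatz
\begin{equation*}
V_q=P(|x|^2)\,\nabla H_k+Q(|x|^2)\,x\,H_k ,
\end{equation*}
where $P$ and $Q$ are polynomials divisible by $(|x|^2-1)^{m+1}$, so that $V_q\in H^{m+1}_0$. This generalises \eqref{eq:comm_1.6}, which is the case $m=0$, $Q=0$. Using Euler's formula $x\cdot\nabla H_k=kH_k$ and the identities $\Delta\big(g(|x|^2)\nabla H_k\big)$, $\Delta\big(g(|x|^2)\,xH_k\big)$ expressed through ODE operators in $t=|x|^2$, the equation $(-\Delta)^{m+1}V_q=-\nabla\big((-\Delta)^m q\big)$ becomes a finite linear system for the coefficients of $P$ and $Q$. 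Then $\mathcal S_m q=\nabla\cdot V_q=\big(2tP'+(2t)Q'+(n+k)Q\big)H_k$, up to recomputing the exact coefficients, which gives the matrix of $\mathcal S_m$ on $\mathcal P_{k}$.

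\textbf{Step 3 (bound the eigenvalues).} I would then show that all eigenvalues of $\mathcal S_m|_{\mathcal P_k}$ lie in $[\tfrac1n,1]$, with $\tfrac1n$ attained only for $k=1$. Guided by the case $m=0$, where the eigenvalues are $1/\lambda_k=k/(2k+n-2)\ge 1/n$, I expect a monotone-in-$k$ formula. A cleaner route may exist: the top-degree (polyharmonic, non-$A_m$) part of each block might reduce to a single eigenvalue, and the task would then be to identify the one-dimensional invariant subspace and compute its Rayleigh quotient. For $k=1$ the explicit solution $u=(w,0,\dots,0)$ from the proof of Theorem~\ref{lowerboundHO}, with $w=c(1-|x|^2)^{m+1}$, should realise the ratio exactly $n$, which serves as a consistency check.

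\textbf{Main obstacle.} I expect the main difficulty to lie in Steps 2–3: handling the polyharmonic ODE system for general $m$ and $k$, and proving the uniform bound $\ge 1/n$ rather than merely computing it. If closed forms prove intractable, I would first try to exhibit, for each $k$, the pair $(V_q,q)$ in the Cosserat-type form $V_q=(|x|^2-1)^{m+1}\nabla H_k+\dots$ suggested by the $m=0$ case, and then compare Rayleigh quotients through integration by parts.
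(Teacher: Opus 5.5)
Your outline matches the paper's proof. You take the lower bound from Theorem~\ref{lowerboundHO}, read $1/C^{(m)}_B(\B)$ as the coercivity constant of $\mathcal S_m$, reduce to $A_m(\B,\R)^\perp$ via \eqref{sondecompho} and self-adjointness, and split $A_m(\B,\R)^\perp$ into spherical-harmonic blocks. The gap is that the proposal stops where the proof's content begins. Steps~2--3 announce an ansatz and an \emph{expected} monotone formula, so $(\mathcal S_m q,q)_{m,\B}\ge\frac1n\|q\|^2_{m,\B}$ is never established.

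Two observations close this. First, your own Step~1 already shows that each block is one-dimensional. The Almansi expansion gives $\deg\pi\le m$ in $t=|x|^2$, and divisibility by $(t-1)^m$ then forces $\pi=c\,(1-t)^m$. So, with $k=0$ excluded by the zero mean, $A_m(\B,\R)^\perp$ is spanned by $Q_{k,j}=(1-|x|^2)^m\widetilde H_{k,j}$, $k\ge1$, and you need one scalar per $k$, not a matrix.

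Second, in your ansatz the $xH_k$-part can be taken to be zero: $V_{k,j}=-\frac{(1-|x|^2)^{m+1}}{(2m+2)(n+2k-2)}\nabla\widetilde H_{k,j}$ works exactly. For $H\in\widetilde{\mathcal M}_k$ one has $\Delta(|x|^{2\ell}H)=2\ell(2\ell+n+2k-2)|x|^{2\ell-2}H$. Hence $\Delta^m(|x|^{2\ell}H)=0$ for $\ell<m$, and $\Delta^m(|x|^{2m}H)=\frac{(2m)!!\,(2m+n+2k-2)!!}{(n+2k-2)!!}H$. Apply this to $Q_{k,j}$, and (with $m+1$ and $k-1$ in place of $m$ and $k$) to the components $\partial_i\widetilde H_{k,j}\in\widetilde{\mathcal M}_{k-1}$ of $V_{k,j}$. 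This gives $(-\Delta)^{m+1}V_{k,j}+\nabla((-\Delta)^mQ_{k,j})=0$. Euler's formula gives $\nabla\cdot V_{k,j}=\frac{k}{n+2k-2}Q_{k,j}$; note that $\nabla\cdot(P(|x|^2)\nabla H_k)=2kP'H_k$, not $2tP'H_k$ as you wrote.

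Since $V_{k,j}\in H^{m+1}_0$ is smooth on $\overline{\B}$, integration by parts yields \eqref{schurm0}, and uniqueness gives $\mathcal S_mQ_{k,j}=\frac{k}{n+2k-2}Q_{k,j}$. This value is independent of $m$ and equals the Cosserat value $1/\lambda_k$. Finally, $\frac{k}{n+2k-2}-\frac1n=\frac{(n-2)(k-1)}{n(n+2k-2)}\ge0$, and expanding $q$ in this orthogonal system completes the bound.

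Two smaller points. Your expectation that $1/n$ is attained only for $k=1$ is false when $n=2$, where every block has eigenvalue $1/2$. Your completeness argument via the Almansi expansion differs from the paper's, which starts from the $m=0$ decomposition \eqref{decompl02} to write $q=\Delta\psi$ and then bootstraps boundary conditions to $\psi\in H^{m+2}_0(\B,\R)$. Your route is viable, but it still has to show that the spherical-harmonic projections preserve $H^m_0(\B,\R)$. It must also show that the resulting $(\cdot,\cdot)_{m,\B}$-orthogonal expansion actually converges to $q$.
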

\noindent
\begin{proof}
We first recall the notation of Section \ref{sec:4_1}: for every  $k\in \mathbb{N}$, $\widetilde{\mathcal M}_k$ denotes the space of
homogeneous harmonic polynomials of degree $k$ (with $\widetilde{\mathcal M}_0=\R$), having dimension $N(k,n)$ and
$$
\{\widetilde{H}_{k,1},\ldots,\widetilde{H}_{k,N(k,n)}\} \subset \widetilde{\mathcal M}_k \,
$$
as an orthonormal basis with respect to the $L^2(\B, \mathbb{R})$-scalar product. It is well-known that
$$
\bigcup_{k=1}^\infty \{\widetilde{H}_{k,1},\ldots,\widetilde{H}_{k,N(k,n)}\}
$$
forms a complete orthonormal system in $A(\B, \mathbb{R})^{\perp}$ with respect to the $L^2(\B, \mathbb{R})$-scalar product (recall \eqref{decompl02}), see \cite[Proposition 4]{gaultier1992quelques} and \cite[Theorem 5.5.III,  Proposition 5.5.7]{Sauvigny_PDE_1}. We next claim that
\begin{equation} \label{basisho}
\bigcup_{k=1}^\infty \{(1\!-\!|x|^{2})^{m} \widetilde{H}_{k,1},\ldots,(1\!-\!|x|^{2})^{m} \widetilde{H}_{k,N(k,n)}\}\mbox{ forms a complete orthogonal system in $A_{m}(\B, \mathbb{R})^{\perp}$.}
\end{equation}
This proof is lengthy. Since \eqref{basisho} is known to hold when $m=0$ (Proposition \ref{thm:comm_1.4}), we assume $m \geq 1$.
Fix any pair of integers $k \in \mathbb{N}$, $j \in \{1,\ldots,N(k,n)\}$, and put
$$
Q_{k,j}(x) \doteq (1 - |x|^{2})^{m} \widetilde{H}_{k,j}(x) = \sum_{\ell=0}^{m} {m \choose \ell} (-1)^{\ell} r^{2\ell} \widetilde{H}_{k,j}(x) \qquad \forall x \in \B \, ,
$$
with the radial variable $r \doteq |x|$ for every $x \in \mathbb{R}^{n}$. We clearly have $Q_{k,j} \in \mathcal{C}^{\infty}(\overline{\B}, \mathbb{R}) \cap H_{0}^{m}(\B, \mathbb{R})$. Furthermore, since $\widetilde{H}_{k,j} \in \widetilde{\mathcal M}_k$, from the Mean Value Property it holds
$$
0 = | \B | \widetilde{H}_{k,j}(0) =\int_{\B} \widetilde{H}_{k,j}(x) \, dx = \int_{0}^{1} \int_{\mathbb{S}^{n-1}} r^{n-1} \widetilde{H}_{k,j}(r \xi) \, d\xi \, dr = \dfrac{1}{n+k} \int_{\mathbb{S}^{n-1}} \widetilde{H}_{k,j}(\xi) \, d\xi \, ,
$$
that is,
\begin{equation} \label{zerointegral}
\int_{\mathbb{S}^{n-1}} \widetilde{H}_{k,j}(\xi) \, d\xi = 0 \, .
\end{equation}
Owing to \eqref{zerointegral}, we also infer
$$
\int_{\B} Q_{k,j}(x) \, dx = \left( \int_{0}^{1} r^{n+k-1} (1 - r^{2})^{m} \, dr \right) \left( \int_{\mathbb{S}^{n-1}} \widetilde{H}_{k,j}(\xi) \, d\xi \right) = 0 \, ,
$$
thereby implying that $Q_{k,j} \in L_{0}^{2}(\B, \mathbb{R})$ and, subsequently, that $Q_{k,j} \in X_{m}(\B, \mathbb{R})$.

Recall, for every integer $\ell \geq 1$, the identities
\begin{equation} \label{euler}
\begin{aligned}
\nabla r^{2\ell } = 2\ell  r^{2\ell -2} x \, , \quad \Delta r^{2\ell } = 2\ell  (2\ell  + n - 2) r^{2\ell-2} \, , \quad x \cdot \nabla \widetilde{H}_{k,j}(x) = k \widetilde{H}_{k,j}(x) \qquad \forall x \in \mathbb{R}^{n} \, .
\end{aligned}
\end{equation}
We then compute, for all $\ell \in \{1,\ldots,m\}$,
$$
\begin{aligned}
& \Delta (r^{2\ell} \widetilde{H}_{k,j}(x)) = \widetilde{H}_{k,j}(x) (\Delta r^{2\ell}) + 2 (\nabla r^{2\ell} \cdot \nabla \widetilde{H}_{k,j}(x)) = 2\ell (2\ell + n + 2k - 2)  r^{2\ell-2} \widetilde{H}_{k,j}(x) \, , \\[6pt]
& \Delta^{2} (r^{2\ell} \widetilde{H}_{k,j}(x)) = (2\ell)(2\ell-2)(2\ell + n + 2k - 2)(2\ell + n + 2k - 4) r^{2\ell-4} \widetilde{H}_{k,j}(x) \, , \\[6pt]
& \Delta^{3} (r^{2\ell} \widetilde{H}_{k,j}(x)) = (2\ell)(2\ell-2)(2\ell-4)(2\ell + n + 2k - 2)(2\ell + n + 2k - 4)(2\ell+n+2k-6) r^{2\ell-6} \widetilde{H}_{k,j}(x) \, .
\end{aligned}
$$
By induction we infer that
\begin{equation} \label{mlaplacianformulalem}
\Delta^{m} (r^{2\ell} \widetilde{H}_{k,j}(x)) \equiv 0 \quad \forall \ell \in \{1,\ldots,m-1\} \, , \quad
\Delta^{m} (r^{2m} \widetilde{H}_{k,j}(x)) = \dfrac{(2m)!! \, (2m+n+2k-2)!!}{(n+2k-2)!!} \widetilde{H}_{k,j}(x)
\end{equation}
for all $x \in \B$ and, consequently,
\begin{equation} \label{mlaplacianformulalem2}
\Delta^{m} Q_{k,j}(x) = (-1)^{m} \dfrac{(2m)!! \, (2m+n+2k-2)!!}{(n+2k-2)!!} \widetilde{H}_{k,j}(x)\, ,\qquad\Delta^{m+1} Q_{k,j}(x) = 0\, , \qquad \forall x \in \B \, .
\end{equation}
Here we use the notation for $\ell\in\N$:
$$
\ell!!=\begin{cases}
\Pi_{k=1}^{\ell/2}(2k),\quad \mbox{if $\ell$ is even,}\\[2mm]
\Pi_{k=1}^{(\ell+1)/2}(2k-1),\quad \mbox{if $\ell$ is odd.}
\end{cases}
$$
From \eqref{mlaplacianformulalem2}$_2$ we deduce that $Q_{k,j} \in A_{m}(\B, \mathbb{R})^{\perp}$ for every $k \in \mathbb{N}$ and $j \in \{1,\ldots,N(k,n)\}$.

We now proceed to show the orthogonality and completeness of the set \eqref{basisho} within $A_{m}(\B, \mathbb{R})^{\perp}$.

\underline{Orthogonality:} Take $(k,j) \in \mathbb{N} \times \{1,\ldots,N(k,n)\}$ and $(\ell,i) \in \mathbb{N} \times \{1,\ldots,N(\ell,n)\}$ such that $k \neq \ell$ or $j \neq i$, thereby implying
$$
0 = \int_{\B} \widetilde{H}_{k,j}(x) \widetilde{H}_{\ell,i}(x) \, dx = \int_{0}^{1} \int_{\mathbb{S}^{n-1}} r^{n-1} \widetilde{H}_{k,j}(r \xi) \widetilde{H}_{\ell,i}(r \xi) \, d\xi \, dr = \dfrac{1}{n + k + \ell} \int_{\mathbb{S}^{n-1}} \widetilde{H}_{k,j}(\xi) \widetilde{H}_{\ell,i}(\xi)  \, d\xi \, ,
$$
that is,
\begin{equation} \label{zerointegral1}
	\int_{\mathbb{S}^{n-1}} \widetilde{H}_{k,j}(\xi) \widetilde{H}_{\ell,i}(\xi) \, d\xi = 0 \, .
\end{equation}
Owing to \eqref{mlaplacianformulalem2}$_1$-\eqref{zerointegral1}, we can integrate by parts in $\B$ to infer
$$
\begin{aligned}
& (Q_{k,j}, Q_{\ell,i})_{m, \B} = (-1)^{m} \int_{\B} (\Delta^{m} Q_{k,j}(x)) Q_{\ell,i}(x) \, dx \\[6pt]
& = \dfrac{(2m)!! \, (2m+n+2k-2)!!}{(n+2k-2)!!} \int_{\B} (1 - |x|^{2})^{m} \widetilde{H}_{k,j}(x) \widetilde{H}_{\ell,i}(x) \, dx \\[6pt]
& = \dfrac{(2m)!! \, (2m+n+2k-2)!!}{(n+2k-2)!!} \left( \int_{0}^{1} r^{n+k+\ell-1} (1 - r^{2})^{m} \, dr \right) \left( \int_{\mathbb{S}^{n-1}} \widetilde{H}_{k,j}(\xi) \widetilde{H}_{\ell,i}(\xi) \, d\xi \right) = 0 \, .
\end{aligned}
$$

\underline{Completeness:} Take any $q \in A_{m}(\B, \mathbb{R})^{\perp}\subset X_m= H^m_0\cap L^2_0$ such that
\begin{equation} \label{zerointegral2}
	(q, Q_{k,j})_{m, \B} = 0 \qquad \forall k \in \mathbb{N} \, , \ \ \forall j \in \{1,\ldots,N(k,n)\} \, .
\end{equation}
We integrate by parts in $\B$ to infer
$$
	(q, Q_{k,j})_{m, \B} = (-1)^{m} \int_{\B} q(x) (\Delta^{m} Q_{k,j}(x)) \, dx = \dfrac{(2m)!! \, (2m+n+2k-2)!!}{(n+2k-2)!!} \int_{\B} q(x) \widetilde{H}_{k,j}(x) \, dx \, ,
$$
so that
\begin{equation} \label{zerointegral3}
	\int_{\B} q(x) \widetilde{H}_{k,j}(x) \, dx = 0 \qquad \forall k \in \mathbb{N} \, , \ \ \forall j \in \{1,\ldots,N(k,n)\} \, .
\end{equation}
Since $q \in L_{0}^{2}(\B, \mathbb{R})$, from the decomposition \eqref{decompl02} it follows the existence of a unique pair of functions $(\psi, \phi) \in H_{0}^{2}(\B, \mathbb{R}) \times A(\B, \mathbb{R})^{\perp}$ such that $q = \Delta \psi + \phi$ in $\B$. After noticing that
$$
\int_{\B} (\Delta \psi(x)) \widetilde{H}_{k,j}(x) \, dx = \int_{\B} \psi(x) \Delta \widetilde{H}_{k,j}(x)  \, dx = 0 \qquad \forall k \in \mathbb{N} \, , \ \ \forall j \in \{1,\ldots,N(k,n)\} \, ,
$$
we deduce from \eqref{zerointegral3} that
\begin{equation} \label{zerointegral4}
	\int_{\B} \phi(x) \widetilde{H}_{k,j}(x) \, dx = 0 \qquad \forall k \in \mathbb{N} \, , \ \ \forall j \in \{1,\ldots,N(k,n)\} \, .
\end{equation}
As $\phi \in A(\B, \mathbb{R})^{\perp}$, \eqref{zerointegral4} implies $\phi \equiv 0$, which leads to $q = \Delta \psi$ in $\B$. Since $q \in H_{0}^{m}(\B, \mathbb{R})$, by elliptic regularity, this ensures that
$\psi \in H^{m+2}(\B, \mathbb{R}) \cap H_{0}^{2}(\B)$. Letting $\Lambda_{n}$ denote the Laplace-Beltrami operator on the unit sphere $\mathbb{S}^{n-1}$ (which only involves tangential derivatives on $\partial \B$), it holds
$$
0 =q= \Delta \psi = \dfrac{\partial^{2} \psi}{\partial r^{2}} + (n-1) \dfrac{\partial \psi}{\partial r} + \Lambda_{n}(\psi) = \dfrac{\partial^{2} \psi}{\partial r^{2}} \quad \text{on} \ \ \partial \B \, ,
$$
implying $\psi \in H_{0}^{3}(\B, \mathbb{R})$. But since
$$
\Delta \left(\nabla \psi \right) = \nabla (\Delta \psi) = \nabla q\quad \text{in} \ \ \B \, ,
$$
we deduce again, provided that $m\ge 2$
$$
0 = \Delta \left(\nabla \psi  \right)  = \dfrac{\partial^{2} }{\partial r^{2}}\nabla \psi  + (n-1) \dfrac{\partial }{\partial r} \nabla \psi + \Lambda_{n}\left(\nabla \psi  \right) = \dfrac{\partial^{2} }{\partial r^{2}}\nabla \psi \quad \Longrightarrow\quad \dfrac{\partial^{3} \psi}{\partial r^{3}}=0  \quad \text{on} \ \ \partial \B \, ,
$$
that is, $\psi \in H_{0}^{4}(\B, \mathbb{R})$. Iterating this procedure, we infer that
$$
\dfrac{\partial^{\ell} \psi}{\partial r^{\ell}} = 0 \quad \text{on} \ \ \partial \B \, , \ \ \forall \ell \in \{0,\ldots,m+1\}
$$
or, equivalently, $\psi \in H_{0}^{m+2}(\B, \mathbb{R})$. Consequently, as $q$ equals the Laplacian of a function in $H_{0}^{m+2}(\B, \mathbb{R})$, we deduce $q \in A_{m}(\B, \mathbb{R}) \cap A_{m}(\B, \mathbb{R})^{\perp}$, that is, $q \equiv 0$. This concludes the proof of \eqref{basisho}.\par

\bigskip
We turn now to proving \eqref{finalresultn}.
As in the proof of \eqref{basisho}, fix any pair of integers $k \geq 1$ and $j \in \{1,\ldots,N(k,n)\}$. We put
$$
Q_{k,j}(x) \doteq (1 - |x|^{2})^{m} \widetilde{H}_{k,j}(x) = \sum_{\ell=0}^{m} {m \choose \ell} (-1)^{\ell} r^{2\ell} \widetilde{H}_{k,j}(x) \qquad \forall x \in \B \, ,
$$
with $r \doteq |x|$ for every $x \in \B$. Recall, from the proof of \eqref{basisho}, that $Q_{k,j} \in A_{m}(\B, \mathbb{R})^{\perp} \cap \mathcal{C}^{\infty}(\overline{\B}, \mathbb{R})$. Moreover, the identity \eqref{mlaplacianformulalem2}$_1$ implies
\begin{equation} \label{gradlap1}
\nabla ((- \Delta)^{m} Q_{k,j})(x) = \dfrac{(2m)!! \, (2m+n+2k-2)!!}{(n+2k-2)!!} \nabla \widetilde{H}_{k,j}(x) \qquad \forall x \in \B \, .
\end{equation}

Define now the vector field $V_{k,j} \in \mathcal{C}^{\infty}(\overline{\B}, \mathbb{R}^{n})$ by the formula
	$$
	V_{k,j}(x) \doteq -\dfrac{(1 - |x|^{2})^{m+1} }{(2m+2)(n+2k-2)} \nabla \widetilde{H}_{k,j}(x) \qquad \forall x \in \B \, ,
	$$
	so that $V_{k,j} \in H^{m+1}_{0}(\B, \mathbb{R}^{n})$. Recalling \eqref{euler}$_3$, a straightforward computation shows that
	\begin{equation} \label{binomial0}
		\begin{aligned}	
		& (\nabla \cdot V_{k,j})(x) = -\dfrac{1}{(2m+2)(n+2k-2)} \nabla ((1 - |x|^{2})^{m+1}) \cdot \nabla \widetilde{H}_{k,j}(x) \\[6pt]
		& = \dfrac{1}{n+2k-2} (1 - |x|^{2})^{m} (x \cdot \nabla \widetilde{H}_{k,j}(x)) = \dfrac{k}{n+2k-2} Q_{k,j}(x) \qquad \forall x \in \B \, .
		\end{aligned}
	\end{equation}
On the other hand, given any index $i \in \{1,\ldots,n\}$, we have
$$
V_{k,j}(x) \cdot \widehat{e}_{i} = -\dfrac{1}{(2m+2)(n+2k-2)} \sum_{\ell=0}^{m+1} {m+1 \choose \ell} (-1)^{\ell} r^{2\ell} \dfrac{\partial \widetilde{H}_{k,j}}{\partial x_{i}}(x) \qquad \forall x \in \B \, .
$$
Firstly, suppose that $k=1$, so that $\widetilde{H}_{1,j}$ is simply a linear function in $\mathbb{R}^{n}$ and, therefore, $\dfrac{\partial \widetilde{H}_{1,j}}{\partial x_{i}}$ is a constant function. Owing to \eqref{gradlap1}, it holds
\begin{equation} \label{binomial1}
\begin{aligned}
& (- \Delta)^{m+1} V_{1,j}(x) \cdot \widehat{e}_{i} = -\dfrac{1}{n(2m+2)} \dfrac{\partial \widetilde{H}_{1,j}}{\partial x_{i}}(x) (\Delta^{m+1} r^{2m+2}) \\[6pt]
& = -\dfrac{1}{n(2m+2)} \dfrac{(2m+2)!! \, (2m+n)!!}{(n-2)!!} \dfrac{\partial \widetilde{H}_{1,j}}{\partial x_{i}}(x) \\[6pt]
& = - \dfrac{(2m)!! \, (2m+n)!!}{n!!} \dfrac{\partial \widetilde{H}_{1,j}}{\partial x_{i}}(x)  = - \nabla ((- \Delta)^{m} Q_{1,j})(x) \cdot \widehat{e}_{i} \qquad \forall x \in \B \, .
\end{aligned}
\end{equation}
Secondly, if $k \geq 2$, it follows that $\tfrac{\partial \widetilde{H}_{k,j}}{\partial x_{i}} \in \widetilde{\mathcal M}_{k-1}$
and, hence, the computations that led to \eqref{mlaplacianformulalem2} yield
\begin{equation} \label{binomial2}
	\begin{aligned}
		& (- \Delta)^{m+1} V_{k,j}(x) \cdot \widehat{e}_{i} = -\dfrac{1}{(2m+2)(n+2k-2)} \dfrac{(2m+2)!! \, (2m+n+2k-2)!!}{(n+2k-4)!!} \dfrac{\partial \widetilde{H}_{k,j}}{\partial x_{i}}(x) \\[6pt]
		& = - \dfrac{(2m)!! \, (2m+n+2k-2)!!}{(n+2k-2)!!} \dfrac{\partial \widetilde{H}_{k,j}}{\partial x_{i}}(x) = - \nabla ((- \Delta)^{m} Q_{k,j})(x) \cdot \widehat{e}_{i} \qquad \forall x \in \B \, .
	\end{aligned}
\end{equation}
Summarising, from \eqref{binomial0}-\eqref{binomial1}-\eqref{binomial2} we deduce, for every $k \geq 1$ and $j \in \{1,\ldots,N(k,n)\}$,
\begin{equation} \label{binomial4}
(-\Delta)^{m+1} V_{k,j} + \nabla ((- \Delta)^{m} Q_{k,j}) = 0 \, , \quad \nabla \cdot V_{k,j} = \dfrac{k}{n+2k-2} Q_{k,j} \qquad \text{in} \ \ \B
\end{equation}
or, equivalently,
\begin{equation} \label{binomial5}
\mathcal{S}_{m}(Q_{k,j}) = \dfrac{k}{n+2k-2} Q_{k,j} \qquad \forall k \geq 1 \, , \ \ \forall j \in \{1,\ldots,N(k,n)\} \, .
\end{equation}

By \eqref{basisho}, for every $q \in A_{m}(\B, \mathbb{R})^{\perp} \setminus \{0\}$ we have
\begin{equation} \label{binomial7}
q = \sum_{k=1}^{+\infty} \sum_{j=1}^{N(k,n)} \dfrac{(q,Q_{k,j})_{m,\B}}{\| Q_{k,j} \|_{m,\B}^{2}} Q_{k,j} \quad \text{in} \ \ \B \, ; \qquad \| q \|_{m,\B}^{2} = \sum_{k=1}^{+\infty} \sum_{j=1}^{N(k,n)} \dfrac{(q,Q_{k,j})^{2}_{m,\B}}{\| Q_{k,j} \|_{m,\B}^{2}} \, .
\end{equation}
Identities \eqref{binomial5}-\eqref{binomial7}$_1$ then entail
$$
\mathcal{S}_{m}(q) = \sum_{k=1}^{+\infty} \sum_{j=1}^{N(k,n)} \dfrac{k}{n+2k-2} \dfrac{(q,Q_{k,j})_{m,\B}}{\| Q_{k,j} \|_{m,\B}^{2}} Q_{k,j} \, ,
$$
thereby yielding, as a consequence of \eqref{binomial7}$_2$,
\begin{equation} \label{binomial8}
(\mathcal{S}_{m}(q),q)_{m, \B} = \sum_{k=1}^{+\infty} \sum_{j=1}^{N(k,n)} \dfrac{k}{n+2k-2} \dfrac{(q,Q_{k,j})^{2}_{m,\B}}{\| Q_{k,j} \|_{m,\B}^{2}} \geq \dfrac{1}{n} \| q \|_{m,\B}^{2} \qquad \forall q \in A_{m}(\B, \mathbb{R})^{\perp} \setminus \{0\} \, ,
\end{equation}
which is also valid for $q \equiv 0$. Given any $p \in X_{m}(\B, \mathbb{R}) \setminus \{0\}$, it can be decomposed as $p = h + q$ in $\B$, for a unique pair $(h,q) \in A_{m}(\B, \mathbb{R}) \times A_{m}(\B, \mathbb{R})^{\perp}$. In the light of \eqref{sondecompho}-\eqref{binomial8}, we infer
$$
\dfrac{(\mathcal{S}_{m}(p),p)_{m, \B}}{\|p\|_{m,\B}^2} = \dfrac{\|h\|_{m,\B}^2 + (\mathcal{S}_{m}(q),q)_{m, \B}}{\|h\|_{m,\B}^2 + \|q\|_{m,\B}^2} \geq \dfrac{1}{n} \qquad \forall p \in X_{m}(\B, \mathbb{R}) \setminus \{0\} \, ,
$$
so that
$$
\dfrac{1}{C^{(m)}_{B}(\B)} = \inf_{p \in X_{m}(\B, \mathbb{R}) \setminus \{0\}} \dfrac{(\mathcal{S}_{m}(p),p)_{m, \B}}{\|p\|_{m,\Omega}^2} \geq \dfrac{1}{n} \, .
$$
Since we also have $C^{(m)}_{B}(\B) \geq n$ (see Theorem \ref{lowerboundHO}), the proof is concluded.
\end{proof}

\begin{appendix}
	
\section{Examples}\label{sec:examples}

\subsection{How to find $u_f$ for some $f\in L^2_0(\B,\R)$?}\label{howto}

We recall from the beginning of the proof of Proposition \ref{lem:comm_1.6} that
$$
\mbox{for all $f\in L^2_0 (\Omega,\R)$, a solution $u\in H^1_{0}(\Omega,\R^n)$ to \eqref{Bogdom} coincides with $u_f$ if and only if }
u\in H^1_{0,\sigma}(\Omega,\R^n)^\perp.
$$
Or in other words, in order to find $u_f$ one may first try to find some $u\in H^1_{0}(\Omega,\R^n)$ solving \eqref{Bogdom} and then to decompose it into
$$
u=u_0+u_f \quad \mbox{with}\quad  u_0\in H^1_{0,\sigma}(\Omega,\R^n) \quad \mbox{and}\quad  u_f \in H^1_{0,\sigma}(\Omega,\R^n)^\perp.
$$
This means that one has to find the orthogonal projection of $u$ onto $H^1_{0,\sigma}(\Omega,\R^n)^\perp$.
Projection operators are highly nonlocal and often act counterintuitively. The following examples are intended to give a feeling of how this projection operator acts.

Let us first consider the simplest nontrivial example $f\in L^2_0(\B,\R)$, $f(x)=x_i$ for any $i\in \{1,\ldots,n\}$. Then it is an obvious guess to consider $u(x)=(|x|^2-1)e_i $ as a solution to \eqref{Bogdom}. From Proposition~\ref{thm:comm_1.4} one takes then that one even has $u=u_f$ and that moreover, this pair $(f,u_f)$ maximises $C_B(\B)$.

Next we take $f(x)=|x|^2 x_1$ so that an obvious guess for a  solution $u\in H^1_0(\B,\R^n)$ to \eqref{Bogdom} is
$$
u(x)= \frac14 (|x|^4-1)e_1.
$$
In spite of its simple form it {\em does not} coincide with $u_f$, which is given in the following statement.
In particular, symmetry properties of $f$ are not fully inherited by the corresponding solution $u_f$.
\begin{proposition}
For $f(x)=|x|^2 x_1$ the privileged solution $u_f\in H^1_0(\B,\R^n)$ of \eqref{Bogdom} is given by
\begin{equation}\label{nablauff}
u_f(x)=
\frac{|x|^2-1}{n+4}\begin{pmatrix}
		\frac14(|x|^2+1)+x_1^2+\frac{n+1}{2}\\
		x_1x_2\\
		\vdots\\
		x_1 x_n
	\end{pmatrix}
\quad\mbox{and}\quad	
\int_\B |\nabla u_f|^2\, dx= \left(n-4\frac{n-1}{(n+4)^2}\right) \int_\B f^2\, dx.
\end{equation}
\end{proposition}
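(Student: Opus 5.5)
The plan is to verify the claimed formula directly, using the characterisation in Theorem \ref{minBog}: a solution $u\in H^1_0(\B,\R^n)$ of \eqref{Bogdom} equals $u_f$ if and only if there is $p\in L^2_0(\B,\R)$ with $-\Delta u+\nabla p=0$ in $\B$. So I would take the candidate $u$ in \eqref{nablauff} and check three things: $u=0$ on $\partial\B$, $\nabla\cdot u=f$, and $\Delta u$ is a gradient.

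The first point is immediate from the factor $|x|^2-1$. For the second, write $u=\frac{|x|^2-1}{n+4}\,w$ with $w_1=\tfrac14(|x|^2+1)+x_1^2+\tfrac{n+1}{2}$ and $w_j=x_1x_j$ for $j\ge2$. Then
$$\nabla\cdot u=\frac{1}{n+4}\Big(2x\cdot w+(|x|^2-1)\nabla\cdot w\Big).$$
Here $x\cdot w=x_1\big(\tfrac14(|x|^2+1)+|x|^2+\tfrac{n+1}{2}\big)$ and $\nabla\cdot w=\tfrac{x_1}{2}+2x_1+(n-1)x_1=(n+\tfrac32)x_1$. Collecting terms, the constant-times-$x_1$ contributions cancel and what remains is $|x|^2x_1$; I will check this arithmetic explicitly.

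For the third point, observe that each component of $u$ is a polynomial of degree at most $4$. Hence $\Delta u$ is a cubic polynomial field, and I would look for $p$ of the form $p=\alpha|x|^2x_1+\beta x_1$. Computing $\Delta u$ via $\Delta(\phi\psi)=\phi\Delta\psi+2\nabla\phi\cdot\nabla\psi+\psi\Delta\phi$ and matching coefficients with $\nabla p$ should give consistent $\alpha,\beta$; the constant $\tfrac{n+1}{2}$ in $u_1$ is exactly what is tuned to make the mixed terms match. Equivalently, one can show $\partial_i(\Delta u)_j=\partial_j(\Delta u)_i$. Since $p$ is odd in $x_1$, it has zero mean automatically, so $p\in L^2_0(\B,\R)$. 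This coefficient matching is the main computational obstacle. As a cross-check, one can split $u=\tfrac14(|x|^4-1)e_1-v$ with $v\in H^1_{0,\sigma}$ and verify that $v$ is divergence-free.

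Finally, for the energy identity I would use \eqref{ELeq}, or simply test the Stokes system with $u_f$: this gives $\int_\B|\nabla u_f|^2=\int_\B p\,f$. The right-hand side is a radial integral $\int_\B(\alpha|x|^4+\beta|x|^2)x_1^2\,dx$, which reduces via $\int_\B|x|^{2s}x_1^2=\frac{\omega_n}{n(n+2s+2)}$. Likewise $\int_\B f^2=\frac{\omega_n}{n(n+6)}$. The ratio should then simplify to $n-\frac{4(n-1)}{(n+4)^2}$. As a sanity check, this value must be at most $n=C_B(\B)$, which it is.
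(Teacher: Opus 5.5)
Your proposal is correct, and its core is the paper's proof: check $\nabla\cdot u=f$ and show that $\Delta u=\nabla\psi$ with $\psi(x)=|x|^2x_1+\frac{(n+2)(n-1)}{n+4}x_1$. Your ansatz therefore closes with $\alpha=1$, $\beta=\frac{(n+2)(n-1)}{n+4}$, and \eqref{ELeq} follows from one integration by parts.

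The genuine difference is the energy identity. The paper leaves it to ``direct computations'', i.e.\ integrating $|\nabla u_f|^2$ from the explicit gradient. You instead test the Stokes system with $u_f$ to get $\int_\B|\nabla u_f|^2\,dx=\int_\B\psi f\,dx$. This needs only the moments $\int_\B|x|^{2s}x_1^2\,dx=\frac{\omega_n}{n(n+2s+2)}$ for $s=1,2$, and gives
\[
\frac{\int_\B|\nabla u_f|^2\,dx}{\int_\B f^2\,dx}=1+\frac{(n+2)(n-1)(n+6)}{(n+4)^2}=n-\frac{4(n-1)}{(n+4)^2},
\]
using $(n+4)^2-4=(n+2)(n+6)$. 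This is shorter and less error-prone than expanding $|\nabla u_f|^2$. It also costs nothing extra, since $\psi$ is already determined in the minimality check.

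There are two slips in your commentary, neither of which affects the argument:
\begin{itemize}
\item $\Delta u$ is quadratic, not cubic; it is $p$ that is cubic.
\item The constant $\frac{n+1}{2}$ is not what makes $\Delta u$ a gradient. The term $\frac{n+1}{2}\cdot\frac{|x|^2-1}{n+4}$ in $u_1$ only adds a constant to $(\Delta u)_1$. That constant is fixed by the divergence condition, where it cancels the leftover multiples of $x_1$. Curl-freeness of $\Delta u$ is governed instead by the coefficients $\frac14,1,1$ of $|x|^2$, $x_1^2$, $x_1x_j$.
\end{itemize}
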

\noindent
\begin{proof} Regularity and attainment of the boundary conditions are obvious. The fact that $u_f$ solves \eqref{Bogdom} follows from
	\begin{align*}
		\nabla u_f =&\frac{1}{n+4}
		\begin{pmatrix}
			\left( |x|^2+2x_1^2+n+1\right) x^T + 2x_1 (|x|^2-1)e_1^T \\
			2x_1x_2 x^T +	x_2 (|x|^2-1)e_1^T + x_1 (|x|^2-1)e_2^T \\
			\vdots \\
			2x_1x_n x^T +	x_n (|x|^2-1)e_1^T + x_1 (|x|^2-1)e_n^T
		\end{pmatrix}.
	\end{align*}
	
	In order to verify 	the necessary and sufficient condition \eqref{ELeq} we calculate further
	\begin{align*}
		\Delta u_f(x) =&\frac{1}{n+4}
		\begin{pmatrix}
			n\left( |x|^2+2x_1^2+n+1\right)+2|x|^2+4x_1^2  + 2 (|x|^2-1) +4x_1^2\\
			2(n+2)x_1x_2  +	2x_1 x_2   +	2x_1 x_2 \\
			\vdots \\
			2(n+2)x_1x_n  +	2x_1 x_n  +	2x_1 x_n
		\end{pmatrix}\\
		=&\frac{1}{n+4}
		\begin{pmatrix}
			(n+4) |x|^2+  2(n+4) x_1^2+ n(n+1)-2\\
			2(n+4)x_1x_2 \\
			\vdots \\
			2(n+4)x_1x_n
		\end{pmatrix}\\
		=&
		\begin{pmatrix}
			|x|^2+  2x_1^2+ \frac{(n+2)(n-1)}{n+4}\\
			2x_1x_2 \\
			\vdots \\
			2x_1x_n
		\end{pmatrix}
		=\nabla  \left(  |x|^2x_1 +  \frac{(n+2)(n-1)}{n+4}x_1\right)^T
		=(\nabla \psi(x))^T,
	\end{align*}
	where $\psi(x)=|x|^2x_1+\frac{(n+2)(n-1)}{n+4}x_1$. With this we find
	$$
	\int_\B \nabla u_f : \nabla v\, dx=-\int_\B\Delta u_f \cdot v\, dx
	=-\int_\B\nabla \psi  \cdot v\, dx=\int_\B\psi \underbrace{(\nabla \cdot v)}_{\equiv 0}\, dx=0\qquad
	\forall v\in H^1_{0,\sigma}(\B,\R^n).
	$$
	In view of the necessary and sufficient condition \eqref{ELeq} we see that $u_f$ is indeed the ``privileged'' solution, as claimed.\par
	The proof of \eqref{nablauff}$_2$ is obtained by direct computations.\end{proof}

The idea to obtain $u_f$ in this example can be explained as follows. The necessary and sufficient condition \eqref{ELeq} requires
$\Delta u_f$  to be a gradient field with a potential, say $\psi\in H^2(\B,\R)$, see \eqref{gstokes} in Theorem~\ref{minBog}. If $(\psi-f)$ had
homogeneous boundary conditions (which is not the case) from $\Delta \psi =\Delta f$ one would have
$\psi=f$. So, a correction term in $\psi$ is needed and the main issue was to find the right factor in front of $x_1$.
In two dimensions this could be determined explicitly by means of a stream function-ansatz, to be added to $u$. In general dimensions a series of ``educated guesses'' eventually gave the right factor.

\subsection{On the failure of extension arguments}
\label{sec:nonmonotonicity}

In spite of the fact that \eqref{bogo} involves only first order derivatives, e.g. Lemma \ref{invariance} and \eqref{balln}, {combined e.g.\
with Theorem~\ref{thm:comm_4.1}} show that there is no monotonicity with respect to domain inclusions.
The reason for this is that arguing by trivial extension is not admissible for determining the Bogovskii constant \eqref{bogo}.
The following example shows how, for a pair $(f,u_f)$ maximising the Bogovskii constant, the properties being $u_f$ break down as well as
the maximality of $f$. Again, this is an effect of the nonlocality of the projection operator mentioned in the previous subsection.

Since the calculations are quite involved, we restrict ourselves to the planar case $n=2$. Again, according to Proposition
\ref{thm:comm_1.4} we consider the maximising pair $(f,u_f)$ in $\B=:\Omega_1$ given by $f(x)=x_1$ and
$$
u_f(x_1,x_2)= \frac12 (|x|^2-1)\begin{pmatrix}1\\0 \end{pmatrix},
\qquad \nabla u_f(x_1,x_2)= \begin{pmatrix} x_1&x_2\\ 0&0 \end{pmatrix},
$$
and extend $f$ trivially by $0$ to $\Omega_2=\B_2$. For
\begin{equation}\label{againf}
	f \in L^2_0(\B_2), \qquad  f(x)=\chi_\B (x) x_1\, ,
\end{equation}
we  claim that
\begin{equation}\label{ansatz}
	u_f(x)=\begin{pmatrix}\frac{1}{128} |x|^2 +\frac{1}{64}x_2^2 +\frac{3}{16}\\-\frac{1}{64} x_1 x_2\end{pmatrix}
	+\chi_{\B} (x) \begin{pmatrix}\frac{3}{8}|x|^2 -\frac{1}{4}x_2^2 -\frac{1}{2}\\ \frac{1}{4} x_1 x_2\end{pmatrix}
	+\chi_{\B_2\setminus \B} (x)\begin{pmatrix}-\frac14+\frac{1}{8|x|^2}-\frac{x_2^2}{4|x|^4}\\ \frac{x_1 x_2}{4 |x|^4}	\end{pmatrix}.
\end{equation}
To see this, we need to verify the three properties:
$$
u_f\in H^1_0(\B_2),\qquad \nabla \cdot u_f(x)=\chi_{\B} (x)x_2=f(x)\mbox{ a.e. in }\B_2,\qquad\mbox{\eqref{gstokes}}.
$$
The first two properties can be verified through direct computations:
$$
\nabla u_f(x)=\frac{1}{64}\begin{pmatrix} x_1 & 3x_2\\ -x_2 &-x_1 \end{pmatrix}
+\frac14\chi_{\B} (x) \begin{pmatrix} 3x_1 & x_2\\ x_2 & x_1 \end{pmatrix}
+\chi_{\B_2\setminus \B} (x)\begin{pmatrix}-\frac{x_1}{4|x|^4}+\frac{x_1x_2^2}{|x|^6} & -\frac{3x_2}{4|x|^4}+\frac{x_2^3}{|x|^6} \\
	\frac{x_2}{4|x|^4}-\frac{x_1^2x_2}{|x|^6}  & \frac{x_1}{4|x|^4}-\frac{x_1x_2^2}{|x|^6}\end{pmatrix}.
$$
To prove \eqref{gstokes} we introduce the function
$$
\tilde\psi (x)\doteq \chi_{\B_2\setminus \B} (x)\left(\frac{1}{8|x|^2}-\frac14\right) x_2 -\frac18 \chi_{\B} (x) |x|^2 x_2
$$
that satisfies $\tilde\psi\in W^{2,\infty}(\B_2)$ and
\begin{equation}\label{Deltapsi}
	\Delta \tilde\psi (x)= -\chi_{\B} (x) x_2\quad\mbox{a.e.\ in }\B_2.
\end{equation}
Since $\tilde\psi$ does not have double trace vanishing of $\partial\B_2$,
we add a biharmonic correction, that is, we take
$$
\psi(x)=\tilde\psi (x)+\frac{1}{128}|x|^2 x_2 +\frac{3}{16}x_2=
\frac{1}{128}|x|^2x_2+\frac{3}{16}x_2+\chi_{\B_2\setminus \B}(x)\left(\frac{1}{8|x|^2}-\frac14\right)x_2-\frac18 \chi_{\B}(x)|x|^2 x_2
$$
and we obtain $\psi\in H^2_0(\B_2)$. From \eqref{ansatz} we see that $u_f=u+\nabla^\perp\psi=u+(\psi_{x_2},-\psi_{x_1})$  and, hence, \eqref{gstokes} follows if $\psi$ is
a distributional   solution to
\begin{equation}\label{distsol}
	\Delta^2 \psi =\Delta \big(u_{2,x_1}-u_{1,x_2}\big)=-\Delta u_{1,x_2}\qquad\mbox{in }\mathcal{D}'(\B_2).
\end{equation}
So, let us check the validity of \eqref{distsol}. First, we notice that
$$
\langle\Delta u_{1,x_2},\varphi\rangle =\int_{\B_2}u_{1,x_2}\Delta\varphi=\int_{\B}x_2\Delta\varphi\qquad\forall\varphi\in C^\infty_c(\B_2)\, .
$$
Then we compute
$$
\langle \Delta^2\psi,\varphi\rangle =\langle\Delta^2\tilde\psi,\varphi\rangle=
\int_{\B_2}\Delta\tilde\psi\Delta\varphi\stackrel{\eqref{Deltapsi}}{=}-\int_\B x_2\Delta\varphi=-\langle\Delta u_{1,x_2},\varphi\rangle\qquad\forall\varphi\in C^\infty_c(\B_2)
$$
and, hence, \eqref{distsol} holds. This completes the proof of \eqref{ansatz}.

Finally, we  compute $\|f\|^2_{L^2(\B_2)}$ and $\| \nabla u_f \|_{L^2(\B_2)}^2$. By making use of symmetry arguments we obtain
\begin{align*}
	\int_{\B_2} |f|^2\, dx=&\frac{\pi}{4}\, ,\qquad\int_{\B_2} |\nabla u|^2\, dx=\frac{\pi}{2}=2\int_{\B_2} |f|^2\, dx,\, ,\\
	\int_{\B_2} |\nabla u_f|^2\, dx =&\frac{1}{64^2}\int_\B \left( (49^2+15^2)x_1^2 +(15^2+19^2)x_2^2\right)\, dx\\
	&+ \frac{1}{2048}\int_{\B_2\setminus \B} \frac{1}{|x|^{10}} \Big(|x|^{12}+4x_2^2|x|^{10}+32|x|^6(x_2^2-x_1^2)+256|x|^4\Big)\, dx\\
	=&\frac{803}{2048}\int_\B |x|^2\, dx+ \frac{1}{2048}\int_{\B_2\setminus \B}  \Big(3|x|^{2}+256|x|^{-6}\Big)\, dx=
\frac{17}{64}\pi=\frac{17}{16}\int_{\B_2} |f|^2\, dx.
\end{align*}
This ratio illustrates how far the trivially extended $u$ is from the minimising $u_f$ and how far the trivial extension of $f$ becomes apart from being the maximising one for $C_B(\B_2)=2$, see \eqref{balln}.
	
\end{appendix}

\bigskip

\noindent
{\bf Data availability statement.} Data sharing not applicable to this article as no datasets were generated or analyzed during the current study.
\par\smallskip
\noindent
{\bf Conflict of interest statement}.  The Authors declare that they have no conflict of interest.
\par\smallskip
\noindent
{\bf Acknowledgements.} The second  and third author are grateful to the first author for his hospitality and to
the Department of Mathematics of the Politecnico di Milano for supporting their visits  by means of  the MUR grant {\em Dipartimento di Eccellenza 2023-27} (Italy).
Filippo Gazzola is also partially supported by INdAM. The research of Gianmarco Sperone is supported by the \textit{Chilean National Agency for Research and Development} (ANID) through the \textit{Fondecyt Iniciación} grant 11250322.\par

\phantomsection
\addcontentsline{toc}{section}{References}
\bibliographystyle{abbrv}
\bibliography{references}

\vspace{5mm}
\noindent
\begin{minipage}{53mm}
{\footnotesize	\textbf{Filippo Gazzola}\\
	Dipartimento di Matematica\\
	Politecnico di Milano\\
	Piazza Leonardo da Vinci 32\\
	20133 Milan - Italy\\
	E-mail: filippo.gazzola@polimi.it}
	\end{minipage}
\begin{minipage}{60mm}
	{\footnotesize	\textbf{Hans-Christoph Grunau}\\
		Fakult\"at f\"ur Matematik\\
		Otto-von-Guericke-Universit\"at\\
		Postfach 4120\\
		39016 Magdeburg - Germany\\
		E-mail: hans-christoph.grunau@ovgu.de}
\end{minipage}
\begin{minipage}{57mm}
{\footnotesize	\textbf{Gianmarco Sperone}\\
	Facultad de Matemáticas\\
	Pontificia Universidad Católica de Chile\\
	Avenida Vicuña Mackenna 4860\\
	7820436 Santiago - Chile\\
	E-mail: gianmarco.sperone@uc.cl}
\end{minipage}
\end{document}